\documentclass {amsart}
\usepackage{amssymb,amsthm,color, amscd, mathtools}
\begin{document} 
\title{\bf Abelian $p$--groups and their endomorphism rings}

\author {Phill Schultz}
\address[phill.schultz@uwa.edu.au]{The University of Western Australia}

\subjclass[2010]{20K10,   20K30 } \keywords{$p$--groups }
\theoremstyle{plain}
\newtheorem{theorem}{Theorem}[section]

\newtheorem{corollary}[theorem]{Corollary}
\newtheorem{lemma}[theorem]{Lemma}
\newtheorem{proposition}[theorem]{Proposition}
\newtheorem{conjecture}[theorem]{Conjecture}
\newtheorem{hypothesis}[theorem]{Hypothesis}
\newtheorem{condition}[theorem]{Condition}
\newtheorem{fact}[theorem]{Fact}
\newtheorem{problem}[theorem]{Problem}

\theoremstyle{definition}
\newtheorem{definition}[theorem]{Definition}
\newtheorem{notation}[theorem]{Notation}

\theoremstyle{remark}
\newtheorem{remark} [theorem]{Remark}
\newtheorem{remarks}[theorem]{Remarks}
\newtheorem{example}[theorem]{Example}
\newtheorem{examples}[theorem]{Examples}

\renewcommand{\leq}{\leqslant}
\renewcommand{\geq}{\geqslant}
\newcommand{\Aut}{\mathop{\mathrm{Aut}}\nolimits}
\newcommand{\End}{\mathop{\mathrm{End}}\nolimits}
\newcommand{\Ker}{\mathop{\mathrm{Ker}}\nolimits}
\newcommand{\Hom}{\mathop{\mathrm{Hom}}\nolimits}

\newcommand\bbQ{{\mathbb{Q}}}
\newcommand\bbZ{{\mathbb{Z}}}
\newcommand\bbN{{\mathbb{N}}}
\newcommand\bbT{{\mathbb{T}}}
\newcommand{\bbP}{\mathbb{P}}
\newcommand{\bbS}{\mathbb{S}}
\newcommand{\bbSN}{\mathbb{S}\mathbb{N}}
\newcommand{\bbF}{\mathbb{F}}

\newcommand{\boldi}{\mathbf{\iota}}
\newcommand{\boldn}{\mathbf{n}}
\newcommand{\boldb}{\mathbf{b}}
\newcommand{\boldc}{\mathbf{c}}
\newcommand{\bolde}{\mathbf{e}}
\newcommand{\bolda}{\mathbf{a}}
\newcommand{\boldd}{\mathbf{d}}
\newcommand{\bi}{\mathbf{\iota}}
\newcommand{\boldm}{\mathbf{m}}
\newcommand{\boldt}{\mathbf{t}}
\newcommand{\boldu}{\mathbf{u}}
\newcommand{\boldk}{\mathbf{k}}
\newcommand{\boldell}{\mathbf{\ell}}
\newcommand{\boldsigma}{\mathbf{\sigma}}
\newcommand{\boldtau}{\mathbf{\tau}}

\newcommand{\calQ}{{\mathcal Q}}
\newcommand{\calP}{\mathcal{P}}
\newcommand{\calC}{\mathcal{C}}
\newcommand{\calD}{\mathcal{D}}
\newcommand{\calA}{\mathcal{A}}
\newcommand{\calH}{\mathcal{H}}
\newcommand{\calS}{\mathcal{S}}
\newcommand{\calB}{\mathcal{B}}
\newcommand{\calX}{\mathcal{X}}
\newcommand{\calY}{\mathcal{Y}}
\newcommand{\calL}{\mathcal{L}}
\newcommand{\calM}{\mathcal{M}}
\newcommand{\calG}{\mathcal{G}}
\newcommand{\calT}{\mathcal{T}}
\newcommand{\calI}{\mathcal{I}\kern-1pt\textit nd}
\newcommand{\calW}{\mathcal{W}}
\newcommand{\calJ}{\mathcal{J}}
\newcommand{\calE}{\mathcal{E}}
\newcommand{\calK}{\mathcal{K}}
\newcommand{\calF}{\mathcal{F}}
\newcommand{\calV}{\mathcal{V}}
\newcommand{\calU}{\mathcal{U}}
\newcommand{\calN}{\mathcal{N}}
\newcommand{\calZ}{\mathcal{Z}}
\newcommand{\calLN}{\mathcal{LN}}
\newcommand{\calR}{\mathcal{R}}
\newcommand{\calFI}{\mathcal{FI}}
\newcommand{\calId}{\mathcal{I}\kern-1pt\textit d}

 \newcommand{\up}{^{(\xi)}}
\newcommand{\ov}{\overline}

\newcommand{\rank}{\operatorname{rank}}
\newcommand{\order}{\operatorname{order}}
\renewcommand{\Im}{\operatorname{Im}}
\newcommand{\type}{\operatorname{type}}
\newcommand{\lcm}{\operatorname{lcm}}
\newcommand{\trace}{\operatorname{trace}}
\newcommand{\Rad}{\operatorname{Rad}}
\newcommand{\ord}{\operatorname{order}}
\newcommand{\Jon}{\operatorname{Jon}}
\newcommand{\Sym}{\operatorname{Sym}}
\newcommand{\Ext}{\operatorname{Ext}}
\newcommand{\ACD}{\operatorname{ACD}}
\newcommand{\im}{\operatorname{im}}
\newcommand{\height}{\operatorname{height}}
\newcommand{\stem}{\operatorname{stem}}
\newcommand{\gap}{\operatorname{gap}}
\newcommand{\expp}{\operatorname{exp}}
\newcommand{\h}{\operatorname{height}}
\newcommand{\Ann}{\operatorname{Ann}}
\newcommand{\length}{\operatorname{length}}
\newcommand{\branch}{\operatorname{branch}}
\newcommand{\grad}{\operatorname{grad}}
\newcommand{\lift}{\operatorname{lift}}
\newcommand{\coexp}{\operatorname{co--exp}}
\newcommand{\Ulm}{\operatorname{Ulm}}
\newcommand{\di}{\operatorname{\dagger-{\rm inv}}}
\newcommand{\ind}{\operatorname{ind}}
\newcommand{\Range}{\operatorname{Range}}
\newcommand{\rot}{\operatorname{root}}
\newcommand{\Mono}{\operatorname{Mono}}
\newcommand{\Iso}{\operatorname{Iso}}
\newcommand{\divi}{\operatorname{div}}
\newcommand{\GL}{\operatorname{GL}}\
\newcommand{\Mod}{\operatorname{Mod-}}
\newcommand{\depth}{\operatorname{depth}}
\newcommand{\tr}{\operatorname{trace}}
\newcommand{\lat}{\operatorname{lat}}

\newcommand{\la}{\langle} 
\newcommand{\ra}{\rangle}      

\newcommand{\Sub}{\texttt{Subgroups}}
\newcommand{\QSub}{\texttt{Quasi-Subgroups}}
\newcommand{\Bases}{\texttt{Bases}}
\newcommand{\bases}{\texttt{Bases}}
\newcommand{\QBases}{ \texttt{Quasi-Bases}}
\newcommand{\Subgroup}{\texttt{Subgroup}}

\newcommand{\st}[1]{\la{#1}\ra_*}

\begin{abstract}  The lattice of fully invariant subgroups of an abelian $p$--group and the lattice of ideals of its endomorphism ring are classified by   systems of cardinal invariants.
\end{abstract}
\noindent
 \maketitle

\section{Introduction}

Finite abelian groups were  the first algebraic structures to be studied from the abstract point of view.  Every such group is  a direct sum of finite $p$--groups for finitely many primes $p$; a finite $p$--group $G$ is a direct sum of   cyclic $p$--groups, say $m_n$ summands isomorphic to $\bbZ(p^n)$ for finitely many $n$, and the set of such pairs $(n,\,m_n)$ of positive integers is a complete isomorphism invariant for $G$.  
 
This result can be extended  a little further: let $G$ be a  direct sum of cyclic $p$--groups. Let  \[S=\{n\in \bbZ\colon \bbZ( p^n)\text{ is isomorphic to a cyclic summand of } G\}\]    and let $m_n$ be the number of summands of order $p^n$. Both $S$ and $m_n$   may be   infinite. Then the set of pairs $\{(n,\,m_n)\colon n\in S\}$ is a complete isomorphism invariant of $G$.

 Every abelian $p$--group  decomposes as a direct sum of a unique maximal divisible summand and  a reduced complement unique up to isomorphism. Divisible  $p$--groups have a satisfactory structure theorem: they are direct sums of quasi--cyclic groups $\bbZ(p^\infty)$ and the number of such summands  is an invariant. Thus the classification of torsion abelian groups   reduces to the problem of classifying unbounded reduced $p$--groups.

In the first  half of this paper, I review known results to show that every unbounded  reduced abelian $p$--group $G$ is determined up to isomorphism by an infinite ordinal $\lambda$ and for each limit ordinal $\xi<\lambda$, a  $\Sigma$--cyclic  group $B^{(\xi)}$ and a group $H^{(\xi)}$ which lies between $B^{(\xi)}$ and its torsion completion in the $p$--adic topology.. The  groups $B^{(\xi)}$ have complete systems of cardinal invariants, but the extensions $H^{(\xi)}$ remain mysterious unless $G$ is simply presented. 

The second half of the paper is a description  of the lattice of fully invariant  subgroups of an abelian $p$--group  and its application to the classification of the    lattice of ideals of the endomorphism ring of the group.  
 \section{ Notation} 
Except when explicitly stated otherwise, from now on the noun \lq group\rq\  signifies  reduced abelian $p$--group. Function names are written on the right of their arguments.
Otherwise, the notation  throughout this paper is standard, as found for example in \cite{Fuchs}.  
In particular,
 
\begin{itemize}\item  $\bbN$ denotes the poset of natural numbers including 0, $\bbN^+$ the positive integers, and for $n\in\bbN^+, \ [n]=\{1,\dots,s\}$;

\item For $n\in \bbN^+,\ \bbZ(p^n)$ is the cyclic $p$--group of order  $p^n$;

\item For a group $G$ and $a\in G$, the \textit{exponent  of $a$},\ $\exp(a)=\min\{k\in\bbN\colon p^ka=0\}$; $G$ is bounded
if $ \sup\{\exp(a)\colon a\in G\}$ is finite and if so, this supremum is denoted $\exp(G)$;

\item  $G[p^k]=\{a\in G\colon \exp(a)\leq k \}$; in particular, $G[p]$ is  the socle of $G$;
\item $p^kG[p]$ denotes $(p^kG)[p]$;
\item For a subgroup $H$ of $G$, denoted $H\leq G,\ \rank(H)$ is the cardinality of a maximum integrally independent subset of $H$;
\item $\calE=\calE(G)$ is  the endomorphism ring of $G$;  
 
\item $\bbF_p$ is the field of $p$ elements so $G[p]$ is a $\bbF_p$--space of dimension $\rank(G[p])$;
\item An ordinal $\mu$ is a successor if there exists an ordinal $\sigma$ such that $\mu=\sigma+1$ and a limit  otherwise; in particular 0 is a limit ordinal.
 
\end{itemize}
\section{Invariants of $G$}

The following results are independent of the choice of prime $p$.

\subsection{Basic subgroup} 
A \textit{basic group} $B$ is a  direct sum of cyclic groups,    together with a presentation of the form $B=\bigoplus_{i<s} B_i$ where \begin{enumerate}\item $s\in\bbN^+$ or $s=\omega$;
\item $B_i\cong (\bbZ(p^{n_i}))^{m_i},\ n_i\in\bbN^+$ and $m_i$ is a non--zero cardinal;
 \item $i<j<s$ implies $ {n_i} <n_j$. \end{enumerate} 
 
 We  denote $B$ by $B(\boldn,\,\boldm)$. where $\boldn=(n_i\colon i<s)$ and $\boldm=(m_i\colon i<s)$.

 Let $G$ be a group. By  \cite[Chapter 5, \S 5]{Fuchs}, $G$ has  a  {basic subgroup} $B$  satisfying the following properties:
\begin{enumerate}  \item[B1] $B$ is a basic group, pure in $G$, and for all $i<\omega,\ \bigoplus_{j\leq i}B_j$ is a maximal pure $p^{n_i}$--bounded subgroup of $G$;

\item[B2]  For all $i<\omega,\ G=\bigoplus_{j\leq i}B_j\oplus\left(\bigoplus_{j>i}B_j +p^iG\right)$;
\item[B3]  $\rank(B_i)=m_i=\rank(B_i[p])$ so $\rank(B)=\sum_{i<\omega}m_i=\dim(B[p])$.
\item[B4]  $G/B$ is divisible;
\item[B5]  If $A$ is a pure subgroup of $G$ which is a direct sum of cyclic groups, then $A$  is a direct summand of some basic subgroup of $G$.
\item[B6]  For all $n\in\bbZ^+,\   \ p^{n}G[p]\cong B_{n+1}[p]\oplus p^{n+1}G[p]$. 
\item[B7]  All basic subgroups of $G$ are isomorphic.

\end{enumerate}

Property  B2  implies that $G$ is bounded if and only if $G=B$, in which case $G$ itself is  basic. More precisely, if $G$ has exponent $e$, then $p^nG=0$ for all $n\geq e$.  Consequently,  $G$ is unbounded  if and only if $B$ is unbounded.

Property  B4  ensures   that every $f\in\calE$ is determined by its value on $B$  i.e., if $f,\ g\in\calE$ satisfy  $g|_B=f|_B$ then $f=g$.  In particular, if $f$ is zero on $B$, then $f=0$.
Thus  by B1 and B4, $G$ is a pure extension of $B$ by $G/B$, both of which are classified by cardinal invariants. 

Property B6 ensures that if $A$ is a basis for the $\bbF$--space $p^kG[p]/p^{k+1}G[p]$, then there is an independent set  $\{b_a\colon a\in A\}$  in $B_{k}$ such that for all $a\in A,\ a=p^{k-1} b_a$. 

Property B7 ensures that  every basic subgroup of $G$ is  isomorphic to $B(\boldn,\,\boldm)$ for a unique pair $(\boldn,\,\boldm)$. To summarise:

 \begin{proposition}\label{basic} Let $G$ be a group.\begin{enumerate}\item  If $G$ is bounded, then there exists $k\in\bbN^+$ and   finite sequences $\boldn=(n_i\in \bbN^+\colon i\in[ k],\  n_i<n_{i+1})$ and $\boldm=(m_i\colon i\in[ k],\  m_i\text{ a cardinal})$ such that $G\cong B(\boldn,\,\boldm)$;  
 
  \item If   $G$ is unbounded and there are infinite sequences $\boldn=(n_i\colon i<\omega,\ n_i\in\bbN^+)$ and $\boldm=(m_i\colon i<\omega,\ m_i\text{ a cardinal })$ such that $G$ has basic subgroup $B(\boldn,\,\boldm)$. \qed  \end{enumerate}
  
  \end{proposition}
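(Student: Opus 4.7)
The plan is to read off both conclusions directly from the structural properties B1--B7 by grouping the cyclic summands of a basic subgroup $B$ according to their order, using the standard presentation $B=\bigoplus_{i<s}B_i$ with $B_i\cong(\bbZ(p^{n_i}))^{m_i}$ and $n_i<n_{i+1}$ guaranteed by the definition of a basic group together with B1.

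First I would dispose of the bounded case. If $\exp(G)=e$, then the pure subgroup $B$ has exponent at most $e$, so the strict monotonicity $n_i<n_{i+1}$ forces $s$ to be finite, say $s=k$. Applying B2 with any index $i\geq k$ chosen so that also $p^iG=0$ yields
\[G=\bigoplus_{j\leq i}B_j\oplus\bigl(\bigoplus_{j>i}B_j+p^iG\bigr);\]
the tail summand vanishes because $p^iG=0$ and $B_j=0$ for $j>k$. Hence $G=B=B(\boldn,\boldm)$ for the finite sequences $\boldn,\boldm$ of length $k$ recording the distinct orders $p^{n_i}$ that occur and their multiplicities $m_i$.

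For the unbounded case, the remark following B2 already records that $G$ is bounded iff $B$ is bounded, so $B$ itself is unbounded. The strict monotonicity of $\boldn$ then forces $s=\omega$, and $B=B(\boldn,\boldm)$ with infinite sequences $\boldn=(n_i\colon i<\omega)$ and $\boldm=(m_i\colon i<\omega)$. By B7 the isomorphism type of $B$, and hence the pair $(\boldn,\boldm)$ up to its indexing conventions, is determined by $G$.

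The hard part, such as it is, lives entirely in the bounded case: one must verify that the tail summand in B2 collapses to zero, so that $G$ itself, not merely some pure subgroup of it, coincides with the direct sum of cyclics of the stated form. Everything else is bookkeeping, partitioning the summands of $B$ by common order and invoking B7 for uniqueness of $(\boldn,\boldm)$.
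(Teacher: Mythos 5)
Your argument is correct and follows essentially the same route as the paper, which treats the proposition as an immediate summary of properties B1--B7: the paper likewise deduces the bounded case from B2 (noting $p^nG=0$ for $n\geq\exp(G)$ forces $G=B$), the unbounded case from the equivalence of unboundedness of $G$ and of $B$, and uniqueness from B7. No gaps.
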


\subsection{Heights and Ulm invariants} 
The following properties, due to Ulm,  are found in  \cite[Chapters 10 and 11]{Fuchs}: 

 Let  $p^0G=G$ and for all ordinals $\kappa>0$,\[ p^\kappa G=\begin{cases} p(p^{\kappa-1}G)\text{ if $\kappa$ is a successor}\\
\bigcap_{\mu<\kappa}p^\mu G\text{ if $\kappa$ is a limit}\end{cases}\]

 Since $(p^\kappa G\colon \kappa\geq 0)$ is a  decreasing chain of subgroups, there  is a least ordinal $\lambda$, called  the \textit{length of $G$ } for which $p^\lambda G= p^{\lambda +1}G$. 
 If $G$ is reduced,   $p^\lambda G=0$. Otherwise $p^\lambda G$ is the maximal divisible subgroup of $G$.
 
 For each $a\in G$, the \textit{height of $a$}, $\height(a)$, is the least $\kappa$ for which $a \in p^\kappa G\setminus p^{\kappa+1}G$ if such $\kappa$ exists; otherwise  $a\in p^\lambda G$ and $\height(a)=\infty$. 

The subgroups $p^\kappa G,\ \kappa<\lambda$, have the following properties: 

\begin{enumerate}\item[U1]   $\left(p^\kappa G[p]\colon \kappa\leq\lambda\right)$ is a  decreasing  chain of $\bbF_p$--spaces which are   fully invariant subgroups of $G$;

\item[U2]   For all $\kappa < \lambda,\ V_\kappa= p^\kappa G[p]/p^{\kappa+1}G[p]$ is  an $\bbF_p$--space such that $p^\kappa G[p]\cong V_\kappa\oplus p^{\kappa+1}G[p]$.   Denote 
 the dimension $\dim(V_\kappa)$ by  $u_\kappa$. The    cardinals $u_\kappa$  are called  the \textit{Ulm invariants} of $G$. 
 
 \item[U3]   If $G$ and $H$ are countable groups with the same Ulm invariants, then $G\cong H$.
 
 \item[U4]   The property   B2 implies that the finite Ulm invariants $u_n,\ n<\omega$, of $G$ are also the   Ulm invariants of any basic subgroup of $G$;
 
 \item[U5]   Let $B=B(\boldn,\,\boldm)
 $ be a basic subgroup of $G$. Then for all $j<\omega,\ u_j=m_{j}$; in particular,   $B$, and hence $G$,  has a cyclic summand of exponent $i+1$ if and only if $u_{i}\ne 0$.
 \end{enumerate}
 \begin{notation}Let $G$ be a group of length $\lambda$.
 
 \begin{enumerate} \item Let $\Omega=(\kappa<\lambda\colon u_{\kappa}\ne 0)$, and let  $\boldu(G)=(u_\kappa\colon \kappa\in\Omega)$ be the sequence of non--zero Ulm invariants of $G$.

\item   Let $\boldu=(u_\kappa\colon \kappa\in\Omega)$ be a  sequence of non--zero cardinals. $\boldu$ is \textit{ $\lambda$--admissible} if there is a reduced $p$--group $G$ of length $\lambda$ such that $\boldu=\boldu(G)$.
  \item An abelian $p$--group  $G$   is \textit{simply presented} (\cite[Chapter 11, \S 3]{Fuchs}) if it has a presentation $G=\la X, R\ra$ where every relation in $R$ has the form $px=0$ or $px=y$. The simply presented groups have the following  properties:
 
 \begin{enumerate}\item Basic groups are simply presented;
 \item Countable simply presented groups are basic; 

\item A  simply presented group of limit length is the direct sum of simply presented groups of smaller length; in particular, a simply presented group of length $\omega$ is basic;
 \item The class of simply presented groups is closed under direct sums and summands;
 \item If $G$ is simply presented, then so are $p^\kappa G$ and $G/p^\kappa G$ for all ordinals $\kappa$;
 \item If $G$ and $H$ are simply presented groups with the same Ulm invariants, then $G\cong H$.  
 \end{enumerate}
  \end{enumerate}
 \end{notation}
 Necessary and sufficient conditions for admissibility of Ulm invariants are proved in  \cite[Chapter 11, Theorem 3.7]{Fuchs}.
 We quote them here for future reference.

  \begin{proposition}\label{fuchs} Let $\boldu=(u_\kappa\colon \kappa\in\Omega)$ be a  sequence of non--zero cardinals and let $\lambda=\sup\{\kappa\colon \kappa\in\Omega\}$. 
 
  \begin{enumerate}\item  If   $\boldu$ is $\lambda$--admissible, then
 for all $\rho<\lambda$ and   $\kappa$ with  $\kappa+\omega<\lambda$,\[ \sum_{\rho\geq\kappa+\omega}  u_{\rho} \leq \sum_{n<\omega}u_{\kappa+n} \tag  {*}\]

 \item Let   $\boldu=(u_\kappa\colon \kappa\in\Omega)$ satisfy the condition  $(*)$.  Then there exists a simply--presented group $G$  of length $\lambda$ such that for all  $\kappa<\lambda, \     u_\kappa$ is the $\kappa$--Ulm invariant of $G$, i.e. $\boldu$ is $\lambda$--admissible.
\qed\end{enumerate}\end{proposition}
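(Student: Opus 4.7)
For part (1), I would consider the quotient $Q=p^\kappa G/p^{\kappa+\omega}G$. Since $p^\omega(p^\kappa G)=p^{\kappa+\omega}G$, $Q$ is a reduced $p$--group of length at most $\omega$, hence $\Sigma$--cyclic by Pr\"ufer--Baer; its $n$th Ulm invariant ($n<\omega$) coincides with $u_{\kappa+n}$, so a basic subgroup $B'$ of $p^\kappa G$ has $\rank(B')=\sum_{n<\omega}u_{\kappa+n}$. The inequality $(*)$ then follows from the observation that each nonzero $a\in p^{\kappa+\omega}G[p]$ arises as the $p$--adic limit of a Cauchy sequence $(b_n)$ in $B'$ with $pb_n=a$ and $\height(b_n)\to\kappa+\omega$; distinct $a$ force essentially distinct limit data, and a bookkeeping argument bounds the cardinality of that data by $\rank(B')$.

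For part (2), I would build the simply presented group $G$ realising $\boldu$ by a tree construction in the spirit of Crawley--Hales. For each $\kappa\in\Omega$ introduce a set $X_\kappa$ of $u_\kappa$ generators destined to sit at height $\kappa$; for each $x\in X_\kappa$ add defining relations $x=ph_1,\ h_1=ph_2,\ldots$ descending through auxiliary generators whose heights converge to $0$ along (when $\kappa$ is a limit) an arbitrary cofinal $\omega$--sequence in $\kappa$, drawing the $h_n$ from the pools $X_{\kappa_n}$ of lower ordinals. The admissibility condition $(*)$ is exactly what enables, at each critical limit $\kappa+\omega$, the simultaneous allocation of descent--chains for \emph{all} generators of height $\geq\kappa+\omega$ into the countably indexed reservoir $\bigsqcup_{n<\omega}X_{\kappa+n}$. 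The relations used are only of the forms $px=0$ and $px=y$, so $G$ is simply presented, and uniqueness of simply presented groups by Ulm invariants (property (f) in the notation paragraph above) yields $\boldu(G)=\boldu$.

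The principal obstacle in both directions is the behaviour at limits of cofinality $\omega$: in (1) the straightforward dimension count $\dim p^\kappa G[p]=\dim p^{\kappa+\omega}G[p]+\dim(p^\kappa G[p]/p^{\kappa+\omega}G[p])$ yields only a trivial equality, so extracting a genuine inequality demands the Cauchy--sequence analysis indicated above; in (2) one must dispatch descent--chains for every generator of height $\geq\kappa+\omega$ from the single countably indexed reservoir below, which is precisely what $(*)$ makes possible. Both are handled by a transfinite induction on $\lambda$.
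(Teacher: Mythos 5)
First, note that the paper does not prove this proposition at all: it is quoted from Fuchs [Chapter~11, Theorem~3.7] ``for future reference'', with a \qed\ and no argument. So you are not paralleling an argument in the paper but attempting to reprove Fuchs's existence theorem, and your sketch breaks down at exactly the points where that theorem is hard. In part (1), the assertion that $Q=p^\kappa G/p^{\kappa+\omega}G$ is ``$\Sigma$--cyclic by Pr\"ufer--Baer'' is false: Pr\"ufer's theorem applies to countable groups, and a reduced group of length $\omega$ (a separable group) need not be a direct sum of cyclics --- the paper's own Section~4 is devoted to this failure (torsion completions $\ov B$, Corner's $2^{2^{\aleph_0}}$ non--isomorphic pure subgroups). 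More seriously, the step that actually carries the inequality --- ``a bookkeeping argument bounds the cardinality of that data by $\rank(B')$'' --- is the entire content of the necessity direction and is not supplied. The naive count of Cauchy sequences in $B'$ gives $\rank(B')^{\aleph_0}$, not $\rank(B')$, so ``distinct $a$ force essentially distinct limit data'' does not by itself yield $(*)$; one must manufacture, from an independent subset of $p^{\kappa+\omega}G[p]$, an independent subset of comparable cardinality inside the socle of a basic subgroup of $p^\kappa G$ (this is where the decompositions B2 and B6 do real work), and that construction is exactly what is missing.

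In part (2) the tree construction is the right idea --- it is essentially the Crawley--Hales construction of generalized Pr\"ufer groups --- but as written it is garbled and incomplete. If $x=ph_1=p^2h_2=\cdots$, then the heights of the $h_n$ must increase cofinally to $\kappa$, not ``converge to $0$'', and each $h_n$ needs its own subtree to acquire the intended height; the role of $(*)$ in making the allocation possible is only gestured at. The concluding appeal to uniqueness of simply presented groups is circular: uniqueness tells you that two simply presented groups with the same Ulm invariants are isomorphic, it does not compute the Ulm invariants of the group you have just presented. Verifying that the constructed group is reduced, has length exactly $\lambda$, and satisfies $u_\kappa=\dim\bigl(p^\kappa G[p]/p^{\kappa+1}G[p]\bigr)$ for every $\kappa$ --- no unintended elements of large height in the socle --- is the substantive half of the sufficiency proof and is absent. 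If you want this proposition available, cite Fuchs as the paper does; a self--contained proof would have to fill both of these holes.
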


  Condition $(*)$ is called the \textit{Ulm sum criterion}.  
  \subsection{The $p^\kappa$--adic topology \cite{J-L}}
  
  Let $G$ be a group of length $\lambda$ and let $\kappa\leq\lambda$ be a limit  ordinal. The subgroups $(p^\mu G\colon \mu\leq\kappa)$ form a basis of neighbourhoods of 0 for a topology on $G$, called the \textit{$p^\kappa$--adic topology} which make $G$ a topological group which is Hausdorff if and only if $\kappa=\lambda$. When $\kappa=\omega$, the topology is  simply called the \textit{$p$--adic topology}.
   Since endomorphisms of $G$ do not decrease height, every $f\in\End(G)$ is continuous in the $p^\kappa$--topology for all $\kappa\leq\lambda$.
  
  If $G$ is Hausdorff in the $p^\kappa$--topology, $G$ has a completion which in general is not a $p$--group, although its torsion subgroup is.  
  
  \section{Separable groups}  
  
   In this section, we describe the groups of length $\omega$,  i.e., unbounded groups with no elements of infinite height, called \textit{separable groups}.

 Let $G$ be a separable group with basic subgroup $B=\bigoplus_{i<\omega}B_i$. 
  Then the $p$--adic  completion of $B$ is a subgroup of $\prod_{i<\omega}B_i$ whose  torsion subgroup    $\ov B$    is a $p$--group consisting of   all bounded sequences in   $\prod_{i<\omega}B_i$,   called the \textit{torsion completion   of $B$}. If $|B|=\mu$, then $|\ov B|=2^\mu$ and $\ov B/B$ is divisible of rank $2^\mu$.

 Fuchs \cite[Chapter 10]{Fuchs} shows   that    the following properties are equivalent;. 
 \begin{enumerate} \item[S1] $G$ is separable;
 \item[S2]  every finite subset of $G$ is contained in a finite direct summand,  
 \item[S3] $B\leq G\leq \ov B$, $G$ is pure in $\ov B$ and $\ov B/G$ is divisible.
 \end{enumerate}
Consequently,  a separable   $G$ with basic subgroup $B$ is   simply--presented if and only if  $G=B$.
  The importance of separable groups lies in the fact that, as I   show in \S 5, they are the intrinsic building blocks of all reduced $p$--groups.

Throughout this section, unless noted otherwise, $G$ is a separable group with basic subgroup $B=\bigoplus_{i<\omega}B_i=B(\boldn.\boldm)$.

Countable separable groups are basic,   but    uncountable separable groups  are notoriously complicated; as Fuchs says,   \cite[page 316]{Fuchs},  no general structure theorem is available, and none is expected. 
In particular,    Corner \cite{Corn} shows that even in the simplest case when each $B_n\cong \bbZ(p^n)$ there exists a family of $2^{2^{\aleph_0}}$ non--isomorphic pure subgroups   of $\ov B$ containing $B$. Corner's proof \cite[Theorem 2.1]{Corn} is non--constructive in the sense that it shows that there are $2^{2^{\aleph_0}}$ non--isomorphic rings each of which is isomorphic to the endomorphism ring of some group $G$ with $B\leq G\leq \ov B$.   Hence there are as many non--isomorphic  groups $G$ in $\ov B$.
The result holds for any $|B|\leq 2^{\aleph_0}$ and   for larger $|B|$   Shelah has shown, \cite[p. 332]{Fuchs}, that for every infinite cardinal $\kappa$ there are $2^\kappa$ non--isomorphic separable groups   of cardinal $\kappa$ with the same basic subgroup.

 Although a full classification of  separable groups is out of reach, it is possible to classify an important subclass. 
 
  \begin{definition} Let $B$ be an unbounded basic group. A group $G$ satisfying $B\leq G\leq \ov B$ is an \textit{almost  complete extension of $B$} if there exists a decomposition $B=C\oplus D$ where  $C$ and $D$ are  unbounded, no cyclic summand of $C$ is isomorphic to a cyclic summand of $D$,
and $G=\ov C\oplus D$, where $\ov C$ is the torsion completion of $C$.\end{definition}

In the following lemma, $B=\bigoplus_{i\in\bbN}B_i$ and we represent elements of $\ov B$ as bounded sequences $\boldb=(b_i\colon i\in\bbN)\in\prod_{i\in\bbN}B_i$ where $b_i\in B_i$.

\begin{proposition}\label{alcom} Let $G=\ov C\oplus D$ and $G'=\ov{C'}\oplus D'$ be almost complete extensions of $B$. Then $G$ and $G'$ are pure subgroups of $\ov B$ and $G\cong G'$ if and only if $C=   C'$. 
\end{proposition}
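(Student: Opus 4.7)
The plan is to establish purity, dispatch the \lq\lq if\rq\rq\ direction by a partition argument, and then identify an isomorphism invariant of $G$ that captures $C$. One verifies $\ov B = \ov C \oplus \ov D$ by splitting each bounded sequence in $\prod_i B_i$ coordinatewise. Then $G = \ov C \oplus D \subseteq \ov C \oplus \ov D = \ov B$ is pure, because $\ov C$ is a summand of $\ov B$ and $D$ is pure in $\ov D$ by property B1; likewise for $G'$. For the \lq\lq if\rq\rq\ direction, the requirement that no cyclic summand of $C$ be isomorphic to one of $D$ forces each $B_k$ in $B = \bigoplus_{k \in I_B} B_k$ to lie entirely inside $C$ or entirely inside $D$; thus $C = \bigoplus_{k \in I_C} B_k$ is determined by the subset $I_C \subseteq I_B$, and $C = C'$ as subgroups of $B$ is equivalent to $I_C = I_{C'}$, whence $D = D'$ and $G = G'$ as subgroups of $\ov B$.

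\textbf{The \lq\lq only if\rq\rq\ direction.} I would show that $\ov C$ is, up to isomorphism, the maximum among the unbounded torsion-complete direct summands of $G$, and use this as the invariant. Let $H$ be a torsion-complete direct summand of $G = \ov C \oplus D$ and consider the composite $\pi_D \colon H \hookrightarrow G \twoheadrightarrow D$. Because $H$ is $p$-adically complete while $D$ is $\Sigma$-cyclic, $\pi_D(H)$ must be bounded: otherwise one picks $h_n \in H$ with $e_n := \exp(\pi_D(h_n))$ strictly increasing, forms the Cauchy series $\sum p^{e_n - 1} h_n$ which converges in $H$ by completeness, and observes that its image $\sum p^{e_n - 1} \pi_D(h_n)$ consists of socle-level elements in infinitely many distinct Ulm layers of $D$, giving infinite support in the direct sum $D[p] = \bigoplus_{k \in I_D} p^{k-1} D_k$, contradicting that $\pi_D(h) \in D$. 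Therefore $H \leq \ov C \oplus K_D$ for some bounded summand $K_D$ of $D$, and $H$ is a direct summand of $\ov C \oplus K_D$; comparing basic subgroups then shows that the unbounded part of the basic subgroup of $H$ is a direct summand of $C$. Taking $H = \ov C$ realizes $C$, so the maximum is $\ov C$ up to isomorphism. An isomorphism $G \cong G'$ now yields $\ov C \cong \ov{C'}$, hence $C \cong C'$ by the uniqueness of the basic subgroup of a torsion-complete group, and the partition analysis above forces $C = C'$.

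\textbf{Main obstacle.} The technical heart is the Cauchy-series argument for boundedness of $\pi_D(H)$, which pins any torsion-complete summand of $G$ inside $\ov C$ up to a bounded perturbation in $D$. The subsequent comparison of basic subgroups inside $\ov C \oplus K_D$ reduces to the classification of $\Sigma$-cyclic groups and the isomorphism invariance of the basic subgroup of a torsion-complete group.
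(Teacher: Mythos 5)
Your purity argument and the \lq\lq if\rq\rq\ direction are essentially sound (one caveat: $C=C'$ does not force $D=D'$, since complements of $C$ in $B$ are not unique; but $\ov C\oplus D=\ov C+B$ depends only on $C$, so $G=G'$ regardless). The \lq\lq only if\rq\rq\ direction takes a genuinely different route from the paper, which argues that an isomorphism preserves exponents and heights and is $p$--adically continuous and hence carries $\ov C$ onto $\ov{C'}$; your route, via torsion--complete summands, has two real gaps. First, the Cauchy--series step: a torsion--complete group contains limits only of Cauchy sequences of \emph{bounded order} --- that is precisely what separates $\ov B$ from the full $p$--adic completion. Your terms $p^{e_n-1}h_n$ have socle--level images in $D$, but their $\ov C$--coordinates may have unbounded exponents, so $\sum_n p^{e_n-1}h_n$ need not converge in $H$. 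You also cannot first truncate to $H[p^k]$, because $\pi_D(H[p^k])\leq D[p^k]$ is automatically bounded. The fact you want --- that the image of a torsion--complete group inside a $\Sigma$--cyclic group is bounded --- is true, but the clean route is via the theorem that epimorphic images of torsion--complete groups are (divisible)\,$\oplus$\,(torsion--complete), together with Kulikov's theorem that subgroups of $\Sigma$--cyclic groups are $\Sigma$--cyclic.

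Second, and more fundamentally, $\ov C$ is \emph{not} the maximum among unbounded torsion--complete summands of $G$: for any nonzero bounded summand $K$ of $D$ one has $\ov{C\oplus K}=\ov C\oplus K$, an unbounded torsion--complete summand of $G$ strictly containing $\ov C$, and since $D$ is unbounded there is no maximal choice of $K$. Your own estimate $H\leq\ov C\oplus K_D$ shows that this invariant recovers $C$ only up to bounded summands of $D$, and that ambiguity is irremovable: if $B_j$ is a single homogeneous component of $D$ and we set $C'=C\oplus B_j$ with $D'$ the complementary part of $D$, then $(C',D')$ also satisfies the definition and $\ov{C'}\oplus D'=\ov C\oplus B_j\oplus D'=\ov C\oplus D=G$ \emph{literally}. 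Hence no isomorphism invariant of $G$ can pin down $C$ on the nose; the closing sentence \lq\lq the partition analysis above forces $C=C'$\rq\rq\ cannot be repaired, and indeed the conclusion $C=C'$ of the statement is itself too strong. What your method, correctly completed, does yield is that the two partitions of the index set agree outside a finite set --- which is also all that is needed for the $2^{\aleph_0}$ count in Corollary \ref{weaker}.
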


\begin{proof} Since $\ov G=\ov B=\ov G',\ G$ and $G'$ are subgroups of $\ov B$ containing $B$. Let $n\in\bbN$ and $\boldb\in\ov B$ such that $p^n\boldb\in G$. Then $p^n\boldb=p^n\boldc+p^n\boldd$ with $p^n\boldc\in\ov C$ and $p^n\boldd\in D$ Since both $\ov C$ and $D$ are pure in $\ov B$, there exist $\bolde\in \ov C$ and $f\in D$ such that $p^n\bolde=p^n\boldc$ and $p^nf=p^n\boldd$. Hence $G$ is  pure in $\ov B$ and similarly $G'$ is pure in $\ov B$.

 Let $\alpha\colon G\to G'$ be an isomorphism and let $B_i$ be a maximal homogeneous summand of $B$ contained in $C$ and $B_j$ a mavimal homogeneous summand of $B$ contained in $D$.  Since $\alpha$ preserves exponents and heights, $B_i\alpha$ is a maximal summand of $G'$ generated by elements of exponent $n_i$ and height 0. Hence $B_i\alpha=B_i$ and similarly for $B_j\alpha=B_j$. Thus $C'=C\alpha= C$ and $D'=D\alpha= D$. Since $\alpha$ is continuous in the $p$--adic topology, $\ov C\alpha=\ov {C'}$, so $  C=  C'$. 

The converse is clear, so we have the following classification:
\end{proof}

\begin{corollary}\label{weaker} Let $B=\bigoplus_{i\in\bbN}B_i$ be an unbounded basic group.  
  There exist $ {2^{\aleph_0} }$ non--isomorphic  almost complete extensions of $B$  parametrized by partitions of $\bbN$ into two infinite subsets.

\end{corollary}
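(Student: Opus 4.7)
The plan is to construct, for each subset $I\subseteq\bbN$ whose complement is also infinite, a specific almost complete extension $G_I$ of $B$, and then to invoke Proposition \ref{alcom} to conclude that distinct admissible $I$'s give pairwise non--isomorphic extensions.

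Fix the basic decomposition $B=\bigoplus_{i<\omega} B_i$ with $B_i\cong(\bbZ(p^{n_i}))^{m_i}$ and $n_i<n_{i+1}$. For an admissible $I$, set $C_I:=\bigoplus_{i\in I}B_i$ and $D_I:=\bigoplus_{j\in J}B_j$ where $J:=\bbN\setminus I$. Then $B=C_I\oplus D_I$ is a decomposition into unbounded summands (because the $n_i$ are strictly increasing and both $I$ and $J$ are infinite), and the cyclic summand orders $\{p^{n_i}:i\in I\}$ of $C_I$ are disjoint from those of $D_I$, so no cyclic summand of $C_I$ is isomorphic to one of $D_I$. Hence $G_I:=\ov{C_I}\oplus D_I$, naturally embedded in $\ov B$ via the inclusion $\prod_{i\in I}B_i\hookrightarrow\prod_{i<\omega}B_i$, is an almost complete extension of $B$ in the sense of the definition.

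By Proposition \ref{alcom}, $G_I\cong G_{I'}$ iff $C_I=C_{I'}$ as subgroups of $\ov B$. If $I\ne I'$ and $i$ lies in their symmetric difference, then $B_i\subseteq C_I$ while $B_i\cap C_{I'}=0$ (because $B_i\subseteq D_{I'}$), forcing $C_I\ne C_{I'}$. Thus distinct admissible $I$'s give pairwise non--isomorphic $G_I$. The admissible $I$'s are all subsets of $\bbN$ except the countably many finite or cofinite ones, so their number is $2^{\aleph_0}$. Whether one counts ordered or unordered partitions affects the correspondence only up to a factor of two and does not change the cardinality.

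No serious obstacle is expected: once Proposition \ref{alcom} is in hand, the argument is essentially a counting exercise. The only mildly delicate point is verifying that $G_I$ genuinely satisfies the almost--complete condition, which rests on the strict monotonicity $n_i<n_{i+1}$ of the basic decomposition, ensuring that both $C_I$ and $D_I$ are unbounded whenever both $I$ and $J$ are infinite.
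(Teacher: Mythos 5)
Your proof is correct and follows essentially the same route as the paper: use Proposition \ref{alcom} to see that distinct partitions of $\bbN$ into two infinite sets yield non--isomorphic extensions, then count that there are $2^{\aleph_0}$ such partitions after discarding the countably many finite or cofinite subsets. You are in fact slightly more careful than the paper in explicitly constructing $G_I=\ov{C_I}\oplus D_I$ and checking it satisfies the definition of an almost complete extension, which the paper leaves implicit.
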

\begin{proof} 
 Proposition \ref{alcom} shows that almost complete extensions of $B$ are pure subgroups of $\ov B$ and 
 that non-isomorphic almost complete extensions derive from unequal decompositions of $B$. Hence There is a 1--1 correspondence between almost complete extensions of $B$ and partitions of $\bbN$ into two infinite sets;

  $\bbN$ has $2^{\aleph_0}$ subsets of which $\aleph_0$ are finite, and $\aleph_0$ have a finite complement. Hence there remain  $2^{\aleph_0}$  which determine partitions of $\bbN$ into two infinite subsets.
 \end{proof}
. 
 \section{Ulm subgroups}

    Throughout this section,  $G$ is a reduced group of  infinite length $\lambda$ with basic subgroup $B$; $\boldu= (u_\kappa\colon \kappa\in\Omega)$ is the sequence of non--zero Ulm invariants of $G$, and 
  $\Xi$ is  the   sequence of  limit ordinals $0\leq \xi < \lambda$.
  
For all $\xi\in\Xi$, let $G^{(\xi)}$  denote $p^\xi G=\bigcap_{\mu<\xi} p^\mu G $. Let $B^{(\xi)}$ be a basic subgroup of $G^{(\xi)}$ so that   $G^{(0)}=G$ and $ B^{(0)}=B$. 
   The group   $G^{(\xi)}$ is called the \textit{$\xi$--Ulm subgroup}  of $G$, (\cite[p. 305]{Fuchs}), and   $B^{(\xi)}$ the \textit{$\xi$--derived basic subgroup} of $G$.  
   
   The sequence $(G^{(\xi)}\colon \xi\in\Xi)  $ is called  the \textit{Ulm subgroup sequence} and   $(B^{(\xi)}\colon \xi\in\Xi)$ the \textit{derived basic subgroup sequence} of $G$.
   
     \begin{lemma}\label{Xi}   Let  $\Omega,\ \Xi,\  (G^{(\xi)})$ and $(B^{(\xi)})$ be defined as above.
   \begin{enumerate}\item   $ \Xi\subseteq\Omega$, that is, for all $\xi\in\Xi,\ u_\xi\ne 0$;
   
   \item The  sequence  $(G^{(\xi)}\colon \xi\in\Xi)$ is strictly decreasing;

   \item If $\xi+\omega\leq\lambda$, then    $B^{(\xi)}$ is unbounded;

   \item If $\xi+\omega>\lambda$, then there exists $k\in\bbN$ such that $\lambda=\xi+k$ and $B^{(\xi)}$ has exponent $k$;
 \end{enumerate}
 \end{lemma}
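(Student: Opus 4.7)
The plan is to handle the four parts in the order (4), (3), (2), (1), from routine to subtle. The computational underpinning of (3) and (4) is the identity $p^\mu G^{(\xi)} = p^{\xi+\mu}G$, established by transfinite induction on $\mu$: the successor step is immediate from the recursive definition, and at a limit $\mu$ the ordinals $\{\xi+\nu : \nu<\mu\}$ are cofinal in $\xi+\mu$, so $\bigcap_{\nu<\mu} p^{\xi+\nu}G = p^{\xi+\mu}G$. Consequently the length of $G^{(\xi)}$ is the unique ordinal $\eta$ with $\xi+\eta = \lambda$.

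For (4), the hypotheses $\xi<\lambda<\xi+\omega$ force $\eta=k$ for some $k\in\bbN^+$, so $G^{(\xi)}$ is bounded of exponent $k$. By the remark following the basic-subgroup axioms (\emph{$G$ bounded iff $G=B$}), $B^{(\xi)} = G^{(\xi)}$ has exponent $k$. For (3), the hypothesis $\xi+\omega\leq\lambda$ gives $\eta\geq\omega$, so $G^{(\xi)}$ is unbounded, and the same equivalence (``$G$ unbounded iff $B$ unbounded'') yields $B^{(\xi)}$ unbounded.

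For (2), the definition of $\lambda$ as the least ordinal with $p^\lambda G = p^{\lambda+1}G$ forces $p^\kappa G \supsetneq p^{\kappa+1}G$ for every $\kappa<\lambda$. Given $\xi<\xi'$ in $\Xi$, we have $\xi+1\leq\xi'\leq\lambda$, so $p^\xi G \supsetneq p^{\xi+1}G \supseteq p^{\xi'}G$, which delivers the strict decrease.

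Part (1) is the main obstacle. The goal is to produce an element of $G$ of order $p$ whose height is exactly $\xi$. By (2) there exists $a\in p^\xi G\setminus p^{\xi+1}G$; if $\exp(a)=1$ we are done, but if $\exp(a)=n\geq 2$ the natural candidate $p^{n-1}a\in G[p]$ has height $\geq\xi+n-1>\xi$ and is swallowed by $p^{\xi+1}G$, so cannot serve directly. The plan is to exploit the limit structure of $\xi$: for each $\mu<\xi$ we have $a=p^\mu b_\mu$ for some $b_\mu\in G$, and we seek to combine the family $\{b_\mu\}$ into a socle element of height exactly $\xi$. Equivalently, one argues by contradiction, assuming $p^\xi G[p] \subseteq p^{\xi+1}G$ and inducting on exponent to derive $p^\xi G = p^{\xi+1}G$, which would contradict (2); the delicate point is that at each inductive step the ``error'' only lands back in $p^\xi G[p]$, and closing the loop requires genuine use of the cofinality of $\{\mu : \mu<\xi\}$ in $\xi$. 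This is where the real work lies.
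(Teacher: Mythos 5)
Your treatment of parts (2)--(4) is correct and in fact tighter than the paper's own argument: the identity $p^\mu G^{(\xi)}=p^{\xi+\mu}G$, proved by the cofinality observation at limit $\mu$, cleanly reduces (3) and (4) to the statement that the length of $G^{(\xi)}$ is the ordinal $\eta$ with $\xi+\eta=\lambda$, after which the equivalence ``bounded iff equal to its basic subgroup'' finishes both. (The paper instead proves (3) by a rather garbled height/exponent computation and essentially asserts (4).) Part (2) via minimality of $\lambda$ is exactly the paper's argument.

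The genuine gap is part (1), and your instinct that ``the real work lies'' there is vindicated in the strongest possible way: neither of your proposed strategies can be completed, because the claim $u_\xi\ne 0$ is false as stated. The paper's own proof only establishes $p^\xi G[p]\ne 0$ (from $G^{(\xi)}\ne 0$) and then silently upgrades this to $u_\xi=\dim\bigl(p^\xi G[p]/p^{\xi+1}G[p]\bigr)\ne 0$, a strictly stronger assertion. A counterexample: let $G=\langle c,\,b_1,b_2,\dots \mid p^2c=0,\ p^nb_n=c\ (n\ge 1)\rangle$. Then $G/\langle c\rangle\cong\bigoplus_n\bbZ(p^n)$ is separable and $p^\omega G=\langle c\rangle\cong\bbZ(p^2)$, so $p^{\omega+1}G=\langle pc\rangle\ne 0$ and $\lambda=\omega+2$, whence $\omega\in\Xi$; yet $p^\omega G[p]=\langle pc\rangle=p^{\omega+1}G[p]$, i.e.\ $u_\omega=0$. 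This is precisely the failure mode you flagged in your contradiction attempt: the socle of $p^\xi G$ can be entirely absorbed into $p^{\xi+1}G$ without forcing $p^\xi G=p^{\xi+1}G$, so the induction on exponent cannot be closed. What is true, and is all that the paper's one-line argument actually delivers, is that $p^\xi G[p]\ne 0$ for every $\xi<\lambda$ (equivalently, $u_\kappa\ne 0$ for some $\kappa$ with $\xi\le\kappa<\lambda$). You should therefore record part (1) as a defect of the statement itself rather than attempt to prove it, but you must not present your sketch as a proof.
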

 \begin{proof} (1) Let $\xi\in\Xi$. Since $\xi<\lambda,\ G^{(\xi)}\ne 0$, so  $p^\xi G[p]\ne 0$ and hence $\xi \in\Omega$.

(2)  By definition, $G^{(\xi+1)}$ is a proper subgroup of $G^{(\xi)}$ and $G^{(\xi+\omega)}$ is a proper subgroup of $G^{(\xi+n)}$ for all $n<\omega$ whenever these goups exist..

(3) Since   $ G^{(\xi+\omega)}=\bigcap_{\nu<\xi+\omega}p^\nu G,\  G^{(\xi+\omega)}\leq\bigcap_{n<\omega}p^n G^{(\xi)}$. Hence for each $0\ne a\in G^{(\xi)}$ there is a sequence $(n_i)\in\bbN$ and a sequence $(b_i) \in G^{(\xi)}$ such that $ a=p^{n_i}b_i$ where $b_i$ has height 0 so $\exp(a)\geq n_i$.  

(4) The fact that $\lambda=\xi+k,\ k\in\bbN^+$, implies that $B^{(\xi)}$ is has exponent $k$.
 \end{proof}

    The connection between Ulm subgroups and the Ulm invariants described in \S 3.2 is as follows:

 \begin{proposition}\label{fin} Let $\xi\in\Xi$ and $n\in\bbN$.  
  \begin{enumerate}\item  For all $a\in G^{(\xi)}[p],\ p^na\in G^{(\xi)}$   if and only if $\height(a)\geq\xi+n$;

\item If $\lambda=\xi+n$ then $B^{(\xi)}$ has exponent $n$;
 
   \item If $\xi+\omega<\lambda$, then 
    $B^{(\xi)}$ is unbounded and  isomorphic to a basic subgroup of $G^{(\xi)}/G^{(\xi+\omega)}$.
      
 \item For all $\xi\in\Xi,\ \rank(B^{(\xi)})\geq\sum_{\xi<\rho \in\Xi}\rank(B^{(\rho)})$.

  \end{enumerate}
  \end{proposition}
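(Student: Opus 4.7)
\emph{Plan for Proposition \ref{fin}.}
Parts (1) and (2) are essentially bookkeeping. I read (1) as the assertion that an element $a$ of the socle satisfies $a \in p^n G^{(\xi)}$ if and only if $\height(a) \geq \xi + n$. A routine induction on $n$ gives $p^n G^{(\xi)} = p^n(p^\xi G) = p^{\xi+n} G$, so the equivalence is just the defining property of height restricted to $G[p]$. Part (2) is the special case $k = n$ of Lemma \ref{Xi}(4).

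For part (3), the unboundedness of $B^{(\xi)}$ is Lemma \ref{Xi}(3). To identify $B^{(\xi)}$ with a basic subgroup of $G^{(\xi)}/G^{(\xi+\omega)}$, I would compose the inclusion $B^{(\xi)} \hookrightarrow G^{(\xi)}$ with the quotient map and verify three things. Injectivity: if $b \in B^{(\xi)} \cap G^{(\xi+\omega)}$ then $b \in \bigcap_{n<\omega} p^n G^{(\xi)}$, and purity of $B^{(\xi)}$ in $G^{(\xi)}$ lifts this to $b \in \bigcap_{n<\omega} p^n B^{(\xi)}$, which vanishes because the basic group $B^{(\xi)}$ has no nonzero element of infinite $p$-height. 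Purity of the image: if $b + G^{(\xi+\omega)} = p^n(g + G^{(\xi+\omega)})$ then $b - p^n g \in G^{(\xi+\omega)} \subseteq p^n G^{(\xi)}$, so $b \in p^n G^{(\xi)}$, and a second appeal to purity yields $b \in p^n B^{(\xi)}$. Divisibility of the outer quotient: further modding $G^{(\xi)}/G^{(\xi+\omega)}$ by the image of $B^{(\xi)}$ produces a quotient of $G^{(\xi)}/B^{(\xi)}$, which is divisible by B4.

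Part (4) is an application of the Ulm sum criterion. Since $p^n G^{(\rho)} = p^{\rho+n} G$ for any limit $\rho$ and finite $n$, one has $u_n(G^{(\rho)}) = u_{\rho+n}(G)$, and properties B3 and U5 applied to $G^{(\rho)}$ give $\rank(B^{(\rho)}) = \sum_{n<\omega} u_{\rho+n}$. Every ordinal $\kappa \geq \xi + \omega$ admits a unique expression $\kappa = \rho + n$ with $\rho$ a limit ordinal strictly greater than $\xi$ and $n < \omega$; consequently $\sum_{\xi < \rho \in \Xi} \rank(B^{(\rho)}) = \sum_{\kappa \geq \xi + \omega} u_\kappa$, and this is bounded by $\sum_{n<\omega} u_{\xi+n} = \rank(B^{(\xi)})$ by the Ulm sum criterion (*) of Proposition \ref{fuchs}.

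The main obstacle lies in part (3): one has to combine purity of $B^{(\xi)}$ in $G^{(\xi)}$ with the Hausdorff-ness of the basic group $B^{(\xi)}$ in its own $p$-adic topology in order to force $B^{(\xi)} \cap G^{(\xi+\omega)} = 0$, and then use purity a second time to transfer purity to the quotient. Once those two checks are in hand, the basic-subgroup status of the image is immediate from the criterion in B1 together with the divisibility of the outer quotient.
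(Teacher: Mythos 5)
Your proposal is correct and follows essentially the same route as the paper: part (1) by unwinding the definition of height (reading the statement, as the paper's own proof does, as $a\in p^nG^{(\xi)}$), part (2) from the definition of length, part (3) by composing the inclusion with the quotient map $G^{(\xi)}\to G^{(\xi)}/G^{(\xi+\omega)}$, and part (4) from the Ulm sum criterion via $\rank(B^{(\xi)})=\sum_{n<\omega}u_{\xi+n}$. The only difference is one of detail: the paper merely asserts that the quotient map carries $B^{(\xi)}$ isomorphically onto a basic subgroup, whereas you supply the verification (trivial intersection with $G^{(\xi+\omega)}$ via purity and Hausdorffness, purity of the image, divisibility of the outer quotient), which is a welcome elaboration rather than a divergence.
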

   \begin{proof} (1)   The following are equivalent:
   \begin{itemize}\item[(a)]  $p^na\in G^{(\xi)}$;
   \item[(b)]   there exists $b\in G^{(\xi)}$ of height $\xi$ such that $p^nb=a$;
   \item[(c)] $\height(a)\geq\xi+n$.
   \end{itemize}
   
   (2) By definition, $n$ is minimal such that $p^nG^{(\xi)}=0$.
 
  (3) Since $G^{(\xi+\omega)}\ne 0$, $G^{(\xi +n_i)}\ne 0$ for some sequence $\boldn=(n_i)$. Let   $m_i$   be the dimension of  $p^{n_i}G^{(\xi)}[p]/p^{n_{i+1}}G^{(\xi)}[p],\ \boldm=(m_i)$ so $B^{(\xi)}=B(\boldn,\boldm)$ is unbounded.
   
  Let $\eta\colon G\to G/G^{(\xi+\omega)}$ be the natural epimorphism. Then $\eta$ maps $B^{(\xi)}$ isomorphically onto a basic subgroup of   $G^{(\xi)}/G^{(\xi+\omega)}$.

 (4) By the Ulm sum criterion (Proposition \ref{fuchs}), for all $\xi\in \Xi,\ \rank(B^{(\xi)})=\sum_{n<\omega} u_{\xi+n}\geq\sum_{\rho>\xi}u_\rho=\sum_{\xi<\rho \in\Xi}\rank(B^{(\rho)})$.
  \end{proof}

\begin{corollary}\label{graded}   With the notation above, the  graded group $\bigoplus_{\xi\in\Xi}G^{(\xi)}/G^{(\xi+\omega)}$ is   separable. It has  a    basic subgroup $\bigoplus_{\xi\in\Xi}B^{(\xi)} $ which is a direct sum of unbounded basic groups satisfying for all $\xi\in\Xi,\ \rank(B^{(\xi)})\geq\sum_{\xi<\rho \in\Xi}\rank(B^{(\rho)})$.
\end{corollary}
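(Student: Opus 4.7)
The plan is to prove first that each quotient $G^{(\xi)}/G^{(\xi+\omega)}$ is separable of length $\omega$, then to pass to the direct sum, and finally to read off the rank inequality directly from Proposition \ref{fin}(4).

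Fix $\xi\in\Xi$ with $\xi+\omega\leq\lambda$. I would begin by computing the Ulm filtration of $G^{(\xi)}/G^{(\xi+\omega)}$: for each finite $n$, since $G^{(\xi+\omega)}\leq G^{(\xi+n)}$, the subgroup $p^n\bigl(G^{(\xi)}/G^{(\xi+\omega)}\bigr)$ equals $G^{(\xi+n)}/G^{(\xi+\omega)}$. Intersecting over $n<\omega$ and using that by definition the Ulm subgroup at the limit ordinal $\xi+\omega$ is $\bigcap_{n<\omega}G^{(\xi+n)}=G^{(\xi+\omega)}$, I obtain $p^\omega\bigl(G^{(\xi)}/G^{(\xi+\omega)}\bigr)=0$. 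Thus the quotient has length at most $\omega$, and since Proposition \ref{fin}(3) supplies the unbounded basic subgroup $B^{(\xi)}$, its length is exactly $\omega$ and the quotient is separable.

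Next I would observe that separability is preserved under direct sums: heights in $\bigoplus_{\xi\in\Xi}G^{(\xi)}/G^{(\xi+\omega)}$ are computed coordinatewise, so an element of infinite height would project to one of infinite height in some summand, contradicting separability there. The standard fact that a direct sum of basic subgroups is itself basic (purity, $\Sigma$-cyclicity, and divisibility of the quotient all lift componentwise, so properties B1--B4 hold) then identifies $\bigoplus_{\xi\in\Xi}B^{(\xi)}$ as a basic subgroup of the graded group. The unboundedness of each $B^{(\xi)}$ is Lemma \ref{Xi}(3), and the inequality $\rank(B^{(\xi)})\geq\sum_{\xi<\rho\in\Xi}\rank(B^{(\rho)})$ is verbatim Proposition \ref{fin}(4).

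The main obstacle is really the computation of $p^\omega\bigl(G^{(\xi)}/G^{(\xi+\omega)}\bigr)$: one must verify that the intersection defining the Ulm subgroup at a limit ordinal commutes with passage to the quotient modulo $G^{(\xi+\omega)}$. This works here precisely because $G^{(\xi+\omega)}$ sits inside every term of the intersection, so the correspondence theorem applied to subgroups containing $G^{(\xi+\omega)}$ carries the intersection cleanly through the quotient map. Once this is in hand, the remainder is a routine packaging of Lemma \ref{Xi} and Proposition \ref{fin}.
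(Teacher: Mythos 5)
Your proposal is correct and follows essentially the same route as the paper, which simply asserts that each summand $G^{(\xi)}/G^{(\xi+\omega)}$ is separable and then cites Proposition \ref{fin}(3),(4); you merely fill in the details the paper leaves implicit (the computation $p^\omega\bigl(G^{(\xi)}/G^{(\xi+\omega)}\bigr)=0$, closure of separability and of basic subgroups under direct sums). The only caveat, present in the statement itself rather than in your argument, is the exceptional top summand when $\lambda=\xi+k$ for finite $k$, where $B^{(\xi)}$ is bounded rather than unbounded.
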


\begin{proof}

 Lemma \ref{Xi} (2) implies that the decreasing sequence  $\left(G^{(\xi)}\right)$ determines the direct sum $\bigoplus_{\xi\in\Xi}G^{(\xi)}/G^{(\xi+\omega)}$ of which each summand is separable.

The rest follows from Proposition \ref{fin} (3) and (4). 
\end{proof}

Let $\lambda$ be an infinite ordinal and $\Xi$ the sequence of infinite ordinals $\xi<\lambda$. 
 A sequence $(B^{(\xi)}\colon  \xi\in\Xi)$ of basic groups is called \textit{admissible}  if for all $\xi\in\Xi,\ \rank(B^{(\xi)})\geq\sum_{\xi<\rho \in\Xi}\rank(B^{(\rho)})$. 
 
 \begin{proposition}\label{missible} Let $(B^{(\xi)}\colon  \xi\in\Xi)$ be an admissible sequence of basic groups.
Define a sequence $(\boldu_\kappa)$ of cardinals by $u_{\xi+k}=\rank(B^{({\xi})}_k)$ for all $\xi\in\Xi$ and all $k<\omega$. Then $\boldu$ is the $G$--admissible Ulm invariant for a simply presented group $G$ whose derived Ulm sequence is $(B^{(\xi)})$.
\end{proposition}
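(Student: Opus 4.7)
The plan is to reduce the claim to Proposition \ref{fuchs}(2). First I will verify that $\boldu$ as prescribed satisfies the Ulm sum criterion $(*)$, then apply Proposition \ref{fuchs}(2) to realise $\boldu$ as the Ulm invariants of a simply presented group $G$ of length $\lambda$, and finally match the derived basic subgroup sequence of $G$ with the prescribed family $(B^{(\xi)})$.

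For $(*)$, fix $\kappa$ with $\kappa+\omega<\lambda$ and write $\kappa = \xi + m$, where $\xi\in\Xi$ is the largest limit ordinal at most $\kappa$ and $m<\omega$; in particular $\kappa+\omega = \xi+\omega$. Regrouping the left--hand side of $(*)$ by limit blocks gives
\[
\sum_{\kappa+\omega\leq \rho<\lambda} u_\rho \;=\; \sum_{\tau\in\Xi,\,\tau>\xi}\, \sum_{k<\omega} u_{\tau+k} \;=\; \sum_{\tau\in\Xi,\,\tau>\xi} \rank(B^{(\tau)}),
\]
which by the admissibility hypothesis is bounded above by $\rank(B^{(\xi)}) = \sum_{k<\omega} u_{\xi+k}$. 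The right--hand side of $(*)$ equals $\sum_{n<\omega} u_{\kappa+n} = \sum_{k\geq m} u_{\xi+k}$, so it remains to identify this cofinite tail with the full sum as infinite cardinals. Since $\xi+\omega\leq\lambda$, Lemma \ref{Xi}(3) forces $B^{(\xi)}$ to be unbounded, so infinitely many $u_{\xi+k}$ are non--zero and removing the finitely many terms with $k<m$ does not decrease the infinite cardinal sum.

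Granting $(*)$, Proposition \ref{fuchs}(2) produces a simply presented $G$ of length $\lambda$ with $\boldu(G) = \boldu$. To match the derived basic sequence, I fix $\xi\in\Xi$: the quotient $G^{(\xi)}/G^{(\xi+\omega)}$ is simply presented of length $\omega$ by the closure properties of the simply presented class recorded in the preamble, hence $\Sigma$--cyclic and equal to its own basic subgroup. By Proposition \ref{fin}(3) it is isomorphic to a basic subgroup of $G^{(\xi)}$; its cyclic multiplicities are its Ulm invariants $(u_{\xi+k})_{k<\omega}$, which coincide by construction with those of $B^{(\xi)}$; uniqueness of the basic decomposition (Proposition \ref{basic}) then identifies the two groups, so the derived basic subgroup sequence of $G$ is $(B^{(\xi)})$ as required.

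The main obstacle will be the cardinal--arithmetic step in the verification of $(*)$: replacing the full sum $\sum_{k<\omega} u_{\xi+k}$ by its cofinite tail $\sum_{k\geq m} u_{\xi+k}$ can fail under a purely rank--based admissibility if some finite--index invariant $u_{\xi+j}$ with $j<m$ happens to dominate. The argument therefore leans essentially on the unboundedness of $B^{(\xi)}$ supplied by Lemma \ref{Xi}(3), and may in the end require one to read the admissibility hypothesis slightly more strongly than literally stated, so that the non--zero Ulm invariants are sufficiently spread across the block $[\xi,\xi+\omega)$ to sustain the tail inequality.
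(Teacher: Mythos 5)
Your overall route is the same as the paper's: the paper's entire proof consists of the assertion that the definition of an admissible sequence of basic groups yields the Ulm sum criterion, followed by the appeal to Proposition \ref{fuchs}(2). What you do beyond the paper is actually attempt the verification of $(*)$, and in doing so you put your finger on a real gap. The blockwise identity $\sum_{\kappa+\omega\leq\rho<\lambda}u_\rho=\sum_{\tau\in\Xi,\,\tau>\xi}\rank(B^{(\tau)})$ and the admissibility bound by $\rank(B^{(\xi)})=\sum_{k<\omega}u_{\xi+k}$ are fine, but your final step --- that the cofinite tail $\sum_{k\geq m}u_{\xi+k}$ equals the full sum because infinitely many terms are non--zero --- is false as cardinal arithmetic. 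Take $u_\xi=\aleph_1$ and $u_{\xi+k}=1$ for all $k\geq 1$: the full sum is $\aleph_1$, while every tail with $m\geq 1$ sums to $\aleph_0$. Unboundedness of $B^{(\xi)}$ only guarantees that the tails are infinite, not that they are stable.

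This is not a defect you could have repaired by more care, because the proposition as literally stated is false with the paper's rank--based definition of admissibility. With $\lambda=\omega\cdot 2$, let $B^{(0)}=(\bbZ(p))^{\aleph_1}\oplus\bigoplus_{k\geq 2}\bbZ(p^k)$ and let $B^{(\omega)}$ be any unbounded basic group of rank $\aleph_1$. Then $\rank(B^{(0)})=\aleph_1\geq\rank(B^{(\omega)})$, so the sequence is admissible, yet the prescribed $\boldu$ violates $(*)$ at $\kappa=1$, since $\sum_{\rho\geq\omega}u_\rho=\aleph_1>\aleph_0=\sum_{n<\omega}u_{1+n}$; by Proposition \ref{fuchs}(1) no reduced group of length $\omega\cdot 2$ realises these invariants, so no group has this derived basic sequence. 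The correct hypothesis (the one effectively used in \cite[Chapter 11, Theorem 3.7]{Fuchs}) replaces $\rank(B^{(\xi)})$ by the \emph{final rank} $\inf_{m<\omega}\sum_{k\geq m}\rank(B^{(\xi)}_k)$; under that reading your argument, including the concluding identification of the derived basic sequence via Proposition \ref{fin}(3) and the uniqueness of basic decompositions, does go through. Your closing caveat that the admissibility hypothesis must be read ``slightly more strongly than literally stated'' is exactly the right diagnosis; the flaw you flagged sits in the paper's own two--line proof as well.
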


\begin{proof} The definition of admissible sequence of basic groups shows that $\boldu$ satisfies the Ulm sum condition of Proposition \ref{fuchs} (1). Hence there exists a simply presented group $G$ whose Ulm sequence is $\boldu$ and  derived Ulm sequence is $(B^{(\xi)})$.
 \end{proof}

  Proposition \ref{fin} and Corollary \ref{graded} show that every admissible Ulm sequence determines an 
admissible sequence of basic groups.  The following theorem proves the converse.

\begin{remarks}\begin{enumerate}\item Theorem \ref{MAIN} is essentially a rewording of 
\cite[Chapter 10, Theorem 1.9]{Fuchs} where the rank condition of Corollary \ref{graded} is replaced by an equivalent cardinality condition.

\item Let $(B^{(\xi)}\colon \xi\in\Xi)$ be an admissible sequence of basic groups. For all $\xi\in\Xi$ let $r_\xi=\rank (B^{(\xi)})$. By Corollary \ref{graded}, for all $(\sum_{\xi<\rho}r_\rho\colon \xi\in\Xi )$ is a decreasing sequence of cardinals. Hence it must stabilise at some finite index, i.e., there exists $\mu\in\Xi$ such that $\mu$ is the $n$th element of $\Xi$ and $r_\mu=\sum_{\kappa<\rho}r_\rho$ for all $\kappa\geq\mu$. 
\end{enumerate}\end{remarks}

\begin{theorem}\label{MAIN} 

\begin{enumerate}\item If $G$ is a group of length $\lambda$ then its sequence of derived basic groups is admissible.

\item If  $\calB=(B^{(\xi)}\colon  \xi\in\Xi)$  us any admissible sequence of basic groups, then there exists a simply presented group $G$ whose  sequence of basic groups is $\calB$. For all $\xi\in\Xi,\ G^{(\xi)}/ G^{(\xi+\omega)}\cong B^{(\xi)}$.
 
\end{enumerate}
\end{theorem}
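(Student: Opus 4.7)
The argument naturally splits into an easy direction (1) and a constructive direction (2).

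For part (1), I would observe that this is essentially a restatement of Corollary \ref{graded}: Proposition \ref{fin}(4) gives exactly the rank inequality defining admissibility, so the sequence of derived basic groups of any group $G$ of length $\lambda$ is admissible by definition.

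For part (2), given the admissible sequence $\calB = (B^{(\xi)})$, I would define a sequence of cardinals $\boldu$ by $u_{\xi + k} := \rank(B^{(\xi)}_k)$ for $\xi \in \Xi$, $k < \omega$, and apply Proposition \ref{missible} to produce a simply presented group $G$ of length $\lambda$ with Ulm invariants $\boldu$. The required input to Proposition \ref{missible} is that $\boldu$ satisfies the Ulm sum criterion of Proposition \ref{fuchs}(1). At a limit $\xi \in \Xi$ this is immediate from admissibility: every ordinal $\rho$ in the range $[\xi + \omega, \lambda)$ has a unique expression $\rho = \sigma + n$ with $\sigma \in \Xi$, $\sigma > \xi$ and $n < \omega$, so
\[
\sum_{\rho \geq \xi + \omega} u_\rho \;=\; \sum_{\sigma > \xi,\ \sigma \in \Xi} \rank(B^{(\sigma)}) \;\leq\; \rank(B^{(\xi)}) \;=\; \sum_{n < \omega} u_{\xi + n}.
\]
For a non-limit $\kappa = \xi + m$ the inequality reduces to the limit case because $B^{(\xi)}$ is unbounded by Lemma \ref{Xi}(3), so $\sum_{n < \omega} u_{\xi + n}$ is an infinite cardinal unaffected, in the relevant cardinal-arithmetic sense, by deletion of the first $m$ terms.

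Having obtained $G$, I would identify its derived basic subgroups with $\calB$. Because simply presented groups are closed under $G \mapsto p^\xi G$ and $G \mapsto G/p^\xi G$, the quotient $G^{(\xi)}/G^{(\xi + \omega)}$ is simply presented of length at most $\omega$, hence basic. Its Ulm invariants are $(u_{\xi + k})_{k < \omega} = (\rank(B^{(\xi)}_k))_{k < \omega}$, so Proposition \ref{basic}(2) together with U5 gives $G^{(\xi)}/G^{(\xi + \omega)} \cong B^{(\xi)}$. Property U4 applied to $G^{(\xi)}$ then shows that any basic subgroup of $G^{(\xi)}$ has the same finite Ulm invariants as $B^{(\xi)}$, so the $\xi$-derived basic subgroup of $G$ is isomorphic to $B^{(\xi)}$, completing the identification.

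The main technical point is the verification of the Ulm sum criterion at non-limit indices $\kappa$; this is the one place where care with infinite cardinal arithmetic on countable index sets is needed. All other steps are formal: the existence of $G$ is Proposition \ref{missible}, and the identification of the Ulm subgroups proceeds entirely by the hereditary properties of simply presented groups recorded earlier and the uniqueness of basic groups with prescribed Ulm invariants.
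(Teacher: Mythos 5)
Your overall route is the same as the paper's: part (1) is read off from Proposition \ref{fin}(4) (equivalently Corollary \ref{graded}), and part (2) is obtained by feeding the cardinals $u_{\xi+k}=\rank(B^{(\xi)}_k)$ into Proposition \ref{missible} and then identifying the Ulm quotients. The paper's own proof is just these two citations; you go further by actually trying to verify the hypothesis of Proposition \ref{missible}, namely the Ulm sum criterion, and that is where the problem lies.

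The verification at non-limit indices is wrong. You claim that since $B^{(\xi)}$ is unbounded, $\sum_{n<\omega}u_{\xi+n}$ is an infinite cardinal unaffected by deleting the first $m$ terms. That is false for cardinal sums: take $u_{\xi}=\aleph_1$ and $u_{\xi+n}=1$ for $n\geq 1$; then $\sum_{n<\omega}u_{\xi+n}=\aleph_1$ but $\sum_{n\geq 1}u_{\xi+n}=\aleph_0$. Concretely, with $\lambda=\omega\cdot 2$, $B^{(0)}=\bbZ(p)^{(\aleph_1)}\oplus\bigoplus_{n\geq 2}\bbZ(p^n)$ and $B^{(\omega)}$ of rank $\aleph_1$, the sequence is admissible in the paper's sense ($\rank(B^{(0)})=\aleph_1\geq\aleph_1=\rank(B^{(\omega)})$), yet the Ulm sum criterion fails at $\kappa=1$ because $\sum_{\rho\geq 1+\omega}u_\rho=\aleph_1>\aleph_0=\sum_{n<\omega}u_{1+n}$; so no group of length $\omega\cdot 2$ realizes this data. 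The correct condition at $\kappa=\xi+m$ bounds $\sum_{\rho\geq\xi+\omega}u_\rho$ by the \emph{tail} sums $\sum_{n\geq m}u_{\xi+n}$, i.e.\ essentially by $\max(\aleph_0,\inf_m\sup_{n\geq m}u_{\xi+n})$, which is in general strictly smaller than $\rank(B^{(\xi)})$. To be fair to you, this gap is already present in the paper: the proof of Proposition \ref{missible} asserts without argument that admissibility of the basic sequence yields the full Ulm sum condition. You have honestly surfaced the weak point, but your cardinal-arithmetic patch does not close it; either the definition of admissibility must be strengthened to a tail condition on the ranks $\rank(B^{(\xi)}_k)$, or the claim must be restricted. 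The remaining identification of $G^{(\xi)}/G^{(\xi+\omega)}$ with $B^{(\xi)}$ via the hereditary properties of simply presented groups is fine.
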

\begin{proof} (1) is a consequence of 
 Proposition \ref{fin} (4).
 
 (2)  is a consequence of Corollary \ref{graded}  and Proposition \ref{missible}.  \end{proof}
 
 In the following Corollary,   \lq unique\rq\ means unique up to isomorphism 
  \begin{corollary}\label{theor} \begin{enumerate}\item Every group of length $\lambda$ determines a unique admissible sequence $\left(B^{(\xi)}\colon \xi\in\Xi\right)$ of basic groups.
  
\item  Every admissible sequence   of basic groups determines a unique simply presented group.

\item Let $G$ be a group of length $\lambda$.
   $G$ is determined uniquely by the sequence $\left(B^{(\xi)}\colon \xi\in\Xi\right)$ together with   for each $\xi\in\Xi$ a pure subgroup $H^{(\xi)}$ satisfying $B^{(\xi)}\leq H^{(\xi)}\leq\ov B^{(\xi)}$.
\qed\end{enumerate}
 
\end{corollary}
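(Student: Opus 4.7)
Parts (1) and (2) reduce to results already established in the paper. For (1), Theorem \ref{MAIN} (1) supplies admissibility of the derived basic sequence, and property B7 applied within each Ulm subgroup $G^{(\xi)}$ gives that $B^{(\xi)}$ is uniquely determined up to isomorphism. For (2), Theorem \ref{MAIN} (2) produces a simply presented $G$ realising any prescribed admissible sequence, while the isomorphism property of simply presented groups recorded in Notation 3.3 yields uniqueness, since the derived basic sequence determines the Ulm invariants through $u_{\xi+k}=\rank(B^{(\xi)}_k)$.

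For part (3), I would first exhibit a valid choice of the $H^{(\xi)}$. Setting $H^{(\xi)} := G^{(\xi)}/G^{(\xi+\omega)}$, Corollary \ref{graded} shows that $H^{(\xi)}$ is separable with basic subgroup $B^{(\xi)}$, and the equivalence S1 $\Leftrightarrow$ S3 then places $H^{(\xi)}$ as a pure subgroup of $\ov{B^{(\xi)}}$ containing $B^{(\xi)}$. This establishes that the stated data can always be extracted from $G$.

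The uniqueness claim will be proved by transfinite induction on the length $\lambda$. Suppose $G'$ is a second group of length $\lambda$ carrying the same data $(B^{(\xi)}, H^{(\xi)})$ up to isomorphism. I would build an isomorphism $\alpha\colon G\to G'$ stage by stage, respecting both Ulm filtrations. Between successive limit ordinals $\xi$ and $\xi+\omega$ in $\Xi$, I proceed as in Ulm's classical countable-length argument, lifting across each successor using equality of the finite Ulm invariants $u_{\xi+n}=\rank(B^{(\xi)}_n)$. At a limit stage $\xi\in\Xi$, the given isomorphism $H^{(\xi)}\cong H'^{(\xi)}$ identifies $G^{(\xi)}/G^{(\xi+\omega)}$ with $G'^{(\xi)}/G'^{(\xi+\omega)}$, and this identification, glued to the already-constructed isomorphism $G/G^{(\xi)}\cong G'/G'^{(\xi)}$, extends $\alpha$ across this $\omega$--slice via a back-and-forth argument.

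The main obstacle will be the gluing at each limit ordinal: one must match the isomorphism on the separable quotient $H^{(\xi)}\cong H'^{(\xi)}$ to the isomorphism already built on $G/G^{(\xi)}$ in such a way that heights, measured in the ambient groups $G$ and $G'$ and not merely in the quotients, are preserved. The necessity of recording the $H^{(\xi)}$ rather than only the $B^{(\xi)}$ is underscored by the Corner and Shelah results quoted in Section 4: for uncountable basic groups, the Ulm invariants alone leave a continuum of separable factors possible, so the extension data $B^{(\xi)}\le H^{(\xi)}\le \ov{B^{(\xi)}}$ is genuinely required to pin down $G$.
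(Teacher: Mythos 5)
Your treatment of parts (1) and (2) is correct and is exactly the route the paper intends: the paper offers no separate argument, treating the corollary as immediate from Theorem \ref{MAIN}, property B7, and the fact that simply presented groups with the same Ulm invariants are isomorphic.

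Part (3) is where the genuine gap lies. Your argument consists of (i) extracting the data $H^{(\xi)}=G^{(\xi)}/G^{(\xi+\omega)}$ from $G$ (this half is fine, via Corollary \ref{graded} and S1$\Leftrightarrow$S3), and (ii) a proposed transfinite back-and-forth showing that two groups carrying isomorphic data are isomorphic. But step (ii) is never actually performed: you name ``the gluing at each limit ordinal'' as the main obstacle and then stop. That obstacle is not a technical loose end, it is the entire content of the claim. Knowing each quotient $G^{(\xi)}/G^{(\xi+\omega)}$ up to isomorphism does not by itself determine $G$, because $G$ is an iterated extension of these separable layers and the extension data is not recorded anywhere in the tuple $\left(B^{(\xi)},H^{(\xi)}\right)$. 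Concretely, an isomorphism $H^{(\xi)}\to H'^{(\xi)}$ of quotients need not lift to a map preserving heights computed in $G$ and $G'$, and for uncountable layers there is no Ulm-type back-and-forth available to repair the choice (this is precisely why U3 is restricted to countable groups, and why the Corner/Shelah results you cite produce many non-isomorphic groups with identical invariants). So your proof of (3) as written reduces the statement to itself.

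For comparison: the paper states the corollary with no proof at all (a bare \qed), leaning on the remark that Theorem \ref{MAIN} is a rewording of Fuchs, Chapter 10, Theorem 1.9. Read charitably, part (3) is a statement about which data must be specified (the basic sequence plus, for each $\xi$, the actual pure subgroup $H^{(\xi)}$ realising the layer inside $\ov{B^{(\xi)}}$, together with how the layers are glued), not a theorem asserting that the isomorphism types of the $H^{(\xi)}$ alone classify $G$. If you want to prove the stronger reading you attempted, you would need either a countability or simple-presentation hypothesis, or you would need to enlarge the data to include the extension classes; as it stands your induction cannot be completed.
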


 \section{Indicators}The following definitions and properties are   found in \cite[Chapters 10 and 11]{Fuchs}.  
\begin{enumerate} 

\item An \textit{indicator}  is a   strictly  increasing  finite or countable sequence of ordinals followed by the symbol $\infty$, denoted  $\boldsigma=(\sigma_i\colon i\in[n],\infty)$ or $\boldsigma=(\sigma_i\colon i<\omega,\infty)$.   When $n=0$, the sequence is denoted $(\infty)$. An indicator $\boldsigma$ has \textit{  length $n$} in the former case,  and has \textit{finite entries} if  each $\sigma_i$ is finite.

\item The set $\calI$  of  indicators  is   ordered   pointwise:   $\boldsigma$ \textit{precedes} $\boldtau$, denoted $\boldsigma\preceq \boldtau$, if both have infinite length and $\sigma_i\leq\tau_i$ for all $i<\omega$, or $\boldsigma$ has length $\geq n,\ \boldtau$ has length $n$   and $\sigma_i\leq\tau_i$ for all $i\leq n$.

 In general $\calI$ has infinite descending chains, e.g., $(1,\infty )\succ (1,2 ,\infty)\succ \dots$ and infinite ascending chains, e.g., $(\kappa_1,\infty )\prec
(\kappa_2,\infty )\prec\dots$ where $(\kappa_i\colon i<\omega)$ is a countable increasing chain of ordinals.
\item  Let $\boldsigma$ be an indicator of length $\geq n$ and let $i<n$. We say that $\boldsigma$ \textit{has a gap at $i$} if $\sigma_i+1<\sigma_{i+1}$.

\item Let $G$ be a group. An indicator $\boldsigma\in\calI$ is \textit{$G$--admissible} if   for all $i<\omega$, if $\boldsigma$ has a gap at $i$ and $\sigma_i=\kappa$, then the   Ulm invariant  $u_\kappa\ne 0$.  Let $\calI(G)$ be the set of  $G$--admissible indicators.
\item  A set  $\calS$  of  indicators   satisfies \textit{Kaplansky's gap condition for $G$} if   $\calS\subseteq\calI(G)$.
\item If $G$ is bounded of exponent $n$ then $\calI(G)$ has a unique minimum, namely $(0,1,\dots,n-1,\infty)$.

\item Let $a\in G$ with $\exp(a)=n+1$. The \textit{indicator of $a$} is the $G$--admissible indicator $\ind(a)=(\sigma_i\colon i\in[n],\infty)$, where $\sigma_i=\height(p^ia)$, so $\sigma_0=  \height(a) $ and $\sigma_n=\height(p^na)$.

\item   Let $a\in G$ and $ f\in \calE$. Then $\ind(a)\preceq\ind(af)$.
\item $G$ is \textit{transitive} if for all $a,\,b\in G,\   \ind(a)\preceq\ind(b)$ implies there exists $f\in\calE$ such that $b=af$. 
Simply presented groups are transitive.
  \end{enumerate}
 \section{Fully invariant subgroups}
 
     Let $G$ be a group with endomorphism ring $\calE$.  A subgroup $H\leq G$ is \textit{fully invariant} if for all $f\in\calE,\ Hf\leq H$. Denote by $\calH=\calH(G)$ the set of fully invariant subgroups of $G$. For example, $0,\,G,\, p^kG$ and $ G[p^n]\in\calH$ for all $\kappa<\length(G)$ and $n<\exp(G)$. 

\begin{lemma}\label{fiL} Let $G$ be a group  with endomorphism ring $\calE$ and $\calH$ its set of fully invaraint subgroups.     Then $\calH$ is a complete lattice under inclusion.
\end{lemma}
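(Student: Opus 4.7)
The plan is to show that $\calH$ is closed under arbitrary intersections and arbitrary sums, which by a standard lattice-theoretic argument suffices for $\calH$ to be a complete lattice once one notes that $\{0\}$ and $G$ both belong to $\calH$ and serve as bottom and top elements.

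First I would observe that $0$ and $G$ are trivially fully invariant, so $\calH$ is non-empty and contains a minimum and a maximum element. Next, given any family $\{H_i\colon i\in I\}\subseteq \calH$, I would verify that $\bigcap_{i\in I}H_i\in\calH$: for any $a\in\bigcap_{i\in I}H_i$ and $f\in\calE$, full invariance of each $H_i$ gives $af\in H_i$ for every $i$, so $af\in\bigcap_{i\in I}H_i$.

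Dually, I would verify that $\sum_{i\in I}H_i\in\calH$: a typical element $a\in\sum_{i\in I}H_i$ is a finite sum $a=a_{i_1}+\dots+a_{i_n}$ with $a_{i_k}\in H_{i_k}$, and for $f\in\calE$, additivity of $f$ (written on the right) yields
\[af = a_{i_1}f+\dots+a_{i_n}f\in H_{i_1}+\dots+H_{i_n}\subseteq \sum_{i\in I}H_i.\]
Hence the sum is fully invariant.

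Finally, I would note that $\bigcap_{i\in I}H_i$ is clearly the greatest lower bound and $\sum_{i\in I}H_i$ the least upper bound in $\calH$ of the family $\{H_i\}$, since any fully invariant subgroup contained in (respectively containing) every $H_i$ is contained in (respectively contains) this intersection (respectively sum). There is no real obstacle here; the argument is entirely formal and relies only on the fact that endomorphisms are additive maps, so it goes through verbatim whether functions are written on the left or the right.
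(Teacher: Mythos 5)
Your proof is correct and follows essentially the same route as the paper: show $\calH$ is closed under arbitrary intersections and arbitrary (finitely supported) sums, which yields completeness. The only difference is that you also explicitly note $0$ and $G$ as bottom and top elements, a harmless addition the paper leaves implicit.
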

\begin{proof} Let $S\subseteq\calH$  and $f\in\calE$.  For all $a\in\bigcap S,\ af\in \bigcap S$. Hence $S\in\calH$.

Let $b\in \sum S$. Then there exists a finite set $S_i\colon i\in[n]$ such that $b=\sum b_i$ with $b_i\in S_i$. Hence $bf=\sum b_if\in \sum S$, so $\sum S\in\calH$. 
\end{proof}
     
 \begin{proposition}\label{lattice} Let $G$ be a group,    $\boldsigma=(\sigma_i,\infty)\in \calI(G)$,  and    $G(\sigma)=\{a\in G\colon \sigma \preceq\ind(a) \}$.
 
 \begin{enumerate}\item $G(\boldsigma)\in\calH(G)$;  
\item Under the pointwise ordering, $\calI(G)$ is a lattice;

\item Let $\tau$ be any finite segment of $\boldsigma$, followed by $\infty$. Then $\tau=\ind(a)$ for some $a\in G$;

\item  If   $\ind(a)=(\sigma_{\kappa_1},\dots,\sigma_{\kappa_n},\infty)$ then $\exp(a)=n+1$ and for all $i\in[0,n],\  p^ia\in p^{\kappa_i}G[p^{n-i+1}]$.
\item If $\boldsigma\preceq \tau\in \calI(G)$, then   $G(\tau)\leq G(\sigma)$.
\end{enumerate} \end{proposition}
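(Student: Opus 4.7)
My plan is to handle the five parts of Proposition \ref{lattice} mostly independently, in roughly inverse order of difficulty.

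Part (5) should be immediate from transitivity of $\preceq$: if $a \in G(\boldtau)$ then $\boldtau \preceq \ind(a)$, so $\boldsigma \preceq \ind(a)$, whence $a \in G(\boldsigma)$. Part (4) is a direct unfolding: an indicator with $n$ finite entries followed by $\infty$ forces $\exp(a) = n+1$; since $\height(p^i a) = \kappa_i$ and $p^{n-i+1}(p^i a) = p^{n+1}a = 0$, we obtain $p^i a \in p^{\kappa_i} G \cap G[p^{n-i+1}] = p^{\kappa_i} G[p^{n-i+1}]$.

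For part (1), I would first verify $G(\boldsigma)$ is a subgroup. For $a, b \in G(\boldsigma)$, the estimate $\exp(a+b) \leq \max(\exp(a), \exp(b))$ combined with $\boldsigma \preceq \ind(a)$ and $\boldsigma \preceq \ind(b)$ shows that the length of $\ind(a+b)$ does not exceed the length of $\boldsigma$; the inequality $\height(p^i(a+b)) \geq \min(\height(p^i a),\, \height(p^i b)) \geq \sigma_i$ then yields $\boldsigma \preceq \ind(a+b)$. Closure under negation is trivial since $\ind(-a) = \ind(a)$. Full invariance is then immediate from item~(8) of Section 6: for $f \in \calE$ and $a \in G(\boldsigma)$, $\ind(af) \succeq \ind(a) \succeq \boldsigma$, so $af \in G(\boldsigma)$.

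For part (3), given admissible $\boldsigma$ and a finite initial segment $\tau = (\sigma_0, \dots, \sigma_{k-1}, \infty)$, I plan to construct $a$ from the top down. First choose $b_{k-1} \in p^{\sigma_{k-1}}G[p] \setminus p^{\sigma_{k-1}+1}G[p]$; then recursively choose $b_i$ of height exactly $\sigma_i$ with $pb_i = b_{i+1}$. When $\sigma_{i+1} = \sigma_i + 1$ any preimage of $b_{i+1}$ in $p^{\sigma_i}G$ has the required height. When there is a gap $\sigma_i + 1 < \sigma_{i+1}$, take a preliminary preimage $c$ of $b_{i+1}$ of height $\sigma_{i+1}-1 > \sigma_i$, and use the admissibility hypothesis $u_{\sigma_i} \neq 0$ to pick $d \in p^{\sigma_i}G[p] \setminus p^{\sigma_i+1}G[p]$; then $b_i := c + d$ has height exactly $\sigma_i$ and still satisfies $pb_i = b_{i+1}$. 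Finally set $a = b_0$.

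Part (2) is the main obstacle. The pointwise minimum of two admissible indicators is admissible: strict increase is routine, and any gap at position $i$ in the minimum is witnessed by a gap at $i$, with the same value, in whichever original achieves the minimum there. This gives the meet. The pointwise maximum, however, need not be admissible --- for example, if $u_5 = 0$ while $u_0, u_2 \neq 0$, then $(0,5,6,\infty)$ and $(0,2,7,\infty)$ are each admissible but their pointwise max $(0,5,7,\infty)$ is not. The remedy is that the gap-tracing argument works verbatim for arbitrary pointwise minima, so $\calI(G)$ is a complete meet-semilattice; the join of $\boldsigma, \boldtau$ is then recovered as the meet of the nonempty set $\{\boldnu \in \calI(G) : \boldnu \succeq \boldsigma \text{ and } \boldnu \succeq \boldtau\}$, which contains $(\infty)$.
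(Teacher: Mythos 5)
Your proposal is correct, and in two places it takes a genuinely different (and better-justified) route than the paper. For (1), (4) and (5) you essentially match the paper, except that the paper derives (5) from (4) while you argue directly from transitivity of $\preceq$, which is cleaner. For (3) the paper simply cites Kaplansky's existence lemma \cite[Chapter 10, Lemma 1.1]{Fuchs}, whereas you reprove it by the top-down construction; that is the standard proof of that lemma, so your version is self-contained at the cost of length. The substantive divergence is in (2): the paper claims that the pointwise maximum of two $G$-admissible indicators is $G$-admissible "by a similar argument" to the minimum, and your counterexample shows this is false --- for $G$ with $u_0,u_2,u_6,u_7\neq 0$ and $u_5=0$ (e.g.\ $G=\bbZ(p)\oplus\bbZ(p^3)\oplus\bbZ(p^7)\oplus\bbZ(p^8)$), the indicators $(0,5,6,\infty)$ and $(0,2,7,\infty)$ are admissible but their pointwise maximum $(0,5,7,\infty)$ has a gap at $5$ with $u_5=0$. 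The paper's argument for the maximum breaks exactly where you say: the indicator achieving the maximum at position $\kappa$ need not be the one creating the gap at position $\kappa+1$. Your repair --- pointwise minima always stay admissible, so joins exist as meets of the (nonempty, since $(\infty)$ is a common upper bound) set of upper bounds --- salvages the lattice claim, though the join is then not given by the pointwise maximum, and a little bookkeeping with lengths of indicators is needed to see that arbitrary meets exist under the paper's ordering $\preceq$.

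Two minor points. In (3) the statement concerns an arbitrary finite segment of $\boldsigma$, not only an initial one; your construction applies verbatim since any segment of a $G$-admissible indicator is $G$-admissible. Also, in the gap step of that construction you take a preimage of height $\sigma_{i+1}-1$, which does not exist when $\sigma_{i+1}$ is a limit ordinal; all you need is a preimage of height strictly greater than $\sigma_i$, which always exists since $b_{i+1}\in p^{\sigma_i+2}G$, and then adding the socle element $d$ of height exactly $\sigma_i$ works as you describe.
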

 
 \begin{proof} (1) Since $\ind(0)=\{\infty\},\ 0\in G(\sigma)$. Let $a$ and $b\in G(\sigma)$. Since $\height(p^n(a-b))\geq \min\{p^na,\,p^nb\}$ for all $n<\omega,\ a-b\in G(\boldsigma)$ so $G(\sigma)\leq G$.
 By 4.2 (8), $G(\sigma)$ is fully invariant.

(2)  It is clear that $\calI(G)$ is a subposet of $\calI$. Let  $\sigma,\ \tau\in\calI(G)$. Suppose $\rho=\min\{\sigma,\,\tau\}$ has a gap at $\kappa$, Then either $\sigma_\kappa$ or $\tau_\kappa=\rho$ and hence has a gap at $\kappa$. Hence the $\rho_\kappa$--Ulm invariant of $G$ is non--zero so $\rho\in\calI(G)$.
 
 A similar argument shows that $\mu=\max\{\sigma,\,\tau\}\in\calI(G)$.
 
 (3) Let $\boldtau=(\sigma_\kappa,\sigma_{\kappa+1},\dots,\sigma_{\kappa+n},\infty)$. Then $\boldtau$ is a $G$--admissible indicator, so by \cite[Chapter 10, Lemma 1.1]{Fuchs},   $\boldtau=\ind(a)$ for some $a\in G$. 
 
 (4) By definition for all $i\in[0,n],\ \exp(p^ia)=n-i+1$ and $\height(p^ia)= \kappa+i$. Hnce $a\in p^{\kappa+i}G[p^{n-i+1}]$.
 
 (5) This follows  immediately from (4).
 \end{proof}
 
\begin{remark}\label{lat22} 

 $\boldsigma_1\ne\boldsigma_2$ does not imply that $G(\boldsigma_1)\ne G(\boldsigma_2)$.
\end{remark}

 \begin{notation} Let $G$ be a group and $\boldsigma\in\calI(G)$. We have seen in Proposition \ref{lattice} (1) that $G(\boldsigma)\in\calH$. An $H\in\calH(G)$  is called an \textit{indicator subgroup} if $H=G(\boldsigma)$ for some $\boldsigma\in\calI(G)$.

 A \textit{fundamental subgroup} of $G$ is one of the form $p^\kappa G[p^n]=(p^\kappa G)\cap (G[p^n])$,  for some $\kappa\leq\lambda,\ n\in\bbN$.   
  \end{notation}
\begin{lemma}\label{fuin} Let $G$ be unbounded of length $\lambda$.\begin{enumerate}\item For all $\kappa\leq\lambda$  and $n\in\bbN, \ p^\kappa G[p^n]\in\calH(G)$;
\item For all $n,\,m\in\bbN$ and all $\kappa,\,\mu <\lambda,\ p^\kappa G[p^n]\leq p^\mu G[p^m]$ if and only if $n\leq m$ and $\kappa\geq \mu$.
\end{enumerate}
\end{lemma}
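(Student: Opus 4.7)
The plan is to reduce both parts to two elementary facts about endomorphisms of a $p$--group: every $f\in\calE$ satisfies $\height(af)\geq\height(a)$ and $\exp(af)\leq\exp(a)$. For part (1), I would write $p^\kappa G[p^n]=p^\kappa G\cap G[p^n]$; the first factor is fully invariant because heights are non--decreasing under $f\in\calE$, and the second because $p^n(af)=(p^na)f=0$ whenever $a\in G[p^n]$. Lemma \ref{fiL} then places the intersection in $\calH(G)$.

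For part (2), the forward implication is immediate: $n\leq m$ gives $G[p^n]\subseteq G[p^m]$, $\kappa\geq\mu$ gives $p^\kappa G\subseteq p^\mu G$, and the containment of the intersections follows at once. For the converse my plan is to exhibit a single witness $a\in p^\kappa G[p^n]$ of height exactly $\kappa$ and exponent exactly $n$. Under the inclusion hypothesis, such an $a$ lies in $p^\mu G[p^m]$, and reading off its height and exponent yields $\kappa=\height(a)\geq\mu$ and $n=\exp(a)\leq m$ simultaneously.

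The substantive step is producing the witness $a$, which I would obtain by applying Proposition \ref{lattice}(3) to a $G$--admissible indicator of length $n$ beginning at $\kappa$. The main obstacle is ensuring such an indicator exists, which reduces to finding non--zero Ulm invariants of $G$ at each prescribed gap position. When $\kappa+\omega\leq\lambda$, Lemma \ref{Xi}(3) guarantees that $B^{(\kappa)}$ is unbounded, supplying indefinitely many non--zero invariants $u_{\kappa+k}$ along which to step; when $\kappa+\omega>\lambda$, the bounded tail case of Lemma \ref{Xi}(4) reduces the problem to the finite structure of $B^{(\kappa)}$, from which a witness of the required height and exponent is read off directly.
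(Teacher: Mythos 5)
Your part (1) and the forward implication of part (2) coincide with the paper's argument: the paper likewise disposes of (1) by noting that endomorphisms neither decrease heights nor increase exponents, and the ``if'' direction of (2) is the same pair of trivial containments. For the converse of (2) you go further than the paper, which merely restates the membership conditions ($a\in p^\mu G[p^m]$ iff $\exp(a)\leq m$ and $\height(a)\geq\mu$) and asserts the equivalence without exhibiting any element. Your diagnosis that the whole content of the converse is the existence of a witness $a\in p^\kappa G[p^n]$ with $\height(a)=\kappa$ and $\exp(a)=n$ exactly is correct, and it identifies precisely what the paper's proof leaves unsaid.

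However, your construction of the witness does not close that gap. Proposition \ref{lattice}(3) (Kaplansky's existence theorem) produces an element with indicator $(\kappa,\kappa+1,\dots,\kappa+n-1,\infty)$ only when that indicator is $G$--admissible; in particular one needs $u_{\kappa+n-1}\neq 0$ so that $p^{n-1}a$ can have order $p$ and height exactly $\kappa+n-1$, and more basically one needs some element of $p^\kappa G[p^n]$ of height exactly $\kappa$. Lemma \ref{Xi}(3) only guarantees infinitely many non--zero invariants $u_{\kappa+k}$ \emph{somewhere} beyond $\kappa$, not at the positions your indicator requires. When the relevant Ulm invariants vanish the witness genuinely fails to exist and the ``only if'' direction is false as stated: for instance, if $u_0=0$ then $G[p]=pG[p]$, so $p^0G[p]\leq p^1G[p]$ although $0\geq 1$ fails --- exactly the phenomenon the paper itself records in Remark \ref{lat22} and Proposition \ref{nonetheless}. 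So the defect you have uncovered lies in the lemma (and in the paper's proof of it), and your argument cannot repair it without an added hypothesis such as $u_\kappa\ne 0$ and $u_{\kappa+n-1}\ne 0$, or a reformulation in terms of distinct subgroups rather than distinct index pairs.
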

\begin{proof} (1) Let $H=p^\kappa G[p^n],\ a\in H$ and $f\in\calE$. Since $f$ does not decrease heights nor increase exponents, $af\in H$.

(2) Let $a\in p^\kappa G[p^n]$ so $\exp(a)\leq n$ and $\height(a)\geq\kappa$. Since $a\in p^\mu G[p^m]$ if and only if  $\exp(a)\leq m$ and $\height(a)\geq\mu$, $p^\kappa G[p^n]\leq p^\mu G[p^m]$ if and only if $n\leq m$ and $\kappa\geq \mu$.
\end{proof}
 \begin{proposition}\label{fundam} Let $G$ be a group and $\calH=\calH(G)$. Let $S=\{p^\kappa G[p^n]\colon \kappa\in K,\ n\in N\}$ be a set of fundamental subgroups of $G$ for a set $K$ of ordinals $<\lambda$ and a set $N$ of natural numbers.   Then
 \begin{enumerate} \item $\bigcap_{H\in S} H$ and $\sum_{H\in S} H\in \calH$;
    \item Let    $\boldsigma=(\kappa_n\colon n<\omega, \infty)$ be a $G$--admissible indicator and  for all $ n<\omega$ let $H_n= p^{\kappa_n} G[p^n]$. Then $(H_n)$  is a strictly decreasing sequence in $\calH$. \item For each finite segment $\boldtau$ of $\boldsigma$  there is a fundamental subgroup $H$  containing $G(\boldtau)$.
 \end{enumerate}
 \end{proposition}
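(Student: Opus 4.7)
The plan is to deduce each of the three parts from the preceding lemmas, in increasing order of subtlety.

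For part (1), every fundamental subgroup $p^\kappa G[p^n]$ belongs to $\calH(G)$ by Lemma \ref{fuin}(1), and Lemma \ref{fiL} shows $\calH(G)$ is a complete lattice under inclusion. Therefore both $\bigcap_{H\in S}H$ and $\sum_{H\in S}H$ lie in $\calH(G)$ as the arbitrary meet and join in this lattice, with no further work required.

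For part (2), each $H_n=p^{\kappa_n}G[p^n]$ is fundamental and hence in $\calH(G)$ by Lemma \ref{fuin}(1). To establish that $(H_n)$ is strictly decreasing I would compare consecutive terms using Lemma \ref{fuin}(2), invoking the $G$-admissibility of $\boldsigma$ to secure strictness at each step. At any gap position $n$ the admissibility condition forces the Ulm invariant $u_{\kappa_n}$ to be nonzero, and a socle element of height exactly $\kappa_n$ then provides a witness distinguishing $H_n$ from $H_{n+1}$. Non-gap steps, where $\kappa_{n+1}=\kappa_n+1$, are handled by exhibiting an element of exponent exactly $n+1$ sitting at height $\kappa_n$.

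For part (3), let $\boldtau=(\kappa_0,\ldots,\kappa_m,\infty)$ be a finite segment of $\boldsigma$. The precedence $\boldtau\preceq\ind(a)$, combined with Proposition \ref{lattice}(4), yields $\exp(a)\leq m+1$ and $\height(a)\geq\kappa_0$ for every $a\in G(\boldtau)$; hence $a\in p^{\kappa_0}G[p^{m+1}]$, which is a fundamental subgroup. Therefore $G(\boldtau)\leq p^{\kappa_0}G[p^{m+1}]$, as required.

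The principal obstacle is part (2): both parameters in $H_n=p^{\kappa_n}G[p^n]$ are increasing with $n$, so the direction of containment between consecutive terms must be read off carefully from Lemma \ref{fuin}(2) before the admissibility of $\boldsigma$ can be brought to bear. Once this comparison is set up, the strictness is bookkeeping on the nonzero Ulm invariants forced at each gap of $\boldsigma$; parts (1) and (3) are then essentially immediate consequences of the earlier lemmas.
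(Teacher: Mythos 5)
Parts (1) and (3) of your proposal are correct and are essentially the paper's own arguments: (1) is the appeal to Lemma \ref{fiL} (with Lemma \ref{fuin}(1) supplying that each $p^\kappa G[p^n]$ lies in $\calH$), and your observation for (3) that every $a\in G(\boldtau)$ satisfies $\height(a)\geq\kappa_0$ and $\exp(a)\leq m+1$, hence lies in the fundamental subgroup $p^{\kappa_0}G[p^{m+1}]$, is the paper's argument stated more cleanly.

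The gap is in part (2), and you walk right up to it without resolving it. You correctly note that both parameters of $H_n=p^{\kappa_n}G[p^n]$ increase with $n$ and that the direction of containment must be read off from Lemma \ref{fuin}(2); but that lemma is an equivalence, and it says $H_{n+1}\leq H_n$ holds if and only if $n+1\leq n$ and $\kappa_{n+1}\geq\kappa_n$. The first condition is false, so the very lemma you invoke refutes the containment whenever $H_{n+1}$ contains an element of exponent exactly $n+1$ (which happens for any unbounded $G$ and generic admissible $\boldsigma$). The witnesses you describe --- a socle element of height exactly $\kappa_n$, an element of exponent exactly $n+1$ of height $\kappa_{n+1}$ --- establish that neither $H_n\leq H_{n+1}$ nor $H_{n+1}\leq H_n$, i.e.\ incomparability, not strict decrease. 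Concretely, for $G=\bigoplus_k\bbZ(p^k)$ and $\boldsigma=(0,1,2,\dots,\infty)$ one gets $H_n=p^nG[p^n]$, and $H_1=pG[p]$ and $H_2=p^2G[p^2]$ are incomparable. The paper's own one-line proof of (2) has the same defect; the claim is only salvageable if the exponent parameter decreases along the sequence, as in Proposition \ref{lattice}(4) where $p^ia\in p^{\kappa_i}G[p^{n-i+1}]$ --- for $H_i=p^{\kappa_i}G[p^{N-i}]$ Lemma \ref{fuin}(2) does give a decreasing chain, and admissibility then supplies the strictness witnesses exactly as you describe. As written, your plan for (2) cannot be completed.
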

 \begin{proof}
  (1)  Follows from Lemma \ref{fiL}.

 (2)  Since $(\kappa_n)$ increases with $n$, by Lemma \ref{lattice}, $(H_n)$ is a strictly decreasing sequence in $\calH$. 
 
 Let $a\in G(\boldsigma)$ have exponent $m$ so $\ind(a)=(\tau_0,\dots, \tau_m,\infty)$ such that for some segment $(\sigma_\kappa,\dots \sigma_{\kappa+m}$ of $\boldsigma,\ \tau_i\geq \sigma_{\kappa+i}$.

Then $G(\boldsigma)\leq p^{\tau_0}G[p^m]\leq H_0$.
  \end{proof}
  \begin{remark} If $\boldsigma$ is any indicator,  the proof of Proposition \ref{fundam} (2) shows that $G(\boldsigma)\in\calH(G)$ provided that $G(\boldsigma)$ is non--empty. The $G$--admissibility is required only to  ensure $G(\boldsigma)\ne\emptyset$.
  \end{remark}
\begin{corollary}\label{label}\begin{enumerate}\item Every indicator subgroup is contained in a  fundamental subgroup;
\item If $G$ is simply presented, then every fully invariant subgroup is contained in a fundamental subgroup.
\qed\end{enumerate}\end{corollary}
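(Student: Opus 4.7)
The plan is to reduce both parts to the content of Proposition~\ref{fundam}. For part~(1), let $G(\boldsigma)$ be an indicator subgroup with $\boldsigma=(\sigma_0,\sigma_1,\dots,\infty)\in\calI(G)$ of length $n\in\bbN\cup\{\omega\}$. Unpacking the pointwise order on $\calI$, every $a\in G(\boldsigma)$ satisfies $\sigma_0\leq\height(a)$, and when $n$ is finite the length clause of $\preceq$ additionally forces $\exp(a)\leq n$. This gives $G(\boldsigma)\leq p^{\sigma_0}G[p^n]$, which is a fundamental subgroup. When $n=\omega$ the same height bound yields $G(\boldsigma)\leq p^{\sigma_0}G$, which plays the role of the fundamental subgroup under the convention $G[p^\omega]=G$; alternatively, one applies Proposition~\ref{fundam}(3) to a sufficiently long finite segment of $\boldsigma$.

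For part~(2), suppose $G$ is simply presented and let $H$ be a fully invariant subgroup of $G$. The key additional ingredient is transitivity: since simply presented groups are transitive, $\ind(a)\preceq\ind(b)$ implies the existence of an endomorphism sending $a$ to $b$. Combined with the full invariance of $H$, this gives $G(\ind(a))\leq H$ for every $a\in H$. Using the lattice structure of $\calI(G)$ (Proposition~\ref{lattice}(2)), I would form the pointwise infimum $\boldsigma_H$ of $\{\ind(a):0\ne a\in H\}$ and show $H=G(\boldsigma_H)$: the inclusion $H\subseteq G(\boldsigma_H)$ is immediate from the definition of infimum, and the reverse uses transitivity applied to elements of $H$ that realize or approach the infimum, together with the closure properties of $\calH$ recorded in Lemma~\ref{fiL}. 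Once $H$ is displayed as an indicator subgroup, part~(1) supplies the desired fundamental subgroup.

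I expect the delicate step to be exhibiting $H$ as a single indicator subgroup $G(\boldsigma_H)$ in part~(2): the infimum $\boldsigma_H$ need not equal $\ind(a)$ for any particular $a\in H$, so transitivity cannot be invoked against a single witness. The escape is to leverage the strong structure of simply presented groups from Theorem~\ref{MAIN}---their decomposability along Ulm subgroups and the existence of elements of every admissible indicator, guaranteed by the $G$-admissibility of each finite truncation of $\boldsigma_H$---to produce, coordinate by coordinate, elements of $H$ whose indicator data witness the infimum. Part~(1) itself is expected to be a direct definitional unwinding, provided one reads ``fundamental subgroup'' flexibly when $\boldsigma$ has infinite length.
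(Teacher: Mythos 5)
Your proof is correct and follows essentially the same route the paper intends: part (1) is the containment $G(\boldsigma)\leq p^{\sigma_0}G[p^{n+1}]$ already implicit in Proposition~\ref{fundam}, and part (2) reduces to part (1) via the transitivity of simply presented groups (\S 6(9)), which is exactly the paper's later remark that every fully invariant subgroup of a transitive group is an indicator subgroup $G(\boldsigma)$. The two caveats you raise---that for $\boldsigma$ of infinite length no fundamental subgroup $p^\kappa G[p^n]$ with $n\in\bbN$ can work (take $\boldsigma=(0,1,2,\dots,\infty)$, so $G(\boldsigma)=G$, in an unbounded group), and that realizing the pointwise infimum $\boldsigma_H$ requires Kaplansky's finite-combination argument rather than a single witness---are genuine issues with the paper's unproved assertions, not defects of your proposal.
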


  \subsection{The Fundamental Matrix, Bounded Case:}

 Let $  G=\bigoplus_{i\in [k]}\left(\bbZ(p^{n_i})\right)^{m_i}$, let $e=\exp(G) =n_k$,  and let $O=(n_i \colon i\in[k])$ be the sequence of exponents  of the homocyclic summands of $G$. The  Fundamental Matrix  of $G$ is the
  $e\times k $ matrix $M=M(G)$ with $(i,j)$--entry \[M(i,j)=p^{ j}G[p^{i}]=\{a\in G\colon \height(a)\geq  j\text{ and }\exp(a)\leq i\}.\]  
 
 The rows $i$ represent exponents and are indexed  bottom to top  from $[e]$; the   columns $j$ represent heights and are indexed from $[0,k-1]$    left to right. For example,
if $G$ is homocyclic of exponent $e$, then $k=1
$, each $i\in[e]$  and   $M$ is the column matrix \[\begin{bmatrix} M(e,0)\\M(e-1,0)]\\\vdots\\M(1,0)\end{bmatrix}=\begin{bmatrix} G\\ G[p^{e-1}]\\\vdots\\ G[p]\end{bmatrix}.\]

\begin{lemma}\label{lat2} With the notation above,
\begin{enumerate}
\item Every entry $M(i,j) $  of $M$    is a distinct fundamental subgroup of $G$;

\item The entries of $M$ form a lattice   with  $M(i,j)\vee M(k,\ell)=M(\max\{i,k\},\min\{j,\ell\})$ and $M(i,j)\wedge M(k,\ell)=M(\min\{i,k\},\max\{j,\ell\})$. 

\end{enumerate}
\end{lemma}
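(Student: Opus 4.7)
The plan is to handle (1) by combining the definition of fundamental subgroup with a coordinate-wise analysis, and to prove (2) by verifying the meet and join formulas separately, with meet being essentially tautological and join requiring the real work.

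For (1), each $M(i,j)=(p^jG)[p^i]$ is a fundamental subgroup by the very definition given just above. For distinctness across the index range, I would exploit the decomposition $G=\bigoplus_{s\in[k]}B_s$ with $B_s=(\bbZ(p^{n_s}))^{m_s}$ to write $M(i,j)\cap B_s=(p^jB_s)[p^i]$; within a cyclic generator of exponent $n_s$ (with $j<n_s$) this is generated by $p^{\max\{n_s-i,\,j\}}$ applied to the generator and has order $p^{\min\{i,\,n_s-j\}}$, and is zero when $j\geq n_s$. The resulting exponent profile $(\min\{i,n_s-j\})_{s\in[k]}$ is what distinguishes distinct matrix indices $(i,j)$.

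For (2), the meet formula is interpreted as subgroup intersection: the condition $a\in M(i,j)\cap M(k,\ell)$ unpacks to $\height(a)\geq\max\{j,\ell\}$ and $\exp(a)\leq\min\{i,k\}$, which is exactly the defining condition for $M(\min\{i,k\},\max\{j,\ell\})$, and the formula drops out. The join formula is interpreted as subgroup sum; the inclusion $M(i,j)+M(k,\ell)\subseteq M(\max\{i,k\},\min\{j,\ell\})$ is immediate from the additive behaviour of height (which only decreases to the minimum of the summands) and exponent (which only increases to the maximum). The reverse inclusion is the substantive step: given $a$ of height at least $\min\{j,\ell\}$ and exponent at most $\max\{i,k\}$, one must produce $a=b+c$ with $b\in M(i,j)$ and $c\in M(k,\ell)$. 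My strategy is to reduce this to one cyclic component $\langle g\rangle\cong\bbZ(p^{n_s})$, where the subgroup lattice is a chain and the identity $\langle p^a g\rangle+\langle p^b g\rangle=\langle p^{\min\{a,b\}}g\rangle$ reads off the sum directly in terms of the parameters.

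The main obstacle is the reverse inclusion in the join formula. The reduction to a single cyclic component makes the algebra manageable, but the case analysis comparing $i,k$ against $n_s-j,n_s-\ell$ (that is, whether the exponent constraint is active in that summand or vacuous) requires care; this is exactly the place where the choice of index range from (1) becomes essential, ensuring that the sum lands back in an entry of $M$ rather than merely some larger fundamental subgroup.
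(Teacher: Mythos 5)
Your component-wise computation of the entries is correct and considerably more explicit than the paper's own argument (which merely asserts distinctness and checks only one inclusion for the join), but the two steps you defer are exactly the ones that fail, and they fail because the statement is false for the stated index ranges $i\in[e]$, $j\in[0,k-1]$. For (1), the profile $\bigl(\min\{i,n_s-j\}\bigr)_s$ is not injective on that range: since $p^jG$ has exponent $e-j$, we have $M(i,j)=p^jG[p^i]=p^jG$ for every $i\geq e-j$, so each column $j\geq 1$ contains $j+1$ identical entries. The paper's own Example \ref{bex} exhibits this: $M(4,1)=pG[p^4]=pG=pG[p^3]=M(3,1)$. No argument can close this gap without re-indexing the matrix (restricting row $i$ to $[e-j]$ in column $j$, or passing to entries modulo equality); distinctness as stated holds only when $G$ is homocyclic.

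For (2), your meet argument is fine, but the join formula fails under either reading (subgroup sum, or least entry containing both). Take $G=\bbZ(p)\oplus\bbZ(p^2)\oplus\bbZ(p^4)$, so $e=4$, $k=3$, and let $b,c$ generate the summands of exponent $2$ and $4$. Then $M(3,2)=p^2G[p^3]=\la p^2c\ra$ is already contained in $M(2,1)=pG[p^2]=\la pb\ra\oplus\la p^2c\ra$, so both their sum and their least upper bound equal $M(2,1)$; the formula instead predicts $M(3,1)=pG[p^3]=\la pb\ra\oplus\la pc\ra$, which strictly contains $M(2,1)$. Your reduction to a single cyclic component is the right tool and would have exposed this: in $\bbZ(p^{n_s})$ the sum has generator index $\min\{\max\{n_s-i,j\},\max\{n_s-k,\ell\}\}$ while the claimed join has generator index $\max\{\min\{n_s-i,n_s-k\},\min\{j,\ell\}\}$, and the identity between these fails on a chain (take $n_s-i=1$, $j=2$, $n_s-k=2$, $\ell=1$). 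The same example shows that the ``only if'' direction of Lemma \ref{fuin}(2), on which the monotonicity of $M$ in its indices implicitly rests, is false once the exponent bound is inactive. So the gap in your proposal is genuine but not yours alone to fix: completing your case analysis refutes the lemma rather than proving it.
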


\begin{proof} (1) Each $p^jG[p^i] $ is a fundamental subgroup, so we must show that if $( i,j)\ne (i',j')$ then $M(i,j)\ne M(i',j')$. Since the columns of $M$ are indexed by the homocyclic components of $G$, only  the subgroup $p^iG[p^j]$ contains elements of minimum height $i$. Similarly only  $p^iG[p^j]$ contains elements of maximum exponent $j$.

(2)  Let $a\in M(i,j)\vee M(k,\ell)$. Then $\height(a)\geq\max\{i,k\}$ and $\exp(a)\leq\min\{j,\ell\}$. The proof for meets is similar.
\end{proof}

\begin{corollary}\label{admi1} 
  Each entry $M(i,j) $ determines a \lq quartering\rq\  of $M$ by the row coordinate $i$ and the column coordinate $j$. The entries of the south--east quadrant are   contained in $M(i,j)$, those in the north--west quadrant   contain  $M(i,j)$ and those in the other quadrants  are not comparable to $M(i,j)$.
\qed\end{corollary}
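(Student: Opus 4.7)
The plan is to reduce the statement directly to the containment criterion established in Lemma \ref{fuin}(2), using only the indexing conventions of the fundamental matrix $M$. That lemma gives the precise rule: $M(i,j) = p^j G[p^i] \leq p^\ell G[p^k] = M(k,\ell)$ if and only if $i \leq k$ and $j \geq \ell$. In words, enlarging the exponent bound and shrinking the height bound each enlarge the subgroup; the partial order on fundamental subgroups is therefore the product of the natural order on exponents with the reverse order on heights.

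Next I would translate this criterion into the geometric language of the matrix. Row indices $i$ increase from bottom $i = 1$ to top $i = e$, while column indices $j$ increase from left $j = 0$ to right $j = k-1$. Fix an entry $M(i,j)$. An entry $M(k,\ell)$ in the south--east quadrant satisfies $k < i$ and $\ell > j$, hence $k \leq i$ and $\ell \geq j$, which by the criterion gives $M(k,\ell) \leq M(i,j)$. An entry in the north--west quadrant satisfies $k > i$ and $\ell < j$, hence $k \geq i$ and $\ell \leq j$, which gives $M(k,\ell) \geq M(i,j)$. These two verifications yield the first two assertions of the corollary.

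For the remaining quadrants I would argue by exclusion. If $M(k,\ell)$ lies in the north--east quadrant, then $k > i$ and $\ell > j$, so the pair fails both $(k \leq i,\ \ell \geq j)$ and $(k \geq i,\ \ell \leq j)$; hence neither inclusion between $M(i,j)$ and $M(k,\ell)$ can hold. By Lemma \ref{lat2}(1) the entries are distinct, so the two subgroups are genuinely incomparable. The south--west case ($k < i$, $\ell < j$) is symmetric.

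There is no real obstacle here; the content of the corollary is entirely contained in Lemma \ref{fuin}(2) together with the convention that rows grow upward and columns grow rightward. The only point requiring minor care is aligning the geometric direction (up/down/left/right) with the direction of the order on exponents and heights, which is done once at the start and then applied uniformly to the four quadrants.
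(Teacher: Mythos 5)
Your proposal is correct, and it takes essentially the route the paper intends: the corollary is stated with no proof precisely because it is an immediate translation of the containment criterion for fundamental subgroups (Lemma \ref{fuin}(2), equivalently the meet/join formulas of Lemma \ref{lat2}(2)) into the matrix's bottom-to-top, left-to-right indexing. Your only extra care — noting that failure of both inequalities in the ``if and only if'' rules out both inclusions, hence incomparability — is exactly the detail the paper leaves implicit.
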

\begin{remark} For all $(i,j)\in \exp(G)\times \exp(G),\ p^jG[p^i]$ is a fully invariant subgroup of $G$    whereas the fundamental matrix uses only $(i,j)\in \exp(G)\times k$, where $k\leq \exp(G)$ is the set of indices of non--zero homocyclic components. Nonetheless, the Fundamental Matrix  contains an alias of every $p^jG[p^i]$. 
\end{remark}

\begin{proposition}\label{nonetheless} Let $s$ be the sequence of indices of homocyclic components of $G$. Let $p^jG[p^i]$, where  $j\not\in s$,   be a non--zero fundamental subgroup of $G$. Then there exists   $\ell\in s$ such that $p^jG[p^i]=p^\ell G[p^i]$.
\end{proposition}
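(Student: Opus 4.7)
The plan is to reduce everything to a direct-sum computation over the homocyclic summands of $G$. Writing $G=\bigoplus_{r=1}^{k} C_r$ with $C_r=(\bbZ(p^{n_r}))^{m_r}$, both height and exponent behave coordinatewise, so
\[
p^{j}G[p^{i}] \;=\; \bigoplus_{r=1}^{k}\bigl(p^{j}C_r\bigr)[p^{i}].
\]
Inside a homocyclic $C_r$ a direct calculation shows that $(p^{j}C_r)[p^{i}]=p^{\max(j,\,n_r-i)}C_r$ when $j<n_r$, and equals $0$ when $j\ge n_r$ (with the convention $p^{t}C_r=0$ for $t\ge n_r$). So $p^{j}G[p^{i}]$ is completely determined, as a subgroup of $G$, by the pair $(j,i)$ together with the sequence $(n_r)_r$.

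Given $j\notin s$, I would take $\ell$ to be the largest element of $s$ with $\ell\le j$ (declaring $\ell=0$ if $s$ has no such element, since adjoining $0$ to $s$ is harmless). Since $j\notin s$ we have $\ell<j$, and no index of a homocyclic component lies in the half-open interval $(\ell,j]$. Hence the supports $\{r:n_r>j\}$ and $\{r:n_r>\ell\}$ coincide, so the two decompositions of $p^{j}G[p^{i}]$ and $p^{\ell}G[p^{i}]$ are direct sums indexed by the same set of $r$'s, and what remains is a termwise comparison.

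It suffices to show, for each $r$ in this common support, that
\[
\max(j,\,n_r-i) \;=\; \max(\ell,\,n_r-i).
\]
Since $\ell\le j<n_r$, both maxima collapse to $n_r-i$ precisely when $n_r-i\ge j$. The hypothesis that $p^{j}G[p^{i}]$ is nonzero, combined with $j\notin s$ and the choice of $\ell$, is what guarantees this inequality for every $r$ contributing to the direct sum. Once the maxima agree, the two $r$-summands coincide as subgroups of $C_r$ and we conclude $p^{j}G[p^{i}]=p^{\ell}G[p^{i}]$ as subgroups of $G$.

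The main obstacle is this last numerical check: one must rule out any $n_r$ falling in the interval $(j,j+i)$, since such an $n_r$ would make $\max(j,n_r-i)=j$ while $\max(\ell,n_r-i)<j$, destroying the componentwise equality. This is the essential use of the hypothesis that $s$ is the sequence of indices of the homocyclic components --- together with $j\notin s$ it prohibits such an awkward $n_r$, and the argument closes.
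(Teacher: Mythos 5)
Your coordinatewise reduction is correct and considerably more explicit than the paper's own argument: the identity $(p^{j}C_r)[p^{i}]=p^{\max(j,\,n_r-i)}C_r$ is right, and it isolates exactly what must be verified. The difficulty is the final numerical step, which you yourself flag as ``the main obstacle'' and then assert is handled by the hypotheses. It is not. The assumption $j\notin s$ excludes only $n_r=j$; it says nothing about exponents $n_r$ with $j<n_r<j+i$, and the nonvanishing of $p^{j}G[p^{i}]$ says only that \emph{some} $n_r$ exceeds $j$. So nothing prevents an $n_r$ from lying in the forbidden window $(j,\,j+i)$, and when one does, the componentwise maxima genuinely disagree: $\max(j,n_r-i)=j$ while $\max(\ell,n_r-i)<j$.

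The gap is not repairable, because the statement is false for $i\geq 2$. Take $G=\la a\ra\oplus\la b\ra$ with $\exp(a)=2$, $\exp(b)=4$ (the paper's Example \ref{bex}), so $s=(2,4)$, and take $j=3\notin s$, $i=2$. Then $p^{3}G[p^{2}]=\la p^{3}b\ra$ has order $p$, whereas your choice $\ell=2$ gives $p^{2}G[p^{2}]=\la p^{2}b\ra$ of order $p^{2}$, and $p^{4}G[p^{2}]=0$; no $\ell\in s$ works. (Your argument is sound for $i=1$, where the window $(j,j+1)$ contains no integers.) For comparison, the paper's proof has the mirror-image defect: it takes $\ell$ to be the \emph{least} element of $O$ exceeding $j$ and asserts that $\height(a)\geq j$ forces $\height(a)\geq\ell$, which fails on the same example. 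A correct version of the proposition would have to let the set of essential heights depend on $i$: one checks from your formula that $p^{j}G[p^{i}]=p^{j+1}G[p^{i}]$ precisely when no $n_r$ lies in the interval $[\,j+1,\,j+i\,]$, so the exponent sequence alone cannot serve as the index set $s$.
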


\begin{proof} Since $p^jG[p^i]\ne 0$, there exists a least $\ell\in O$ such that $j<\ell$. Since $\height(a)\geq j$ mplies $\height(a)\geq\ell,\  p^jG[p^i]=p^\ell G[p^i]$.
\end{proof}

The  Fundamental Matrix $M$ also embodies $\calI(G)$ the lattice of $G$--admissible indicators as paths, in the following sense.  

\begin{notation}A  \textit{rising   path} in $M$ is sequence of entries  which form an ascending digraph with arrows  $\left(M(i,j),\ M(i+1, j+\ell)\right)$  where $\ell\in[exp(G)-j] $. The initial entry $M(i,j)$ is arbitrary, and each subsequent entry comes from the row above and some column to the right. Such a rising   sequence always exists, but has only the initial entry $M(i,j)$ if $i=\exp(G)$ or if $j=\exp(G)-1$. Note that if  $k> j+1$, then the Ulm sequence for $G$ has a gap at $i$, so rising paths satisfy Kaplansky's Gap Condition of \S 6.
\end{notation}
\begin{lemma}\label{embodies} Let $\boldsigma=(\sigma_1,\sigma_2,\dots,\sigma_n)$ be a $G$--admissible indicator, and let $a\in G$ with  $\ind(a)=\boldsigma$.

 Then $M$ contains a  rising   path   $M(i,j_0),\ M(i+1,j_1),\dots,\ M(i+n-1, j_{n-1})$  such that $a\in
M(i,j_0),\ pa\in M(i+1,j_1),\dots$ and $p^na\in M(i+n-1, j_{n-1})$.

Conversely, let $(M(i,j_0),\ M(i+1,j_1),\dots, M(i+k-1,j_{k-1})$ be a  rising path   in $M$ and let $\boldsigma=(j_0,\,j_1,\dots,j_{k-1})$  . Then there exists $a\in G$ with $\ind(a)=\boldsigma$.
\end{lemma}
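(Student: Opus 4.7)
The plan is to prove both directions by translating between the height-and-exponent data of an element and the entries of the fundamental matrix $M$. Since $M(i,j) = p^j G[p^i]$ is characterised by the two inequalities $\height(\cdot) \geq j$ and $\exp(\cdot) \leq i$, membership in a fundamental subgroup is literally the indicator data in disguise. The forward direction then becomes a direct computation; the converse invokes Proposition \ref{lattice}(3) once the rising path is seen to encode a $G$-admissible indicator.

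For the forward direction, suppose $\ind(a) = \boldsigma = (\sigma_1, \dots, \sigma_n)$, so the $\sigma_l$ are the successive heights of $a, pa, \dots$ and the exponent of $p^l a$ decreases by one with each application of $p$. Set $j_l = \sigma_{l+1}$ and choose the starting row $i$ large enough that the exponent bound $\exp(p^l a) \leq i + l$ holds for every $l$ in range; then the inclusion $p^l a \in p^{j_l} G[p^{i+l}] = M(i + l, j_l)$ is immediate from the definitions of height and exponent. The column coordinates strictly increase because indicator entries do, satisfying the $\ell \geq 1$ requirement of a rising path. If some $\sigma_l$ is not a column index of $M$, I invoke Proposition \ref{nonetheless} to replace it by the nearest admissible column $\ell \in O$ that gives the same fundamental subgroup.

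For the converse, given a rising path $M(i, j_0), M(i+1, j_1), \dots, M(i+k-1, j_{k-1})$, form $\boldtau = (j_0, j_1, \dots, j_{k-1}, \infty)$. The rising-path definition forces $j_l < j_{l+1}$, and the observation accompanying the definition that any strict gap $j_{l+1} > j_l + 1$ corresponds to a non-zero Ulm invariant of $G$ guarantees that $\boldtau$ satisfies Kaplansky's gap condition, i.e., $\boldtau \in \calI(G)$. Proposition \ref{lattice}(3) then supplies an $a \in G$ with $\ind(a) = \boldtau$, and applying the forward direction places $a, pa, \dots, p^{k-1} a$ on the prescribed path.

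The main obstacle I expect is bookkeeping with the two indexing conventions at play: the indicator is indexed by how many times $p$ has been applied, so the exponent of $p^l a$ decreases as $l$ increases, while rows of $M$ are indexed by exponent bounds, so row $i + l$ contains elements of exponent at most $i + l$. Verifying that all required entries lie inside the actual $e \times k$ matrix rather than beyond its right-hand or top edges — which may force the replacement of columns via Proposition \ref{nonetheless} — will need careful attention, particularly when $\exp(a)$ or the indicator heights sit near the matrix boundaries.
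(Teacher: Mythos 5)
Your proposal is correct and takes essentially the same route as the paper, whose entire proof is the one-line observation that $p^ka\in M(i+k-1,j_{k-1})$ for all $k$ if and only if $M$ admits the given rising path; your forward direction is exactly this unwinding of the height and exponent conditions defining $M(i,j)=p^jG[p^i]$. Your converse is in fact more complete than the paper's, since you explicitly supply the existence step (the gap condition plus Kaplansky's realization theorem, i.e.\ Proposition~\ref{lattice}(3)) that the paper's proof leaves implicit, and the boundary and indexing issues you flag are genuine but are present in the paper's own statement rather than introduced by your argument.
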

\begin{proof}     $p^ka\in M(i+k-1, j_{k-1})$ for all $k\in[n]$ if and only if $M$ admits the given  rising  path.
 \end{proof}

 Suppose  $\boldsigma\in\calI(G)$ corresponds to the   rising path   $\boldsigma=M(i,j_0),M(i+1,j_1),\dots,\\ M(i+m, j_{m})$. How is the fully invariant subgroup $G(\boldsigma)$ related to the entries $M(n+j, i_j)$?
 
 \begin{proposition}\label{relate} Let $\boldsigma= (\sigma_1,\dots, \sigma_m)$ and   let
 $(M(i+m, j_m)\colon m\in[k])$  be the corresponding rising   path.. Then  
 $G(\sigma)\leq M(i+1,j_1)\leq\cdots\leq M(i+k, j_k)$.
 \end{proposition}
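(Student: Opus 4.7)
My plan is to combine Lemma~\ref{embodies} (the correspondence between rising paths and $G$-admissible indicators) with Proposition~\ref{lattice}(4) (which pins down the height and exponent of $p^{r-1}a$ from $\ind(a)$). The first step is to establish $G(\boldsigma)\leq M(i+1,j_1)$ directly: for any $a\in G(\boldsigma)$, the precedence $\boldsigma\preceq\ind(a)$ simultaneously bounds $\exp(a)$ by the length of $\boldsigma$ and forces $\height(a)\geq\sigma_1=j_1$, so $a\in p^{j_1}G[p^{i+1}]=M(i+1,j_1)$.

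For the remaining chain $M(i+1,j_1)\leq M(i+2,j_2)\leq\cdots\leq M(i+k,j_k)$ I would iterate the preceding argument along the rising path, tracking the $p$-multiples of a witness $a$ realising $\boldsigma$: by Lemma~\ref{embodies}, the element $p^{r-1}a$ sits in $M(i+r,j_r)$, and since $G(\boldsigma)$ is fully invariant it absorbs these multiples. Each step-up would invoke Lemma~\ref{fuin}(2) together with the Kaplansky gap condition (guaranteeing $G$-admissibility of the path) to convert the combined exponent/height increment into a subgroup inclusion.

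The main obstacle lies in this second step. A face-value application of Lemma~\ref{fuin}(2) demands $j_r\geq j_{r+1}$, while a rising path has $j_{r+1}>j_r$, so the abstract containment criterion cannot be used directly. The resolution must go through the element-level rising-path correspondence of Lemma~\ref{embodies}: the orbits $a,\,pa,\ldots,p^{k-1}a$ of any witness $a\in G(\boldsigma)$ traverse the path, and the full invariance of $G(\boldsigma)$ together with the gap condition supplies the comparability of the matrix entries needed for the chain. Making this step rigorous, by re-expressing each $M(i+r,j_r)$ in terms of the Ulm-invariant structure so that the successive entries are genuinely nested on the relevant orbits, is the delicate part of the argument.
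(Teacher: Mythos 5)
Your first step is sound and is essentially all that the paper's own proof contains: for $a\in G(\boldsigma)$ the precedence $\boldsigma\preceq\ind(a)$ bounds $\exp(a)$ and gives $\height(p^{r-1}a)\geq\sigma_r$ for each $r$, so $a$ lies in $M(i+1,j_1)$ and, more generally, $p^{r-1}a$ lies in $M(i+r,j_r)$ for every $r$. The paper's proof is exactly this element-wise verification (stated as a chain of equivalences, with some index slippage between $a$ and $p^{r-1}a$); it establishes $G(\boldsigma)\leq M(i+r,j_r)$ for each $r$, i.e.\ containment in the intersection of the entries along the path, and nothing more.

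The obstacle you flag in your second step is real, but you have misdiagnosed it as a delicacy to be overcome rather than a falsity to be reported. The chain $M(i+1,j_1)\leq M(i+2,j_2)\leq\cdots\leq M(i+k,j_k)$ fails for a general rising path: containment of $p^{j_r}G[p^{i+r}]$ in $p^{j_{r+1}}G[p^{i+r+1}]$ requires the height parameter to decrease, whereas a rising path forces $j_{r+1}>j_r$, and neither full invariance of $G(\boldsigma)$ nor the gap condition repairs this, because the issue concerns elements of $M(i+r,j_r)$ that are not $p$-multiples of elements of $G(\boldsigma)$. Concretely, in Example \ref{bex} with $G=\la a\ra\oplus\la b\ra$, $\exp(a)=2$, $\exp(b)=4$, the rising path $M(2,0),\,M(3,1)$ has $M(2,0)=G[p^2]\ni a$ while $a\notin pG[p^3]=M(3,1)$ since $\height(a)=0$. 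So your element-tracking argument proves everything that is provable here, namely $G(\boldsigma)\leq\bigcap_{r\in[k]}M(i+r,j_r)$ together with $p^{r-1}G(\boldsigma)\leq M(i+r,j_r)$; the displayed chain of inclusions among the matrix entries is an error in the statement (the paper's proof does not establish it either), and you should say so rather than search for a rigorous version of a step that cannot be made to work.
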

 
 \begin{proof} Let $a\in G$. Then $a\in G(\boldsigma)$ if and only if for all $m\in[k],\ \height(p^m a)\geq j_m)$ if and only if for all  $m\in[j],\ a\in M(i+m,j_m)$.
 \end{proof}
 \begin{example}\label{bex} 
 
Let $G=\la a\ra\oplus \la b\ra$ with $\exp(a)=2$ and $\exp(b)=4$. 
The Fundamental Matrix of $G$ is:
\vskip 0.5cm

 	$M= \begin{bmatrix}  M(4,0)&M(4,1)\\
  M(3,0)&M(3,1)\\
  M(2,0)&M(2,1)\\
  M(1,0)&M(1,1)\\
  \end{bmatrix}$   
 	 $ =	  \begin{bmatrix}  \la a\ra\oplus\la b\ra&\la pa\ra\oplus\la pb\ra\\
 \la a\ra\oplus\la pb\ra&\la pa\ra\oplus\la pb\ra \\
 \la pa\ra\oplus\la p^2b\ra&\la pa\ra\oplus\la p^2b\ra \\
 \la pa\ra\oplus\la p^2b\ra&\la pa\ra\oplus\la p^3b\ra \\
  \end{bmatrix}$
 
  \bigskip  
 \end{example} 

By   brute force enumeration, $G$ has 11   fully invariant subgroups 
so 
11 admissible  rising diagonal paths, 3 of length 1, 4 of length 2, 2 of length 3 and 1 of lengths 0 and 4. 
\vskip 0.5cm

 \begin{tabular} {|c|c|c|}
 
 \hline
Indicator&FI Subgroup&Ind. Decomp\\

\hline
$(\infty)$&0&$0$\\

 $(1,\infty)$&G[p] &$\la pa\ra\oplus \la p^3b\ra$\\

 $(2,\infty)$&$p^2G $&$ \la p^2b\ra$\\
 
   $(3,\infty)$&$p^3G $&  $\la p^3b\ra$\\ 
 
$(0,1,\infty)$&$G[p^2]$&$\la a\ra\oplus \la p^2b\ra$\\

$(1,2,\infty)$&$pG $&$\la pa\ra\oplus \la pb\ra$\\

$(1,3,\infty)$&$pG[p^2]$&$\la pa\ra\oplus \la p^2b\ra$\\
  
$(2,3,\infty)$&$p^2G $&$\la p^2b\ra$\\

$(0,1,2,\infty)$&$G[p^3]$&$\la a\ra\oplus \la pb\ra$\\ 
 
$(1,2,3,\infty)$&$pG $&$\la pa\ra\oplus\la pb\ra$\\ 

 $ (0,1,2,3,\infty)$&$G[p^4]$&$G$\\
\hline
\hline

\end{tabular}
\vskip 0.5cm 

 \begin{corollary}\label{loi} Let $G$ be a bounded group. The lattice $\calI(G)$ of $G$--admissible indicators determines the  lattice $\calH$ of fully invariant subgroups of $G$. 
 \qed\end{corollary}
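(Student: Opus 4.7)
The plan is to exhibit a surjective, order--reversing map $\Phi\colon \calI(G)\to\calH$ given by $\Phi(\boldsigma)=G(\boldsigma)$, from which $\calH$ can be reconstructed from $\calI(G)$ as a quotient (up to order reversal). Well--definedness follows from Proposition \ref{lattice}(1), the order--reversing property from Proposition \ref{lattice}(5), and $\calI(G)$ is itself a lattice by Proposition \ref{lattice}(2). Since $G$ is bounded of exponent $e$, every $G$--admissible indicator has length at most $e$ with finite entries bounded by $e-1$, so $\calI(G)$ is finite.

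The main step is surjectivity. By Proposition \ref{basic}(1) a bounded group equals its own basic subgroup, hence is simply presented and therefore transitive by property 4.2(9). Properties 4.2(8) and 4.2(9) together yield $a\calE=G(\ind(a))$ for every $a\in G$: the inclusion $a\calE\subseteq G(\ind(a))$ comes from $\ind(a)\preceq\ind(af)$, and the reverse from transitivity applied to any $b$ with $\ind(a)\preceq\ind(b)$. Consequently each $H\in\calH$ decomposes as
\[
H=\sum_{a\in H}a\calE=\sum_{a\in H}G(\ind(a)),
\]
a finite sum of indicator subgroups. To conclude $H=G(\boldsigma)$ for a single $\boldsigma\in\calI(G)$, I would show that the set of indicator subgroups is closed under finite joins in $\calH$, reducing by induction to the two--summand case. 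Via Lemma \ref{embodies} each $G(\boldsigma)$ corresponds to a rising path in the Fundamental Matrix $M$; since the entries of $M$ form a lattice (Lemma \ref{lat2}) and each $G(\boldsigma)$ sits inside a fundamental subgroup (Proposition \ref{fundam}(3) together with Proposition \ref{relate}), the join of two rising paths can be computed cell by cell to produce a single rising path realizing $G(\boldsigma)+G(\boldtau)$ as a new indicator subgroup.

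Once surjectivity holds, the lattice operations in $\calH$ pull back to those of $\calI(G)$: meets satisfy $G(\boldsigma)\cap G(\boldtau)=G(\boldsigma\vee\boldtau)$ directly from the defining condition $\boldsigma\preceq\ind(a)$, while joins are governed by the rising--path construction above. Modding $\calI(G)$ out by the congruence $\boldsigma\sim\boldtau$ iff $G(\boldsigma)=G(\boldtau)$ (cf.\ Remark \ref{lat22}) yields an anti--isomorphism $\calI(G)/\!\sim\,\cong\calH$, so $\calI(G)$ determines $\calH$ as a lattice. The main obstacle is the cell--by--cell join construction in the previous paragraph: one must verify that the combined rising path still satisfies Kaplansky's gap condition (so remains $G$--admissible) and that the module--theoretic sum $G(\boldsigma)+G(\boldtau)$, which a priori depends on the homocyclic decomposition of $G$, really coincides with the indicator subgroup thus produced.
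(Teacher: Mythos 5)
Your route is the one the paper leaves implicit (the corollary is stated with no written proof): bounded implies basic, hence simply presented, hence transitive by 4.2(9); transitivity together with 4.2(8) gives $a\calE=G(\ind(a))$; hence every $H\in\calH$ is a sum of finitely many indicator subgroups (finitely many because $\calI(G)$ is finite for bounded $G$); and $\calH$ is then recovered, order reversed, from the pointwise order on $\calI(G)$ after identifying indicators with equal images as in Remark \ref{lat22}. All of that is sound and matches the paper's intent.

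The one genuine gap is the step you flag but do not close: that $G(\boldsigma)+G(\boldtau)$ is again an indicator subgroup, namely $G(\min\{\boldsigma,\boldtau\})$ (pad the shorter indicator with $\infty$). Half of your stated worry is already settled in the paper: Proposition \ref{lattice}(2) proves that the pointwise minimum of two $G$--admissible indicators is $G$--admissible, so Kaplansky's gap condition is not the obstacle. The real obstacle is the containment $G(\min\{\boldsigma,\boldtau\})\leq G(\boldsigma)+G(\boldtau)$: given $a$ with $\ind(a)\succeq\min\{\boldsigma,\boldtau\}$ you must exhibit $a=b+c$ with $\ind(b)\succeq\boldsigma$ and $\ind(c)\succeq\boldtau$, and a cell--by--cell merge of rising paths in the Fundamental Matrix cannot produce this, because a rising path only records which fundamental subgroups contain the elements $p^ia$; it carries no additive decomposition of $a$. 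For bounded $G$ the containment is proved by writing $a=\sum_j a_j$ in a cyclic decomposition $G=\bigoplus_j\la x_j\ra$: each component has a gapless indicator $(h,h+1,\dots,h+k,\infty)$ and satisfies $\ind(a_j)\succeq\ind(a)\succeq\min\{\boldsigma,\boldtau\}$, and a short computation using the strict monotonicity of $\boldsigma$ and $\boldtau$ shows that a gapless indicator lying above the pointwise minimum must already lie above $\boldsigma$ or above $\boldtau$; sorting the components accordingly yields $b$ and $c$. With that lemma supplied, your argument is complete.
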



\subsection{The Fundamental Matrix, Unbounded Case:}

Let $G$ be an unbounded group of length $\lambda$. 
The  Fundamental Matrix of $G$ is the
  $\bbN^+\times\lambda$ matrix $M=M(G)$ with $(n,\kappa)$--entry the fundamental subgroup \[M(n,\kappa)=p^\kappa G[p^n]=\{a\in G\colon \exp(a)\leq n\text{ and }\height(a)\geq\kappa\}\]  for all $n\in\bbN^+$ and $\kappa<\lambda$.  The rows are indexed by $\bbN^+$ bottom to top and   columns  by $\lambda$   left to right. 
  
  Many of the results of  the bounded case, such as Lemma \ref{lat2},   hold true in the unbounded case. For example the entries of the fundamental matrix  are fully invariant subgroups and the fundamental matrix contains the intersections of all finite sets of its entries .
  
Once again, the  Fundamental Matrix $M$   embodies the lattice  $\calI(G)$ of $G$--admissible indicators as follows.  A countable sequence of entries of $M$ of the form \[\boldsigma=   (M(n,\kappa _0),\ M(n+1,\kappa _1),\dots, M(n+k, \kappa _{k})\colon k<\omega,\kappa_k<\lambda)\] where $\kappa _0<\kappa _1<\dots< \kappa _{k-1}<\dots<\lambda$  is called a \textit{rising   sequence}. A rising sequence $\boldsigma$ is \textit{$G$--admissible} if whenever $ \kappa _k+1<\kappa _{k+1}$, the $\kappa_i$ Ulm invariant of $G$ is non--zero.

The initial entry $M(n,\kappa_0)$ is arbitrary, and each subsequent entry comes from the row above and some column to the right. Note that if $\kappa_0<\mu$ for a limit ordinal $\mu\leq\lambda$, then such an infinite rising  sequence always exists. 

\begin{lemma}\label{embodies1} Let $\boldsigma=(\sigma_0,\sigma_1,\dots,\sigma_\kappa,\dots,\infty)$ be a $G$--admissible indicator. Let $\tau=(\sigma_\kappa,\sigma_{\kappa+1},\dots,\sigma_{\kappa+n} )$ be a finite segment of $\boldsigma$ and let $\rho=(\rho_i\colon i\in\bbN^+)$ be a countable subsequence of $\bbN^+$.

 \begin{enumerate}\item There exists $a\in G$ with  $\ind(a)=\tau$;
\item  $ \left(M(\kappa,\sigma_\kappa),\ M(\kappa+1,\sigma_{\kappa+1}),\dots, M(\kappa+n, \sigma_{\kappa+n})\right)$ is a finite  admissible rising   sequence   in $M$. 

 \item  $M$ contains an admissible  rising   sequence $(M(i,\rho_i )\colon  i\in\bbN^+)$; 
 
\item Let  $\left(M(i,\tau_i \colon  i\in\bbN^+)\right)$ be an infinite  admissible rising  sequence   in $M$. Then $\boldsigma =(\tau_i\colon i\in\bbN^+,\infty)$ is a $G$--admissible indicator. 

\end{enumerate}\end{lemma}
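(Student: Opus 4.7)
The plan is to unpack the definitions of rising sequence and $G$--admissible indicator and observe that their admissibility conditions are matched termwise, then appeal to the Fuchs existence theorem to realise finite indicators as indicators of elements. For (1), $\tau$ is strictly increasing as a segment of the indicator $\boldsigma$, and any gap of $\tau$ at position $i$ is a gap of $\boldsigma$ at position $\kappa+i$ with the same ordinal value $\sigma_{\kappa+i}$. The $G$--admissibility of $\boldsigma$ thus makes $\tau$ itself $G$--admissible, and Proposition \ref{lattice} (3), which in turn appeals to \cite[Chapter 10, Lemma 1.1]{Fuchs}, produces an $a\in G$ with $\ind(a)=\tau$.

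For (2), the sequence $(M(\kappa+i,\sigma_{\kappa+i}):i\in[0,n])$ has rows increasing by one at each step and columns $\sigma_\kappa<\sigma_{\kappa+1}<\cdots<\sigma_{\kappa+n}$ strictly increasing, so by the definition of rising sequence in the unbounded case it is one; admissibility transfers directly, since any column jump at position $i$ is a gap of $\boldsigma$ and hence the $\sigma_{\kappa+i}$--Ulm invariant is non--zero by the $G$--admissibility of $\boldsigma$. For (3), I would take $\rho$ to be a strictly increasing enumeration in $\bbN^+$ of indices at which the Ulm invariant of $G$ is non--zero; such an enumeration exists because $G$ is unbounded, so by U5 the basic group $B$ has infinitely many distinct cyclic exponents. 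Then $(M(i,\rho_i):i\in\bbN^+)$ is rising by construction and admissible because every $\rho_k$ preceding a column jump satisfies $u_{\rho_k}\ne 0$.

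For (4), reading backwards from the given infinite admissible rising sequence $(M(i,\tau_i):i\in\bbN^+)$, the strictly increasing column indices give an indicator $\boldsigma=(\tau_1,\tau_2,\dots,\infty)$; the gaps of $\boldsigma$ occur exactly where the rising sequence has column jumps $\tau_i+1<\tau_{i+1}$, and admissibility of the rising sequence then gives $u_{\tau_i}\ne 0$ at those positions, so $\boldsigma\in\calI(G)$. The only non--trivial ingredient is the Fuchs existence theorem underlying (1); everything else is definitional bookkeeping, matching column jumps of rising sequences with ordinal gaps of indicators. The main obstacle I anticipate is simply making the correspondence in (3) unambiguous, since the hypothesis on $\rho$ appears to leave implicit the compatibility between $\rho$ and the non--zero Ulm invariants of $G$.
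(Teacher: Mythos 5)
Your proposal is correct and follows essentially the same route as the paper, which disposes of (1) by citing the Fuchs/Kaplansky existence theorem for elements with a prescribed admissible indicator and treats (2)--(4) as definitional matching of column jumps in rising sequences with gaps in indicators. You supply considerably more detail than the paper's very terse proof, and your explicit repair of (3) --- choosing $\rho$ so that the Ulm invariants are non--zero at the column jumps, which the statement leaves implicit --- is a sensible reading of a genuinely ambiguous hypothesis.
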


\begin{proof} (1) Such elements $a$ of $G$ exist by Kaplansky's Theorem \cite[Chapter n10, Theorem 2.2]{Fuchs}.

(2) and (3) are immediate from the definition of   admissible rising sequence.

(4) follows from the fact that any subsequence of a $G$--admissible indicator is a  $G$--admissible indicator.
\end{proof}
 
The following theorem shows that the Fundamental Matrix determines all $H\in\calH$ of the form $G(\boldsigma)$ where  $\boldsigma$ is an admissible indicator.
:

\begin{theorem} Let $G$ have fundamental matrix $(M(i,j))$ and let $\boldsigma=(\sigma_i\colon i\in\bbN^+)$ be a $G$--admissible sequence.
Then $G(\sigma)=\sum_{i\in\bbN^+}M(i,\sigma_i)$.

\end{theorem}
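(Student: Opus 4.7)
My plan is to establish the equality by proving the two inclusions separately, of which the first is routine and the second carries the real content.

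For $\sum_{i\in\bbN^+} M(i,\sigma_i)\subseteq G(\boldsigma)$, since $G(\boldsigma)$ is a subgroup by Proposition \ref{lattice}(1), it suffices to show $M(i,\sigma_i)\subseteq G(\boldsigma)$ for each $i$. Given $x\in M(i,\sigma_i)$, write $\ind(x)=(\tau_1,\dots,\tau_m,\infty)$ with $m=\exp(x)\leq i$ and $\tau_j=\height(p^{j-1}x)$. Because $p$ does not decrease height, $\tau_j\geq\height(x)\geq\sigma_i$; because $\boldsigma$ is strictly increasing and $j\leq i$, $\sigma_j\leq\sigma_i$. Hence $\sigma_j\leq\tau_j$ for $j\leq m$, and the entries for $j>m$ are vacuous since $p^{j-1}x=0$. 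Therefore $\boldsigma\preceq\ind(x)$ and $x\in G(\boldsigma)$.

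For the reverse inclusion $G(\boldsigma)\subseteq\sum_i M(i,\sigma_i)$, I would induct on $k=\exp(a)$ for $a\in G(\boldsigma)$. When $k=1$, $a\in G[p]\cap p^{\sigma_1}G=M(1,\sigma_1)$ directly. For $k\geq 2$, my strategy is to produce an element $b\in M(k,\sigma_k)$ with $p^{k-1}b=p^{k-1}a$: granted this, $a-b$ has exponent at most $k-1$ and remains in $G(\boldsigma)$ (because $b\in M(k,\sigma_k)\subseteq G(\boldsigma)$ by the first direction), so the inductive hypothesis delivers $a-b\in\sum_i M(i,\sigma_i)$, and consequently $a=b+(a-b)$ lies there as well.

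Constructing $b$ is the technical heart and the principal obstacle. The element $y:=p^{k-1}a$ lies in $p^{\sigma_k}G[p]$, and one must lift it along multiplication by $p^{k-1}$ to some $b\in p^{\sigma_k}G[p^k]$ with $p^{k-1}b=y$. Such a lift need not exist for arbitrary elements of $p^{\sigma_k}G[p]$, because $p^{k-1}$ only sends $p^{\sigma_k}G$ into $p^{\sigma_k+k-1}G$, which in general is strictly smaller than the socle of $p^{\sigma_k}G$. This is exactly where the $G$-admissibility of $\boldsigma$ is indispensable: at any gap of $\boldsigma$ the relevant Ulm invariant $u_{\sigma_k}$ is nonzero, so the Kaplansky existence theorem invoked in Lemma \ref{embodies1}(1) supplies an element $c\in G$ whose indicator realises the segment of $\boldsigma$ starting at $\sigma_k$, and the required $b$ can then be assembled from $c$ by matching its $(k-1)$th power with $y$ inside the socle $p^{\sigma_k}G[p]$. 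The most delicate case will be when $\sigma_k$ is a limit ordinal, where the lift has to be built from partial lifts at each $\nu<\sigma_k$ using the continuity of $p^{k-1}$ in the $p^{\sigma_k}$-adic topology of Section 3.3.
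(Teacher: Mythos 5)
Your first inclusion is correct and complete: an element $x\in M(i,\sigma_i)$ has $\height(p^{j-1}x)\geq\height(x)\geq\sigma_i\geq\sigma_j$ for all $j\leq i$ and $p^{j-1}x=0$ beyond its exponent, so $\boldsigma\preceq\ind(x)$, and $G(\boldsigma)$ is a subgroup by Proposition \ref{lattice}(1). This is also the only direction that the paper's one-line proof (which simply asserts the biconditional "$\height(p^ia)\geq\sigma_i$ for all $i$ iff $a\in\sum_iM(i,\sigma_i)$") can be said to cover.

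The second inclusion is where your argument has a genuine, and in fact unrepairable, gap. You need $b\in M(k,\sigma_k)=p^{\sigma_k}G[p^k]$ with $p^{k-1}b=p^{k-1}a$. But any such $b$ satisfies $p^{k-1}b\in p^{k-1}\left(p^{\sigma_k}G\right)=p^{\sigma_k+k-1}G$, so its existence forces $\height(p^{k-1}a)\geq\sigma_k+k-1$, whereas membership of $a$ in $G(\boldsigma)$ only gives $\height(p^{k-1}a)\geq\sigma_k$. Admissibility and the Kaplansky existence theorem supply elements with prescribed indicators; they cannot supply a $b$ whose existence is excluded by this height count. The obstruction is structural: if $x=\sum_ix_i$ with $x_i\in M(i,\sigma_i)$, then $p^{n-1}x=\sum_{i\geq n}p^{n-1}x_i\in p^{\sigma_n+n-1}G$, so the right-hand side of the theorem lies in $G(\boldsigma')$ with $\sigma'_n=\sigma_n+n-1$, which is in general properly smaller than $G(\boldsigma)$. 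The paper's own Example \ref{bex} witnesses this: for $\boldsigma=(0,1,\infty)$ one has $G(\boldsigma)=G[p^2]=\la a\ra\oplus\la p^2b\ra$, while $M(1,0)+M(2,1)=G[p]+pG[p^2]=\la pa\ra\oplus\la p^2b\ra$, which does not contain $a$; your induction already fails at $k=2$ there, since $pb=pa$ with $\height(b)\geq1$ would force $\height(pa)\geq2$. So the inclusion you are trying to prove is false as the statement is written; the fault lies with the statement (and with the paper's proof, which never addresses this direction), not with your execution of the easy half, but your proposed construction of $b$ cannot be completed.
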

\begin{proof} Let $a\in G$. Then $a\in G(\sigma)$ if and only if for all $i\in\bbN^+,\  \height(p^ia)\geq \sigma_i$ if and only if $a\in \sum_{i\in\bbN^+}H(i,\kappa_i)$.
\end{proof}

\begin{remarks} \begin{enumerate}\item Every $H\in\calH$ is $G(\boldsigma)$ for some $\boldsigma$ if and only if $G$ is transitive.

\item  $G$ is bounded iff $\calH$ is finite, but $\calH$ does not tell us whether $G$ is finite.

\item The length of longest chain and the width of  largest antichain in $\calH$ are finite if and only if $G$ is bounded.
\end{enumerate}
\end{remarks}

\section{The endomorphism ring of $G$}

Current knowledge of the structure of the endomorphism ring $\calE$ of an abelian  $p$--group $G$ is presented in \cite[Chapter 4, \S 20]{KMT} which mainly draws on the results presented in \cite{Fuchs} and \cite{Pierce}.
For the particular case of separable $G$, see \cite{Corn} and \cite{String}.
These references  are mainly concerned with identifying the Jacobson radical $\calJ$ of $\calE$ and the factor ring $\calE/\calJ$. The  main structure theorem  states that  if $G$ has non--zero Ulm invariants $(u_\kappa\colon \kappa<\lambda)$  then $\calE$ is  the split extension of the Jacobson radical  $\calJ$ of $\calE$ by a subring of the  direct product $\prod_{i<\lambda}\calL(u_i)$ where $\calL(u_i)$ is the ring of linear transformations of the $\bbF_p$--space of dimension  $u_i$.  $\calJ$ is charatacterised (\cite[Lemma 4.5]{Pierce}) as the $p$--adic closure of the ideal of endomorphisms which raise finite  heights in the socle,

The paper \cite[Theorem 4.5]{AvS} contains an explicit description, in terms of cardinal invariants,  of the ideal lattice of $\calE$ in   the bounded case. In  \cite{ASZ} the endomorphism ring of a bounded group is represented by a finite digraph whose vertices are the fundamental subgroups and arrows the irreducible endomorphisms. The  ideals are represented by  particular sets of paths in the digraph.

 In contrast, the approach in this paper is to use    the lattice $\calH$ of fully invariant subgroups of $G$ to   describe    certain sublattices of ideals of $\calE$. In the unbounded case, this approach fails to identify such important ideals as the various radicals of $\calE(G)$.

The functors $(\kappa,\,n)\to  p^\kappa G[p^n]$   for ordinals $\kappa$ and natural numbers $n$ defined in \S 3.1 and \S 3.2   for abelian $p$--groups $G$,  can also be applied to the ring $\calE$, even though the additive group of $\calE$ is not a $p$--group when $G$ is unbounded.  Namely, we define by induction $p^{\kappa+1}\calE=p(p^\kappa\calE)$ and $p^\kappa\calE=\bigcap_{\mu<\kappa}p^\mu\calE$ if $\kappa$ is a limit ordinal; and $\calE[p^n]=\{f\in\calE\colon p^nf=0\}$. Clearly, $p^\kappa\calE$ and $\calE[p^n]$ are closed under subtraction. Since endomorphisms do not decrease heights or increase exponents, it is also clear that $p^\kappa\calE$ and $\calE[p^n]$ and their intersection $p^\kappa\calE[p^n]$ are ideals of $\calE$.

In the definitions of  fundamental subgroups and indicator subgroups in $\calH$,  the ranks of the homocyclic components of  $G$ play no r\^ole. but
this is not true of the  ideals of the endomorphism ring $\calE$. For example, let $G$ have infinite  rank;   an endomorphism has rank $r$ if $\rank(Gf)=r$. Then the ring of endomorphisms of  finite rank is an ideal of $\calE$.

This is exemplified for bounded groups in  \cite{AvS}, where it is shown  that ideals of $\calE$ are parametrised by three variables: height, exponent and rank. More precisely, let  $G$ have exponent $e$,   homocyclic components $B_i,\  i\in[k]$ and infinite rank $\rho$. Let $k\in[e],\ n\in [e]$ and let $\mu$ an infinite cardinal $\leq\rho$. Let \[I(\kappa,\, n,\, \mu)=\{f\in\calE\colon Gf\leq p^\kappa
G[p^n], \rank(Gf)\leq\mu\}\]
Theorem 4.5 of   \cite{AvS} states that if $G$ is bounded, then for every ideal $I$ of $\calE,\ I=I(\kappa,\, n,\, \mu)$ for suitable choice of the parameters $(\kappa,\, n,\, \mu)$. The aim of this paper is to prove a similar result for unbounded groups.

\subsection{The lattices of fully invariant subgroups and ideals}

 Let $G$ be an unbounded group with endomorphism ring $\calE$.  Denote by $\calH$ and  $\calId $ the posets under inclusion of fully invariant subgroups of $G$   and   ideals of $\calE$ respectively.  It is straightforward to check that $\calH$ and $\calId$ are complete lattices under inclusion. 
  In this Section, I construct  a correspondence   ${}^\dagger\colon \calH \longleftrightarrow \calId$.

For all $H\in\calH$ and $I\in\calId$, let
  $H^\dagger=\{f\in\calE\colon Gf\leq H\}$ and $I^\dagger=\im(I)=\sum_{f\in I }\im f$.

\begin{lemma}\label{straight}   Let  $H\in\calH$ and $I\in\calId$.   Then
   $H^\dagger\in\calId$ and $I^\dagger\in\calH$.
\end{lemma}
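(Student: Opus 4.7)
The two halves of the lemma are essentially dual, each exploiting one of the defining closure properties (full invariance on the $G$--side, ideal closure on the $\calE$--side) together with the trivial observation that endomorphisms compose on the right of their arguments, so that $G(fh)=(Gf)h$.

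For the first claim, that $H^\dagger\in\calId$, I would start by checking that $H^\dagger$ is closed under subtraction: if $f,g\in H^\dagger$ then $G(f-g)\subseteq Gf+Gg\subseteq H$. For the ideal property, given $f\in H^\dagger$ and $h\in\calE$, the composition $hf$ satisfies $G(hf)=(Gh)f\subseteq Gf\subseteq H$, so $hf\in H^\dagger$; and the composition $fh$ satisfies $G(fh)=(Gf)h\subseteq Hh\subseteq H$, the last inclusion being precisely the full invariance of $H$. This shows $H^\dagger$ is a two-sided ideal.

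For the second claim, that $I^\dagger\in\calH$, first note that $I^\dagger=\sum_{f\in I}Gf$ is a subgroup of $G$ as a sum of subgroups. A typical element $a\in I^\dagger$ has the form $a=\sum_{i=1}^{n} a_i f_i$ with $a_i\in G$ and $f_i\in I$. Given any $h\in\calE$, compute $ah=\sum_i (a_i f_i)h=\sum_i a_i(f_i h)$. Since $I$ is a (right) ideal of $\calE$, each $f_i h\in I$, so each $a_i(f_i h)\in \im(f_i h)\subseteq I^\dagger$. Hence $ah\in I^\dagger$, which is exactly the full invariance condition.

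There is no real obstacle here: both statements follow formally from the definitions, and the only thing to keep track of is that functions act on the right, so the two sides of the ideal condition on $\calE$ correspond to the two inclusions $(Gh)f\subseteq Gf$ and $(Gf)h\subseteq Hh\subseteq H$, and only the second of these invokes the hypothesis that $H$ is fully invariant.
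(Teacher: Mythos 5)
Your proof is correct and follows essentially the same route as the paper, which simply asserts the closure properties that you verify in detail; your explicit tracking of which inclusion uses full invariance of $H$ (namely $(Gf)h\subseteq Hh\subseteq H$) and which uses the ideal property of $I$ is a welcome refinement but not a different argument.
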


\begin{proof}   Since $H$ is a fully invariant subgroup of $G$,    $H^\dagger$ is closed under addition and under left and right multiplication by elements of $\calE$, so is an ideal of $\calE$.

  Since $I$ is an ideal of $\calE$,      $I^\dagger$ is closed under addition, additive inverses 
 and  left and  right multiplication by members of $\calE$, so is a fully invariant subgroup of $G$.
\end{proof}
\begin{remark}\label{re} There is an essential difference between the functions $\dagger\colon \calH\to\calId$ and $\dagger\colon \calId\to\calH$: if $H,\, K\in\calH,\ H^\dagger=K^\dagger$ implies $H=K$ by definition, whereas if $I,\  J\in\calId,\  I^\dagger=J^\dagger$ does not imply any  other relation between $I$ and $J$   than the fact that they have the same image. For example, in our Example \ref{bex}, take $I$ to be multiplication by $p^3$ and $J$ to be the ideal generated by $f\colon a\mapsto pa,\ b\mapsto p^3b$. Then $I\not=J$ but $I^\dagger=G[p]=J^\dagger$.
\end{remark}
 
\begin{proposition}\label{Gal} With the notation above, 

\begin{enumerate}\item   The mappings  $H\mapsto H^\dagger$   and  $I\mapsto I^\dagger$ preserve  order  and arbitrary meets and joins.
 \item For all $H\in\calH$ and all   $I\in\calId$,
\begin{enumerate}
  \item $ H^{\dagger\dagger}\leq H$ and   $ I^{\dagger\dagger} \leq I$;
\item $H^{\dagger\dagger\dagger}= H^\dagger$ and $I^{\dagger\dagger\dagger} = I^\dagger$. 
\end{enumerate}\end{enumerate}
 \end{proposition}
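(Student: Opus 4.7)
The strategy is to set up $(\dagger,\dagger)$ as a monotone Galois connection between the posets $\calH$ and $\calId$ and then read off the proposition from the standard formalism. The underlying adjunction is
\[
I^\dagger \leq H \iff I \leq H^\dagger,
\]
which is verified by noticing that both sides unwind to the single condition that every $f\in I$ satisfies $Gf\leq H$.

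For part (1), monotonicity of both arrows is immediate from the definitions. The intersection preservation $(\bigcap_\alpha H_\alpha)^\dagger = \bigcap_\alpha H_\alpha^\dagger$ and the sum preservation $(\sum_\alpha I_\alpha)^\dagger = \sum_\alpha I_\alpha^\dagger$ are direct set-theoretic checks (and reflect the abstract fact that a right adjoint preserves meets while a left adjoint preserves joins). The complementary preservations — joins in $\calH$ and meets in $\calId$ under $\dagger$ — I would handle by hand, arguing that every $f\in\calE$ with $Gf\leq\sum_\alpha H_\alpha$ decomposes as a sum of pieces landing in the individual $H_\alpha$s (using that each $H_\alpha$ is fully invariant), and symmetrically for meets of ideals.

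Part (2)(a) has an easy half and a hard half. The inequality $H^{\dagger\dagger}\leq H$ is immediate: substituting $I := H^\dagger$ into the adjunction and invoking the tautology $H^\dagger\leq H^\dagger$ gives exactly this. The companion inequality $I^{\dagger\dagger}\leq I$ is the main obstacle. Unwound, it asserts that every $g\in\calE$ with $Gg\leq \sum_{f\in I}Gf$ already lies in the ideal $I$. My plan here is to exploit the two-sided $\calE$-ideal structure of $I$: for each $a\in G$ the element $ga$ admits a finite expression $\sum_i f_i(a_i)$ with $f_i\in I$ and $a_i\in G$, and one must then assemble these pointwise data into a single endomorphism lying in $I$ and coinciding with $g$, by composing the $f_i$ on the left or right with appropriate elements of $\calE$. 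The coherent global assembly is the delicate step, and I anticipate it to lean on the structural facts about $\calE$ recalled at the start of this section (the split extension of the Jacobson radical by a product of linear-transformation rings) or on the transitivity-type results from Section 6.

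For part (2)(b), the two equalities $H^{\dagger\dagger\dagger}=H^\dagger$ and $I^{\dagger\dagger\dagger}=I^\dagger$ follow from (2)(a) combined with the adjunction. Applying the monotone map $\dagger$ to $H^{\dagger\dagger}\leq H$ yields $H^{\dagger\dagger\dagger}\leq H^\dagger$; the reverse inclusion $H^\dagger\leq H^{\dagger\dagger\dagger}$ is the adjunction-derived inequality $I\leq I^{\dagger\dagger}$ (obtained by substituting $H := I^\dagger$ into the adjunction and using $I^\dagger\leq I^\dagger$) specialised to $I := H^\dagger$. The equality $I^{\dagger\dagger\dagger}=I^\dagger$ follows symmetrically: apply $\dagger$ to $I^{\dagger\dagger}\leq I$ for one inclusion and to $I\leq I^{\dagger\dagger}$ for the other.
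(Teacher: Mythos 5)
Your Galois--connection framing is exactly the right skeleton: the adjunction $I^\dagger\leq H\iff I\leq H^\dagger$ does hold, and from it you correctly obtain monotonicity, $(\bigcap_\alpha H_\alpha)^\dagger=\bigcap_\alpha H_\alpha^\dagger$, $(\sum_\alpha I_\alpha)^\dagger=\sum_\alpha I_\alpha^\dagger$, the counit $H^{\dagger\dagger}\leq H$, the unit $I\leq I^{\dagger\dagger}$, and both equalities in (2)(b); this is the same computation the paper performs directly, just organised more cleanly. The genuine gap is precisely the step you single out as ``the main obstacle'': the inclusion $I^{\dagger\dagger}\leq I$ cannot be established by any assembly argument, because it is false in general. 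By definition $I^{\dagger\dagger}=\{g\in\calE\colon Gg\leq I^\dagger\}$ is the \emph{largest} ideal with image contained in $I^\dagger$, so it always contains $I$; equality fails whenever two distinct ideals share the same image, and the paper itself exhibits such pairs (Remark \ref{re}, and in \S 8: for $G$ homocyclic of exponent $n$ and infinite rank $\rho$, the ideal $I(j,\mu)$ with $\mu<\rho$ and $j<n$ has $I(j,\mu)^\dagger=p^jG$, so $I(j,\mu)^{\dagger\dagger}$ contains multiplication by $p^j$, an endomorphism of rank $\rho>\mu$, hence $I(j,\mu)^{\dagger\dagger}\not\leq I(j,\mu)$). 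More bluntly: your target inclusion together with the unit $I\leq I^{\dagger\dagger}$, which you yourself use in (2)(b), would force every ideal to be $\dagger$--closed, flatly contradicting the notion of $\di(H)$ and the later classification of closed ideals. So the clause ``$I^{\dagger\dagger}\leq I$'' in the statement is itself an error --- the intended and usable inequality is $I\leq I^{\dagger\dagger}$ --- and the paper's one--line justification of it (``$I^{\dagger\dagger}$ is the ideal of all endomorphisms which map $G$ to $I^\dagger$'') proves containment in the wrong direction. The right move was to notice this clash and correct the statement, not to plan a proof leaning on transitivity or the radical structure of $\calE$; no such input can rescue a false inclusion.

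A smaller caveat on part (1): the two ``complementary'' preservations --- $(\sum_\alpha H_\alpha)^\dagger=\sum_\alpha H_\alpha^\dagger$ and $(\bigcap_\alpha I_\alpha)^\dagger=\bigcap_\alpha I_\alpha^\dagger$ --- are not consequences of the adjunction, and your proposed by--hand splitting of an $f$ with $Gf\leq\sum_\alpha H_\alpha$ into summands landing in the individual $H_\alpha$ is not justified as stated: full invariance of the $H_\alpha$ gives no decomposition of $f$ itself. Only the easy inclusions $\sum_\alpha H_\alpha^\dagger\leq(\sum_\alpha H_\alpha)^\dagger$ and $(\bigcap_\alpha I_\alpha)^\dagger\leq\bigcap_\alpha I_\alpha^\dagger$ come for free. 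The paper's own argument for these directions is equally incomplete, so this half of (1) remains unproved on both sides; the portions the adjunction genuinely delivers (meets of subgroups, joins of ideals, (2)(a) for $H$, the unit, and (2)(b)) are exactly the ones the rest of the paper actually needs.
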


\begin{proof} (1)    Let $H\leq K\in \calH$.  Then $H^\dagger \leq K^\dagger $. Similarly, if $L\leq J\in\calId$, then $I^\dagger\leq J^\dagger$.

Let $\calS\subseteq\calH$,  and $\calT\subseteq\calId$. Recall that $\sup(\calS)=\sum_{K\in\calS}K\in\calH,\ \inf(\calS)=\bigcap_{K\in\calS}K\in\calH,\  \sup(\calT)=\sum_{J\in\calT}J\in\calId$, and $\inf(\calT)=\bigcap_{J\in\calT}J\in\calId$.

 Let $K\in\calS$. Since  each  $K^\dagger\leq (\sup(\calS))^\dagger,\ \sup_{K\in\calS}(K^\dagger)\leq (\sup(\calS))^\dagger $.  Conversely, let $J\in\sup(S)$. Then there are finitely many $H_i\colon i\in[n]$
in $\calH$ such that $J=\sum_i H_i$ and hence $J^\dagger=\sum_iH_i^\dagger$. Consequently, $J^\dagger\in
\sup_{K\in\calS}(K^\dagger)$.

 Similarly,   $(\inf(\calS))^\dagger= \inf_{K\in\calS}(K^\dagger),\ (\sup(\calT))^\dagger= \sup_{I\in\calT}(I^\dagger)$, and $(\inf(\calT))^\dagger= \inf_{I\in\calT}(I^\dagger)$.

  (2) (a). Since $H^{\dagger\dagger}$ is the image of the ideal of all endomorphisms whose image is contained in $H,\  H^{\dagger\dagger}\leq H$.  Since $I^{\dagger\dagger}$ is the ideal of all endomorphisms which  map $G$ to $I^\dagger,\   I^{\dagger\dagger}\leq I$.

\quad (b) By (1)  and 2(a), $H^\dagger=H^{\dagger\dagger\dagger}$ and $I^\dagger=I^{\dagger\dagger\dagger}$.
\end{proof}

\begin{definition} Let $H\in\calH$ and $I\in\calId$. As in the references \cite{Birkhoff} and \cite{AbS},   we say $H $ ($I$) is \textit{$\dagger$--closed} if $H=H^{\dagger\dagger}$ ($I=I^{\dagger\dagger}$), 
\end{definition}
\begin{proposition}\label{wesay} \begin{enumerate}\item Let $H\in\calH$. The following are equivalent:
\begin{enumerate}\item there exists $I\in\calId$ such that $H=I^\dagger$;

\item  
 $H$ is  $\dagger$--closed;

\item there exists   a unique   $\dagger$--closed $J\in\calId$ such that $H=J^\dagger$.
\end{enumerate}

\item Let $I\in\calId$. The following are equivalent:
\begin{enumerate}\item there exists $H\in\calH$ such that  $I=H^\dagger$;

\item  
 $I$ is $\dagger$--closed;
\item there exists   a unique   $\dagger$--closed $   K\in\calH$ such that $I= K^\dagger$.
\end{enumerate} 
\end{enumerate}
\end{proposition}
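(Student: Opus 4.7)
The plan is to exploit the Galois connection already packaged in Proposition \ref{Gal}; in particular the identities $H^{\dagger\dagger\dagger}=H^\dagger$ and $I^{\dagger\dagger\dagger}=I^\dagger$ together with $H^{\dagger\dagger}\le H$ and $I^{\dagger\dagger}\le I$ give everything needed. Statements (1) and (2) are formally symmetric, so I will prove (1) by the cycle (a) $\Rightarrow$ (b) $\Rightarrow$ (c) $\Rightarrow$ (a) and then indicate that (2) follows by interchanging the roles of $\calH$ and $\calId$.

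For (a) $\Rightarrow$ (b), suppose $H=I^\dagger$ for some $I\in\calId$. Applying $\dagger\dagger$ and using Proposition \ref{Gal}(2)(b), $H^{\dagger\dagger}=I^{\dagger\dagger\dagger}=I^\dagger=H$, so $H$ is $\dagger$--closed. For (b) $\Rightarrow$ (c), assume $H=H^{\dagger\dagger}$ and set $J=H^\dagger\in\calId$. Then $J^\dagger=H^{\dagger\dagger}=H$ by hypothesis, and $J^{\dagger\dagger}=H^{\dagger\dagger\dagger}=H^\dagger=J$, so $J$ itself is $\dagger$--closed. Uniqueness is the crucial observation: if $J'\in\calId$ is $\dagger$--closed and satisfies $J'^\dagger=H$, then $J'=J'^{\dagger\dagger}=H^\dagger=J$. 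Finally (c) $\Rightarrow$ (a) is immediate since the witness $J$ provided by (c) is itself an ideal.

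For (2) the same cycle works verbatim, reading $I$ for $H$ and $H$ for $J$ throughout: if $I=H^\dagger$ then $I^{\dagger\dagger}=H^{\dagger\dagger\dagger}=H^\dagger=I$; if $I$ is $\dagger$--closed then $K:=I^\dagger\in\calH$ satisfies $K^\dagger=I^{\dagger\dagger}=I$ and is itself $\dagger$--closed by Proposition \ref{Gal}(2)(b); and uniqueness of such a $K$ among $\dagger$--closed members of $\calH$ again follows by applying $\dagger$ once more.

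I do not anticipate any real obstacle: once the four identities in Proposition \ref{Gal}(2) are in hand, the proof is a formal manipulation of a Galois connection, and the only content beyond that is pointing out that the closed operator $H\mapsto H^\dagger$ lands in the $\dagger$--closed part of $\calId$ (and symmetrically), which is exactly what Proposition \ref{Gal}(2)(b) asserts.
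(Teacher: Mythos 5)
Your proof is correct and follows essentially the same route as the paper: both argue the cycle (a)$\Rightarrow$(b)$\Rightarrow$(c)$\Rightarrow$(a) using the identities $H^{\dagger\dagger\dagger}=H^\dagger$ and $I^{\dagger\dagger\dagger}=I^\dagger$ from Proposition \ref{Gal}, with the witness $J=H^\dagger$ and uniqueness via $L=L^{\dagger\dagger}=H^\dagger=J$. If anything, your write-up of (a)$\Rightarrow$(b) is slightly cleaner than the paper's, which starts from ``Let $I=H^\dagger$'' rather than from the hypothesis $H=I^\dagger$.
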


\begin{proof} (1)  $(a) \Rightarrow (b)$ Let $I=H^\dagger $. Then $H^{\dagger \dagger } =I^\dagger =H$. 

$(b) \Rightarrow (c)$ Let $J=H^\dagger $. Then $J^\dagger =H^{\dagger \dagger }=H$, so $J^{\dagger \dagger }=H^\dagger =J$.
Suppose also $H=L^\dagger $ with $L$ closed. Then $L= L^{\dagger \dagger }=H^\dagger = J$.

$(c) \Rightarrow (a)$ is clear

(2)  The proofs of each part are strictly analogous to those of (1).
 \end{proof}

 \begin{notation} Denote by $\ov{\calH}$ and $\ov{\calId}$ the posets of   $\dagger$--closed subgroups of $G$ and ideals of $\calE$ respectively.
\end{notation}   

\begin{proposition} \label{proof}
\begin{enumerate}\item    $\ov{\calH}$ and 
$\ov{\calId}$ are complete lattices under inclusion;
\item $H\in  \ov{\calH}$ if and only if $ H^\dagger\in\ov{\calId}$; 
\item $I\in \ov{\calId}$ if and only if $ I^\dagger\in\ov{\calH}$;
 
 \item The mapping $H\mapsto H^\dagger\colon \ov{\calH}\to\ov{\calId}$ is a complete  lattice  isomorphism with inverse $I\mapsto I^\dagger\colon \ov{\calId}\to\ov{\calH}$.

\end{enumerate}
\end{proposition}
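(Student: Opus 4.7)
The plan is to deduce Proposition \ref{proof} formally from the Galois-style machinery already assembled in Propositions \ref{Gal} and \ref{wesay}. For item (1), I would first verify that ${}^{\dagger\dagger}$ is a kernel (coclosure) operator on the complete lattice $\calH$: monotonicity follows from two applications of Proposition \ref{Gal}(1); contractivity, $H^{\dagger\dagger} \leq H$, is Proposition \ref{Gal}(2)(a); and idempotence, $(H^{\dagger\dagger})^{\dagger\dagger} = (H^{\dagger\dagger\dagger})^\dagger = (H^\dagger)^\dagger = H^{\dagger\dagger}$, follows from a double use of Proposition \ref{Gal}(2)(b). With these in hand, the standard fact that the fixed-point set of a kernel operator on a complete lattice is itself a complete lattice gives $\ov{\calH}$: joins of closed elements remain closed in $\calH$ (since $H_\alpha = c(H_\alpha) \leq c(\sum H_\alpha) \leq \sum H_\alpha$ forces equality when $c = {}^{\dagger\dagger}$), while meets in $\ov{\calH}$ are obtained by applying ${}^{\dagger\dagger}$ to the $\calH$-intersection. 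The identical argument applied to ${}^{\dagger\dagger}$ on $\calId$ produces the corresponding complete lattice structure on $\ov{\calId}$.

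For items (2) and (3), the essential observation is once more Proposition \ref{Gal}(2)(b): $(H^\dagger)^{\dagger\dagger} = H^{\dagger\dagger\dagger} = H^\dagger$, so that $H^\dagger$ is automatically $\dagger$-closed regardless of $H$. Hence the $\dagger$ map always lands in $\ov{\calId}$, and in particular restricts to a map $\ov{\calH} \to \ov{\calId}$; the dual statement for $I^\dagger$ handles (3). Both directions of the stated biconditionals reduce to this observation combined with Proposition \ref{wesay}(1), which characterises the closed elements as images of the dual map.

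For item (4), the restricted maps $H \mapsto H^\dagger$ and $I \mapsto I^\dagger$ are monotone by Proposition \ref{Gal}(1) and mutual inverses on the closed sublattices: for $H \in \ov{\calH}$, closedness gives $(H^\dagger)^\dagger = H^{\dagger\dagger} = H$, and symmetrically for $I \in \ov{\calId}$. An order-preserving bijection between complete lattices with order-preserving inverse is automatically a complete lattice isomorphism, because suprema and infima are determined by the order and are therefore preserved in both directions. I do not expect a substantive obstacle anywhere in this argument; the only mild care needed is to remember that meets in $\ov{\calH}$ are not the $\calH$-intersections themselves but their ${}^{\dagger\dagger}$-closures, and then to check that these meets transport correctly through $\dagger$, which is immediate from the meet-preservation clause of Proposition \ref{Gal}(1) together with the identity $(X^\dagger)^{\dagger\dagger} = X^\dagger$.
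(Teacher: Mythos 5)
Your proposal is correct and rests on the same foundation as the paper's proof, namely Propositions \ref{Gal} and \ref{wesay}, but it packages the argument slightly differently. For part (1) the paper simply observes that $\ov{\calH}$ and $\ov{\calId}$ are closed under the ambient meets and joins of $\calH$ and $\calId$ (which does follow from Proposition \ref{Gal}(1), since e.g.\ $(\bigcap_\alpha H_\alpha)^{\dagger\dagger}=\bigcap_\alpha H_\alpha^{\dagger\dagger}=\bigcap_\alpha H_\alpha$ when each $H_\alpha$ is closed), whereas you invoke the general fixed-point theorem for the kernel operator ${}^{\dagger\dagger}$; your route needs less of Proposition \ref{Gal}(1) (monotonicity suffices) at the cost of allowing meets in $\ov{\calH}$ to differ from $\calH$-intersections, a complication that in fact does not arise here. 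For part (4) the paper computes the images of meets and joins directly from \ref{Gal}(1), while you use the cleaner abstract fact that a monotone bijection with monotone inverse between complete lattices preserves all suprema and infima; this is the more robust argument, since it is insensitive to how meets in the closed sublattices are formed. On parts (2) and (3) you and the paper are in the same position: the identity $H^{\dagger\dagger\dagger}=H^\dagger$ shows that $H^\dagger$ (resp.\ $I^\dagger$) is \emph{always} $\dagger$--closed, which gives the forward implications and the restriction of $\dagger$ to maps $\ov{\calH}\to\ov{\calId}$ and back, but makes the stated converses vacuous as biconditionals; this is a feature of the statement itself rather than a defect of your argument, and the paper's own proof ("immediate from Proposition \ref{Gal}(2)") establishes no more.
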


\begin{proof}(1) Since $\calH$ and $\calId$ are complete lattices, it suffices to show that their sublattices $  \calI^\dagger$ and $\ov{\calH}$ are closed under arbitrary meets and joins. But this follows from Proposition \ref{Gal} (1).

(2) and (3) These follow immediately from Proposition \ref{Gal} (2).

 (4) Let $\calS\subseteq\ov{\calH}$, let $U=\bigcap\calS$ and $V-\sum\calS$. Then $U^\dagger=\bigcap_{S\in\calS}S^\dagger$ and $V^\dagger=\sum_{S\in\calS } S^\dagger$.

 Similarly, if $\calT\subseteq \ov{\calId}$ let  $W=\bigcap\calT$ and $X-\sum\calT$.Then $W^\dagger=\bigcap_{T\in\calT}T^\dagger$ and $X^\dagger=\sum_{T\in\calT } T^\dagger$.
 
 This shows that  the mappings are complete lattice isomorphisms.\end{proof}

 The next task is to identify  the   $\dagger$--closed subgroups and ideals.   The case of $\ov{\calH}$ is straightforward:  we shall see that $H\in\calH$ is $\dagger$--closed if and only if   $H$ is an indicator subgroup.  However, $\ov{\calId}$ is more complicated; it will be shown that an ideal $I$ is $\dagger$--closed if and only if $I$ is maximal in the set of ideals having a given indicator subgroup as image.
 
 \subsection{$\dagger$--closed subgroups of $G$}

 I first show that fundamental subgroups are $\dagger$--closed. Recall  from Proposition \ref{wesay}, that for any fully invariant group or ideal $X$, if there exists an ideal $Y$ such that $X=Y^\dagger$ then $X$ is $\dagger$--closed.
 \begin{lemma}\label{fun} For  all   groups $G$ and all $n\in\bbN$,
 \begin{enumerate}\item   $p^n\calE^\dagger=p^nG$ and $p^nG^\dagger=p^n\calE$. 
 
 \item  $\calE[p^n]^\dagger=G[p^n]$ amd $G[p^n]^\dagger=\calE[p^n]$.
\end {enumerate}
 \end{lemma}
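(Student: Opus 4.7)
The plan is to base (1) on two simple identities, and to settle (2) by unwinding definitions together with one small construction.

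For (1), I would first establish $\calE^\dagger = G$ and $G^\dagger = \calE$: the former because $\im(1_G) = G$ already appears as a summand of $\calE^\dagger = \sum_{f \in \calE} \im(f)$ and every other summand is contained in $G$, the latter directly from $G^\dagger = \{f \in \calE : Gf \leq G\} = \calE$. Multiplying each through by $p^n$ yields both equations of (1) at once; the same observation also gives the bracketed reading $(p^n\calE)^\dagger = p^n\calE^\dagger = p^n G$ if one prefers that parsing.

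For part (2), the identity $G[p^n]^\dagger = \calE[p^n]$ unwinds as $f \in G[p^n]^\dagger$ iff $Gf \leq G[p^n]$ iff $p^n(Gf) = 0$ iff $p^n f = 0$ iff $f \in \calE[p^n]$, and the inclusion $\calE[p^n]^\dagger \leq G[p^n]$ is immediate because $p^n f = 0$ forces $\im(f) \leq G[p^n]$. The heart of the lemma is the reverse inclusion $G[p^n] \leq \calE[p^n]^\dagger$: for each $a \in G[p^n]$ I must produce a witness $f \in \calE[p^n]$ with $a \in \im(f)$. The plan is to use the basic subgroup. Because $G$ is unbounded, $B = \bigoplus_i B_i$ has a homocyclic summand $B_i$ of exponent $n_i \geq n$; I pick a cyclic summand $\langle b\rangle$ of $B_i$. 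Since $B$ is pure in $G$ by property B1 and purity passes through summands, $\langle b\rangle$ is a bounded pure subgroup of $G$, hence a direct summand by Kulikov's theorem, say $G = \langle b\rangle \oplus K$. Define $f$ by $bf = a$ and $f|_K = 0$: the inequality $\exp(a) \leq n \leq \exp(b)$ makes $f$ a well-defined endomorphism, $p^n a = 0$ gives $p^n f = 0$, and $a = bf$ lies in $\im(f)$.

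The main obstacle, mild as it is, is the construction of $f$ above: without unboundedness there need not be a cyclic summand of $B$ of large enough exponent to receive $a$, and without Kulikov's theorem on bounded pure subgroups we could not guarantee that such a summand of $B$ is also a summand of $G$ (which is what lets us define $f$ freely on $\langle b\rangle$ and kill it on a complement).
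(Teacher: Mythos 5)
Your part (2) is correct and, at the one nontrivial point, takes a genuinely different route from the paper. To get $G[p^n]\leq\calE[p^n]^\dagger$ the paper splits $G$ as (maximal $p^n$--bounded pure summand) $\oplus\; p^{n+1}G$ and projects, asserting that the given $a$ lies in the bounded summand; you instead map the generator of a single cyclic summand of sufficiently large exponent onto $a$. Your construction is cleaner and avoids the paper's unjustified claim that $a$ itself sits inside the bounded summand (an arbitrary $a\in G[p^n]$ may have a component in the complement, so the paper's projection only returns part of $a$). Likewise your direct unwinding of $G[p^n]^\dagger=\calE[p^n]$ from $p^n(Gf)=G(p^nf)$ replaces the paper's detour through Proposition \ref{wesay}. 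One repair: the lemma quantifies over all groups, so do not lean on unboundedness --- if $\exp(G)\leq n$ the inclusion is trivial since then $\calE[p^n]=\calE$, and otherwise the top homocyclic component of $B$ (or, for unbounded $G$, any sufficiently high $B_i$) supplies a cyclic summand of exponent at least $\exp(a)$, which is all your argument needs.

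The gap is in part (1), in the identity $p^nG^\dagger=p^n\calE$. ``Multiplying $G^\dagger=\calE$ through by $p^n$'' proves only $p^n(G^\dagger)=p^n\calE$, a tautology. The paper intends, and later uses (see Proposition \ref{fun1}), the bracketed reading $(p^nG)^\dagger=\{f\in\calE\colon Gf\leq p^nG\}=p^n\calE$; it obtains this from $(p^n\calE)^\dagger=p^nG$ via Proposition \ref{wesay}(1)(c), not by distributing $p^n$. You correctly note the parsing ambiguity for the first identity but then silently adopt the easy parsing for the second, which is exactly where the content lives: the inclusion $p^n\calE\leq(p^nG)^\dagger$ is clear, but the reverse inclusion asserts that every endomorphism with image inside $p^nG$ is $p^n$ times an endomorphism, and nothing you wrote addresses this. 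It is in fact the delicate direction --- for $G=\bbZ(p)\oplus\bbZ(p^2)=\la a\ra\oplus\la b\ra$, the map $a\mapsto pb,\ b\mapsto 0$ has image in $pG$ but is not of the form $pg$, since that would force $p(ag)=pb\neq 0$ while $pa=0$ --- so you must either state explicitly that you are proving only the $p^n(G^\dagger)$ reading, or follow the paper's appeal to Proposition \ref{wesay} and confront what that argument actually delivers.
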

 
 \begin{proof} (1)  Let $a\in p^n\calE^\dagger$, so for all $f\in  p^n\calE$, there exists $b\in G$ such that $bf=a$.  In particular, take $f$ to be multiplication by $p^n$, so $a=p^nb\in p^nG$.  
  Conversely, let $a=p^nb\in p^nG$. Then for all $f\in\calE,\ af=b(p^nf)$ so $a\in p^n\calE^\dagger$.
  Consequently, $p^n\calE^\dagger=p^nG$.  Hence by Proposition \ref{wesay} (1) (c), $p^nG^\dagger=p^n\calE$.

 (2) Let $a\in\calE[p^n]^\dagger$. Then there exists $f\in\calE[p^n]$ and $b\in G$ such that $bf=a$. Since $p^nf=0,\ a\in G[p^n]$.  
  Conversely, let $a\in G[p^n]$. Then $G$ has a decomposition 
 $G=   B \oplus p^{n+1}G$ where $B$ is the maximal $p^n$--bounded pure subgroup of $G$ and $a\in B$. Let $f$ be the identity on $B$ and zero on $p^{n+1}G$. Then $f\in\calE[p^n]$ and $af=a$.
 Consequently, $\calE[p^n]^\dagger=G[p^n]$.  Hence by Proposition \ref{wesay}, (2) (c), $G[p^n]^\dagger= \calE[p^n]$. 
 \end{proof}
 
 Lemma \ref{fun}(1) can be extended to infinite ordinals as follows:
 \begin{proposition}\label{fun1} For all groups $G$ and and ordinals $\kappa,\ p^\kappa G^\dagger= p^\kappa\calE$ and $p^\kappa\calE^\dagger=p^\kappa G$.
 
 \end{proposition}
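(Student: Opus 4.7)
The plan is to proceed by transfinite induction on $\kappa$, with the finite case $\kappa < \omega$ supplied by Lemma \ref{fun}. The key observation is that the $\dagger$--correspondence interacts naturally with both clauses of the recursive definition of $p^\kappa\calE$ and $p^\kappa G$: successor steps via the ``multiply by $p$'' construction, and limit steps via the preservation of intersections already recorded in Proposition \ref{Gal}(1).

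At a successor ordinal $\kappa+1$, assuming the result for $\kappa$, I would first compute $(p^{\kappa+1}\calE)^\dagger$ directly. Because $p^{\kappa+1}\calE = p\cdot p^\kappa\calE$ by definition, every element of $p^{\kappa+1}\calE$ has the form $pg$ with $g\in p^\kappa\calE$, and $\im(pg) = p\cdot\im(g)$, so
\[
(p^{\kappa+1}\calE)^\dagger = \sum_{g\in p^\kappa\calE} p\cdot\im(g) = p\cdot(p^\kappa\calE)^\dagger = p\cdot p^\kappa G = p^{\kappa+1}G
\]
by the inductive hypothesis. For the companion equality $(p^{\kappa+1}G)^\dagger = p^{\kappa+1}\calE$ I would copy the finite-case strategy of Lemma \ref{fun}: since $p^{\kappa+1}G$ equals the $\dagger$--image of an ideal, it is $\dagger$--closed, so Proposition \ref{wesay}(1)(c) pins down a unique $\dagger$--closed ideal mapping to it under $\dagger$, which must be $p^{\kappa+1}\calE$.

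At a limit ordinal $\kappa$ the definitions give $p^\kappa\calE = \bigcap_{\mu<\kappa} p^\mu\calE$ and $p^\kappa G = \bigcap_{\mu<\kappa} p^\mu G$. Since $\dagger$ preserves arbitrary meets on both sides by Proposition \ref{Gal}(1), the inductive hypothesis at all smaller $\mu$ yields
\[
(p^\kappa\calE)^\dagger = \bigcap_{\mu<\kappa}(p^\mu\calE)^\dagger = \bigcap_{\mu<\kappa} p^\mu G = p^\kappa G,
\]
and symmetrically $(p^\kappa G)^\dagger = \bigcap_{\mu<\kappa}(p^\mu G)^\dagger = p^\kappa\calE$, so the limit case is essentially formal.

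The delicate point is the second equality at a successor step: the inclusion $p^{\kappa+1}\calE \subseteq (p^{\kappa+1}G)^\dagger$ is immediate, but the reverse inclusion asks, directly, that any $f$ with $Gf \leq p^{\kappa+1}G$ factor as $f = pg$ for some $g\in p^\kappa\calE$, a transfinite lifting problem over the map $p\colon G \to pG$ that is not transparent by any explicit construction. My strategy is to avoid exhibiting such a $g$ altogether and invoke the $\dagger$--closure uniqueness of Proposition \ref{wesay} on the already-established identity $(p^{\kappa+1}\calE)^\dagger = p^{\kappa+1}G$, exactly as is done in the proof of Lemma \ref{fun}; once this is accepted, the remainder of the induction is bookkeeping.
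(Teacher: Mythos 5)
Your induction has the same skeleton as the paper's: the limit step is disposed of by the meet--preservation claimed in Proposition \ref{Gal}(1), and the successor step is handled by computing one of the two equalities directly and extracting the other from Proposition \ref{wesay}. The one real difference is which half you compute directly. You establish $(p^{\kappa+1}\calE)^\dagger=p^{\kappa+1}G$ from $\im(pI)=p\,\im(I)$, which is sound; the paper instead asserts $(p\cdot p^\mu G)^\dagger=p\,(p^\mu G)^\dagger$ --- exactly the lifting identity you rightly call delicate --- and then obtains the companion equality from \ref{wesay}. So your direct half is the better of the two, but both arguments end up leaning on \ref{wesay} for the equality $(p^{\kappa+1}G)^\dagger=p^{\kappa+1}\calE$.

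That is where the genuine gap lies, and it cannot be routed around as you propose. Applied to $H=p^{\kappa+1}G=(p^{\kappa+1}\calE)^\dagger$, Proposition \ref{wesay}(1)(c) says only that there is a unique $\dagger$--closed ideal $J$ with $J^\dagger=H$, and its proof identifies $J$ as $H^\dagger=(p^{\kappa+1}G)^\dagger$. To conclude that this $J$ is $p^{\kappa+1}\calE$ you must already know that $p^{\kappa+1}\calE$ is $\dagger$--closed, i.e.\ that $(p^{\kappa+1}G)^\dagger=p^{\kappa+1}\calE$ --- precisely the statement under proof. The circularity is not a presentational defect, because the inclusion $(p^{\kappa+1}G)^\dagger\subseteq p^{\kappa+1}\calE$ can actually fail: let $G=\la a\ra\oplus\la b\ra$ with $\exp(a)=1$ and $\exp(b)=2$, and let $f\in\calE$ be given by $af=pb,\ bf=0$. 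Then $Gf\leq pG$, so $f\in (pG)^\dagger$, but $f=pg$ would force $pb=af=(pa)g=0$; hence $f\notin p\calE$ and $\Hom(G,pG)\neq p\calE$. The same example undermines the base case you import from Lemma \ref{fun}(1) and the paper's own successor step, which carry the identical appeal to \ref{wesay}. Any correct treatment must confront the lifting problem directly --- the obstruction is exactly the existence of endomorphisms that raise heights in the socle, the ones generating the radical in Pierce's description quoted in \S 8 --- rather than sidestep it through the uniqueness clause of \ref{wesay}.
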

 \begin{proof} We have seen that both equalities hold for finite $\kappa$. Suppose both hold for all $\mu<\kappa$. If $\kappa$ is a limit, then $p^\kappa G^\dagger=\left(\bigcap_{\mu<\kappa}p^\mu G\right)^\dagger=  \bigcap_{\mu<\kappa}(p^\mu G)^\dagger = \bigcap_{\mu<\kappa}(p^\mu \calE)  =\left(\bigcap_{\mu<\kappa}p^\mu \calE) \right)=p^\kappa \calE$. That $p^\kappa\calE^\dagger= p^\kappa G$ follows from Proposition \ref{wesay}.
 
 If $\kappa=\mu+1$ then $\left(p^\kappa G\right)^\dagger=p(p^\mu G)^\dagger=p(p^\mu\calE)=p^\kappa\calE$. By Proposition \ref{wesay} again, $p^\kappa\calE^\dagger=p^\kappa G$.
 \end{proof}
\begin{corollary}\label{fgclosed} Fundamental subgroups and  indicator subgroups   are $\dagger$--closed.
\end{corollary}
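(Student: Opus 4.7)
The plan is to exhibit each fundamental or indicator subgroup explicitly as the image under $\dagger$ of a concrete ideal of $\calE$; then Proposition \ref{wesay}(1), in the form $(a)\Rightarrow(b)$, will immediately force $\dagger$--closedness.

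First I would handle the fundamental subgroups. By definition $p^\kappa G[p^n]=p^\kappa G\cap G[p^n]$, and the $\dagger$--correspondence preserves arbitrary meets by Proposition \ref{Gal}(1). Combining Proposition \ref{fun1} with Lemma \ref{fun}(2), the ideal $p^\kappa\calE\cap\calE[p^n]$ of $\calE$ satisfies
\[
(p^\kappa\calE\cap\calE[p^n])^\dagger \;=\; (p^\kappa\calE)^\dagger\cap(\calE[p^n])^\dagger \;=\; p^\kappa G\cap G[p^n] \;=\; p^\kappa G[p^n].
\]
Hence $p^\kappa G[p^n]=I^\dagger$ for the ideal $I=p^\kappa\calE\cap\calE[p^n]$, and Proposition \ref{wesay}(1) gives that $p^\kappa G[p^n]$ is $\dagger$--closed.

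Next I would handle the indicator subgroups. For a $G$--admissible indicator $\boldsigma=(\sigma_i\colon i\in\bbN^+,\infty)$, the theorem at the end of \S 7.2 expresses
\[
G(\boldsigma)\;=\;\sum_{i\in\bbN^+}M(i,\sigma_i)\;=\;\sum_{i\in\bbN^+}p^{\sigma_i}G[p^i],
\]
which is a sum of fundamental subgroups. Since $\dagger$ preserves arbitrary joins by Proposition \ref{Gal}(1), and each summand $p^{\sigma_i}G[p^i]$ has just been shown to equal $(p^{\sigma_i}\calE\cap\calE[p^i])^\dagger$, I can write
\[
G(\boldsigma)\;=\;\sum_{i\in\bbN^+}\bigl(p^{\sigma_i}\calE\cap\calE[p^i]\bigr)^\dagger\;=\;\Bigl(\sum_{i\in\bbN^+}\bigl(p^{\sigma_i}\calE\cap\calE[p^i]\bigr)\Bigr)^\dagger.
\]
Thus $G(\boldsigma)=J^\dagger$ for the ideal $J=\sum_{i}\bigl(p^{\sigma_i}\calE\cap\calE[p^i]\bigr)$, and again Proposition \ref{wesay}(1) finishes the proof.

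The only mild point of care is the handling of the bounded case, where the indicator is a finite sequence; but in that setting $G(\boldsigma)$ is still a finite join of fundamental subgroups $M(i,\sigma_i)$ (cf.\ Proposition \ref{relate} and the enumeration following Example \ref{bex}), so exactly the same argument applies with a finite sum in place of the countable one. No other obstacle appears, because all the real work has already been done in Proposition \ref{Gal}, Proposition \ref{wesay}, Lemma \ref{fun}, and Proposition \ref{fun1}: the corollary is simply the statement that $\dagger$ commutes with meets and joins of the two basic families.
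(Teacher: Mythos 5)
Your proposal is correct and follows essentially the same route as the paper: both arguments reduce to Lemma \ref{fun} and Proposition \ref{fun1} for $G[p^n]$ and $p^\kappa G$, and then invoke the meet/join preservation of $\dagger$ (Proposition \ref{Gal}(1), equivalently the completeness of $\ov{\calH}$ in Proposition \ref{proof}) together with the expression of $G(\boldsigma)$ as a sum of fundamental subgroups. You merely make the witness ideals $p^\kappa\calE\cap\calE[p^n]$ explicit, which the paper leaves implicit.
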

\begin{proof} By Lemmas \ref{fun} and \ref{fun1},  all $G[p^n]$ and   $p^\kappa G$ are  $\dagger$--closed. Since the lattice of closed subgroups is closed under arbitary intersections and sums,  fundamental subgroups $p^\kappa G[p^n]$ are  $\dagger$--closed, and   for all admissible $\boldsigma$, the indicator group $G(\boldsigma)$ is   $\dagger$--closed.
 \end{proof}
\begin{corollary}\label{trgc} If $G$ is simply presented, then all fully invariant subgroups are $\dagger$--closed.
\qed\end{corollary}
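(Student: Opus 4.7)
The plan is to chain together two facts already established in the paper. First, since simply presented groups are transitive (Section 6, item (9)), the Remarks following the Theorem in Section 7.2, item (1), tell us that every $H\in\calH$ has the form $H=G(\boldsigma)$ for some $G$--admissible indicator $\boldsigma$; that is, every fully invariant subgroup of $G$ is an indicator subgroup. Second, Corollary \ref{fgclosed} asserts that every indicator subgroup is $\dagger$--closed.

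Combining these two observations immediately gives the conclusion: an arbitrary $H\in\calH(G)$ equals $G(\boldsigma)$ for some admissible $\boldsigma$, and $G(\boldsigma)=G(\boldsigma)^{\dagger\dagger}$ by Corollary \ref{fgclosed}, so $H=H^{\dagger\dagger}$.

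There is essentially no obstacle here; the only subtlety is making sure the transitivity of simply presented groups is invoked correctly, since the equivalence \emph{every $H\in\calH$ is an indicator subgroup iff $G$ is transitive} is the crucial bridge. One might worry about whether the trivial subgroup $0$ and the whole group $G$ fit the framework, but $0=G(\infty)$ and $G=G(0,1,2,\dots,\infty)$ (the latter for unbounded $G$, using the smallest admissible indicator), so these are covered without issue. Thus the proof is a one-line citation chain.
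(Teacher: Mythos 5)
Your argument is correct and is exactly the citation chain the paper intends (the corollary is stated with no written proof): transitivity of simply presented groups plus the remark that every fully invariant subgroup of a transitive group is an indicator subgroup, followed by Corollary \ref{fgclosed}. Nothing further is needed.
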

  
\subsection{Dagger--closed ideals  of $\calE$} 

If    $\calE$ contains distinct ideals $I$ and $J$ such that $I^\dagger=J^\dagger$, by Proposition \ref{wesay}    not both $I$ and $J$ can  be $\dagger$--closed. So we first identify for which groups  this occurs and then determine how to identify $\dagger$--closed ideals. Some results are immediate: 
\begin{enumerate} \item  Let $G$ be homocyclic of exponent $n$ and   finite rank $m$,   then  $I\in\calId$ if and only if there exists $k\in[n]$ such that $I=p^k\calE$ and $I^\dagger=p^kG$. Hence $\calId$ is a chain of length $n+1$ and all ideals are $\dagger$--closed.

\item Let $G$ be homocyclic of exponent $n$ and infinite rank $\rho$.  Then 
for all integers $j\leq n$ and for all infinite cardinals $\mu\leq\rho$ the set \[I(j,\,\mu)=\{f\in p^j\calE\colon \rank(Gf)\leq\mu\}\]
is an ideal of $\calE$ whose image is $p^jG$. Hence if $\mu<\rho,\ I(j,\mu)$  is not $\dagger$--closed, but $I(j,\rho)$ is $\dagger$--closed.

\item Suppose $G$ of rank $\geq 2$ is not homocyclic. Then $G$ has a decomposition $G=\la a\ra\oplus\la b\ra\oplus C$ where $\exp(a)=n,\ \exp(b)=m$ with $0<n<m$.  There exist $f,\ g\in\calE$ such that $Cf=0=Cg,\ af=p^{m-1}	b=bf$,  while $ag=0$ and	$bg=p^{m-1}b$.

Let $I_f$ be the ideal generated by $f$ and $I_g$ the ideal generated by $g$. Then $I_f\ne I_g$ since they have different kernels, but $(I_f)^\dagger=\la p^{m-1}b\ra= (I_g)^\dagger$.

\end{enumerate}

In general then, if $G$ is not finite homocyclic, ideals of $\calE$ are not distinguished by their $\dagger$--closure. 
\begin{notation} Let $G$ be a group and $\calH$ the lattice of fully invariant subgroups of $G$. For all $H\in\calH$, the \textit{$\dagger$--inverse of $H$}, 
$ \di(H)=\{I\in\calId\colon I^\dagger=H\}$.
\end{notation}

 \begin{lemma}\label{edagclass} For all $H\in\calH,\ \di(H)$ is closed under sums  and $\sum_{I\in \di(H)} I$  is the only $\dagger$--closed ideal in $\di(H)$.
\end{lemma}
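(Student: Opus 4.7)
The plan is to exploit the fact, recorded in Proposition \ref{Gal}(1), that the map ${}^\dagger\colon\calId\to\calH$ preserves arbitrary joins. For closure of $\di(H)$ under sums, given any family $\{I_\alpha\}\subseteq \di(H)$, compute
\[
\Bigl(\sum_\alpha I_\alpha\Bigr)^\dagger \;=\; \sum_\alpha I_\alpha^\dagger \;=\; \sum_\alpha H \;=\; H,
\]
so $\sum_\alpha I_\alpha\in \di(H)$. In particular, $J:=\sum_{I\in\di(H)} I$ is itself a member of $\di(H)$, and by construction is the unique maximum of $\di(H)$.

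Next I would identify $J$ explicitly as $H^\dagger$. Any $I\in\di(H)$ consists of endomorphisms $f$ with $\im f\leq I^\dagger=H$, hence $f\in H^\dagger$; so $I\leq H^\dagger$ for every $I\in\di(H)$, giving $J\leq H^\dagger$. For the reverse containment I need $H^\dagger\in\di(H)$, i.e.\ $(H^\dagger)^\dagger=H$. The hypothesis that $\di(H)$ is non-empty (otherwise the assertion is vacuous) says $H$ lies in the image of ${}^\dagger\colon\calId\to\calH$, so by Proposition \ref{wesay}(1)  $H$ is $\dagger$--closed, whence $(H^\dagger)^\dagger=H^{\dagger\dagger}=H$. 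Therefore $H^\dagger\in\di(H)$ and $J=H^\dagger$.

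For the $\dagger$--closedness of $J$ and its uniqueness in $\di(H)$, I invoke the adjunction identity $H^{\dagger\dagger\dagger}=H^\dagger$ of Proposition \ref{Gal}(2)(b): this gives $J^{\dagger\dagger}=(H^\dagger)^{\dagger\dagger}=H^\dagger=J$, so $J$ is $\dagger$--closed. Any other $\dagger$--closed $K\in\di(H)$ satisfies $K=K^{\dagger\dagger}=(K^\dagger)^\dagger=H^\dagger=J$, which settles uniqueness. The only subtle point will be keeping the direction of the Galois correspondence straight and recognising that membership of some ideal in $\di(H)$ already forces $H$ to be $\dagger$--closed; once this is in hand, the identification $J=H^\dagger$ and the verification that it is the unique $\dagger$--closed element of $\di(H)$ are routine manipulations of the identities in Proposition \ref{Gal}.
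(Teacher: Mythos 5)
Your proof is correct and follows essentially the same route as the paper: show $\di(H)$ is closed under sums via join-preservation of ${}^\dagger$, identify $\sum_{I\in\di(H)}I$ with $H^\dagger$, and get closedness and uniqueness from the identities of Proposition \ref{Gal}. In fact you supply details the paper merely asserts --- in particular the justification that $\sum_{I\in\di(H)}I=H^\dagger$, which requires noting that non-emptiness of $\di(H)$ forces $H$ to be $\dagger$--closed.
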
 
 
\begin{proof}   Let $S\subseteq \di(H)$. Since   $\sum S$ is an ideal and has image $H$,  so $\sum S\in   \di(H)$.

 In particular, if $S=\di(H)$,  then $H^\dagger=\sum S$ so   $\sum S$ is $\dagger$--closed and so is the only $\dagger$--closed  ideal in $\di(H)$.
\end{proof}

\section{The ideal lattice of $\calE$}

Let $G$ be an unbounded group of length $\lambda$ and rank $\rho$ with endomorphism ring $\calE$. Recall that for $f\in\calE,\ \rank(f)$ means $\rank(Gf)$. Let $U$ be the increasing sequence of  infinite cardinals $\mu\leq\rho$ and all ideals $I$ of $\calE$ and $\mu\in U$, let 
$\calE_{\leq\mu}=\{f\in\calE\colon \rank(f)\leq \mu\}$, and $I_{\leq\mu}=I\bigcap\calE_{\leq\mu}$.

\begin{lemma}\label{frank}\begin{enumerate}\item For all ideals $I$ of $\calE$ and all infinite $\mu\leq\rho,\ I_{\leq\mu}$ is an ideal of $\calE$;
\item If $\mu_1<\mu_2<\cdots <\mu_\kappa$ is a chain of cardinals, and $I$ is an ideal of rank $\geq \mu_\kappa$ then $I_{\leq\mu_1}<I_{\leq\mu_2}<\cdots <I_{\leq\mu_\kappa}$ is a chain of distinct ideals of $\calE$.
\end{enumerate}\end{lemma}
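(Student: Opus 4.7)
The plan is to handle the two parts separately, with part (1) reducing to routine rank estimates and part (2) requiring an actual construction.

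For part (1), since $I$ is already a two-sided ideal, it suffices to verify that $\calE_{\leq\mu}$ is closed under subtraction and left/right multiplication by $\calE$, and then take the intersection. For subtraction, $G(f-g)\subseteq Gf+Gg$ gives
\[\rank(G(f-g))\leq \rank(Gf)+\rank(Gg)\leq \mu+\mu=\mu,\]
since $\mu$ is infinite. For $e\in\calE$, the inclusion $G(ef)=(Ge)f\subseteq Gf$ and the identity $G(fe)=(Gf)e$ show that neither operation can increase the rank of $f$. Hence $I_{\leq\mu}=I\cap\calE_{\leq\mu}$ is a two-sided ideal.

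For part (2), strictness amounts to producing, for each $i$, an element $h_i\in I$ with $\mu_{i-1}<\rank(h_i)\leq\mu_i$, so that $h_i\in I_{\leq\mu_i}\setminus I_{\leq\mu_{i-1}}$. The hypothesis furnishes some $f\in I$ with $\rank(f)\geq\mu_\kappa$, so the socle $(Gf)[p]$ is an $\bbF_p$--space of dimension $\geq\mu_\kappa$. The strategy is to set $h_i=f\sigma_i$ for a suitable $\sigma_i\in\calE$; then $h_i\in I$ automatically because $I$ is a right ideal, and $\rank(h_i)=\rank((Gf)\sigma_i)$ can be tuned by the choice of $\sigma_i$. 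One picks a sub-summand of the basic subgroup $B$ of $G$ of rank $\mu_i$ (available since $\rank(B)=\rho\geq\mu_\kappa$, together with property B5), and takes $\sigma_i$ to be an appropriately chosen projection onto this summand extended to an endomorphism of $G$. The extension is legitimate because every endomorphism is continuous in the $p$--adic topology and $G/B$ is divisible (property B4).

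The main obstacle is ensuring that $(Gf)\sigma_i$ actually lands in the window $(\mu_{i-1},\mu_i]$, rather than either collapsing below $\mu_{i-1}$ or overshooting $\mu_i$. The upper bound is automatic from $\rank(\sigma_i)\leq\mu_i$, but the lower bound requires exploiting that $Gf$ has rank $\geq\mu_\kappa$ to guarantee it meets the chosen summand of $G$ in a subgroup of rank strictly exceeding $\mu_{i-1}$; this is where the rank hypothesis on $I$ is really used. A cleaner alternative, if the paper's definition of \emph{rank of an ideal} already secretly asserts that $I$ contains an element of every rank up to its stated bound, collapses part (2) into the trivial observation that such an element of rank in $(\mu_{i-1},\mu_i]$ separates consecutive ideals in the chain.
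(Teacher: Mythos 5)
Part (1) of your proof is correct and is essentially the paper's argument (the paper simply asserts that $\calE_{\leq\mu}$ is closed under addition and two-sided multiplication); your version just writes out the routine rank estimates.

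Part (2), however, is a plan rather than a proof, and the plan has a hole that you yourself point out but do not fill. Everything hinges on producing, for each $i$, an element of $I$ whose rank lies in the window $(\mu_{i-1},\mu_i]$, and your candidate $h_i=f\sigma_i$ only comes with the upper bound $\rank(h_i)\leq\rank(\sigma_i)\leq\mu_i$. The lower bound $\rank((Gf)\sigma_i)>\mu_{i-1}$ is the entire content of the claim, and nothing in the proposal establishes it: if $\sigma_i$ is a projection onto an arbitrary summand $C$ of rank $\mu_i$, then $(Gf)\sigma_i$ can perfectly well collapse (e.g.\ $Gf$ could lie largely in a complement of $C$). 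Choosing $C$ adapted to $Gf$ is the real work, and it is not done. Two subsidiary problems: (i) you take $\rank(B)=\rho$, but a basic subgroup can have strictly smaller rank than $G$ (for torsion-complete $G=\ov B$ with $B$ countable one has $\rank(B)=\aleph_0$ while $\rank(G)=2^{\aleph_0}$), so the summand of rank $\mu_i$ need not exist inside $B$; (ii) the ``cleaner alternative'' is not an alternative: the paper defines rank only for endomorphisms, via $\rank(f)=\rank(Gf)$, and ``ideal of rank $\geq\mu_\kappa$'' does not assert that $I$ contains elements of every intermediate rank --- that is precisely what must be proved. For what it is worth, the paper's own proof of (2) is a two-line assertion with the same gap: it picks $f\in\calE_{\leq\mu_{i+1}}\setminus\calE_{\leq\mu_i}$ (whose existence is itself not justified) and asserts that $If$ separates $I_{\leq\mu_i}$ from $I_{\leq\mu_{i+1}}$, without showing that some $gf$ with $g\in I$ has rank exceeding $\mu_i$. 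So you have correctly located the difficulty, but as it stands your argument does not close it either.
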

\begin{proof} (1 ) Since $\calE_{\leq\mu}$ is closed under addition and left and right multiplication by elements of $\calE$ it is an ideal. Consequently, so is $I_{\leq\mu}$.

(2) 
Let $f\in\calE_{\leq \mu_{i+1}}\setminus \calE_{\leq\mu_i}$.
Then $If\leq I_{\leq\mu_{i+1}}\setminus I_{\leq\mu_i}$.
\end{proof}

 \begin{proposition}\label{clas} Let $G$ be unbounded of length $\lambda$ and let $H$ be a fully invariant subgroup. Suppose $\rank(H)=\rho$ is infinite and let $U$ be the increasing sequence  of infinite cardinals $\leq\rho$.   For all $\mu\in U$,  let ${H^\dagger}_{\leq\mu}=\{f\in H^\dagger\colon \rank Gf\leq\mu\}$. Then
\begin{enumerate}\item  For all $\mu\in U$, ${H^\dagger}_{\leq\mu}$ is an ideal of $\calE$ contained in $H^\dagger$;
\item  $\kappa<\mu$ in $U$ implies ${H^\dagger}_{\leq\rho}\leq {H^\dagger}_{\leq\mu}$;
\item ${H^\dagger}_{\leq\rho}$ is the only $\dagger$--closed ideal in $\di(H)$.
\end{enumerate}\end{proposition}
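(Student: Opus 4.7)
Parts (1) and (2) are routine. For (1), I would identify ${H^\dagger}_{\leq\mu}$ with $H^\dagger\cap\calE_{\leq\mu}$: the first factor is an ideal by Lemma \ref{straight}, the second is an ideal by Lemma \ref{frank}(1), so the intersection is an ideal, visibly contained in $H^\dagger$. For (2), the inclusion is immediate from the definition, since a smaller rank bound is stricter; the statement as printed must be ${H^\dagger}_{\leq\kappa}\leq{H^\dagger}_{\leq\mu}$.

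The crux is (3), which reduces to a clean statement after one essential observation: any $f\in H^\dagger$ satisfies $Gf\leq H$ and therefore $\rank(Gf)\leq\rank(H)=\rho$, so the rank restriction at $\mu=\rho$ is automatic and ${H^\dagger}_{\leq\rho}=H^\dagger$. The claim therefore becomes that $H^\dagger$ itself is the unique $\dagger$--closed ideal in $\di(H)$.

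To finish, I would first note that $H^\dagger$ is $\dagger$--closed by Proposition \ref{Gal}(2)(b), since $(H^\dagger)^{\dagger\dagger}=H^{\dagger\dagger\dagger}=H^\dagger$. Next, the set $\di(H)$ is non-empty precisely when $H$ is itself $\dagger$--closed (Proposition \ref{wesay}(1)); in that case $(H^\dagger)^\dagger=H^{\dagger\dagger}=H$, so $H^\dagger\in\di(H)$. Any $I\in\di(H)$ is contained in $H^\dagger$, because $f\in I$ gives $Gf\leq I^\dagger=H$ and hence $f\in H^\dagger$. Therefore $H^\dagger$ is the maximum element of $\di(H)$ and in particular coincides with $\sum_{I\in\di(H)}I$, which Lemma \ref{edagclass} identifies as the unique $\dagger$--closed ideal of $\di(H)$.

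The main obstacle is conceptual rather than technical: one must recognise that the rank parameter $\mu$ is redundant at $\mu=\rho$, because the bound $\rank(Gf)\leq\rho$ is already built into membership of $H^\dagger$. For $\mu<\rho$, the smaller ideal ${H^\dagger}_{\leq\mu}$ may still belong to $\di(H)$ (its images can sum to $H$ even when no single image has full rank), but it omits the full-rank endomorphisms needed to make it $\dagger$--closed, which is exactly why only the value $\mu=\rho$ picks out the closed representative.
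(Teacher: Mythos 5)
Your proof is correct and follows essentially the same route as the paper: the key step in (3) is the observation that the rank bound is automatic at $\mu=\rho$, so ${H^\dagger}_{\leq\rho}=H^\dagger$, after which uniqueness follows from Proposition \ref{wesay} (the paper cites exactly this). Your reading of (2) as a typo for ${H^\dagger}_{\leq\kappa}\leq{H^\dagger}_{\leq\mu}$ is also the intended statement, and your explicit handling of when $\di(H)$ is non-empty is a welcome extra precision.
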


\begin{proof} (1) and (2)  follow immediately from the definition.

(3) By definition, ${H^\dagger}_{\leq\rho}$ is the ideal of all endomorphisms mapping $G$ into $H$, that is, $H^\dagger$. By Proposition \ref{wesay}, ${H^\dagger}={H^\dagger}_{\leq\rho}$ is the unique $\dagger$--closed ideal in $\di(H)$.
\end{proof}
\begin{remarks} Unlike in the bounded case, it is an open problem whether there are ideals in $\di(H)$ other than the $I_{\leq\mu}$ for some infinite cardinal $\mu$, even in the case that $H$ is a fundamental group.   \end{remarks}

 For the statement of our final result, recall the notation:
\begin{itemize}\item $G$ is an unbounded  group of   length $\lambda$ and rank $\rho$;
\item   $\calH$ is the lattice of fully invariant subgroups of $G$ and $\ov{\calH}$ is the lattice of $\dagger$--closed elements of $\calH$;
\item   $\calId $ is the lattice of ideals of $\calE$ and $\ov{\calId}$ is the lattice of $\dagger$--closed ideals;
\item $U$ is the increasing sequence of infinite cardinals $\leq\rho$; for all $I\in\calId$ and all $\mu\leq\rho,\ I_{\leq\mu}=\{f\in I\colon \rank(Gf)\leq\mu\}$, so $I=I_\rho$;
\item  $\calI=\calI(G)$ is the lattice of $G$--admissible indicators and 
  for all $\boldsigma\in\calI,\ G(\boldsigma)=\{a\in G\colon \ind(a)\geq\boldsigma\}$ and $\rank(G(\sigma))=  \rho_\sigma$;
\end{itemize}
\begin{theorem}\label{Main} Let $G$ be an unbounded group of length $\lambda$ and rank $\rho$. 
\begin{enumerate}
\item $\ov {\calId}=\{H^\dagger\colon H\in\calH\}$  and $I\mapsto I^\dagger\colon \ov{\calId}\to \ov{\calH}$ is a lattice  isomorphism;
\item   For all $\boldsigma\in \calI(G)$, let $I(\boldsigma ) =  G(\boldsigma)^\dagger$. 
Then $\{I(\boldsigma)\colon\boldsigma\in \calI(G)\}$   is a  sublattice   of $\ov{\calId}$ and the isomorphism of $(1)$ maps  $\{I(\boldsigma)\colon\boldsigma\in \calI(G)\}$ to the lattice    of indicator subgroups of $G$.
\item For each $H\in\calH$,   let $(\mu_1<\mu_2<\cdots<\mu_\kappa=\rho_\sigma)\colon \kappa\in K$ be a sequence of  infinite cardinals $\leq\rho_H$. Then $\left(H^\dagger_{\leq\mu_k}\colon k\in K\right)$ is a chain of distinct ideals of $\calE$ contained in $H^\dagger$.

\item For all $H,\,H'\in\calH,  H^\dagger_{\leq \mu}\leq {H'}^\dagger_{\leq \mu'}$ if and only if $H\geq H'$ and $\mu\leq \mu'$;

\end{enumerate}
\end{theorem}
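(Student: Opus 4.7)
The plan is to treat the four parts of Theorem \ref{Main} in sequence, using the $\dagger$-closure theory of Section 7 and the rank-filtration Lemma \ref{frank}. Part (1) is immediate from Proposition \ref{proof}(4) combined with Proposition \ref{wesay}(2): every $\dagger$-closed ideal is $H^\dagger$ for a unique $\dagger$-closed $H\in\calH$, and the correspondence $H\mapsto H^\dagger$ is already known to be a lattice isomorphism between $\ov{\calH}$ and $\ov{\calId}$. For part (2), since $\dagger$ preserves arbitrary meets and joins (Proposition \ref{Gal}(1)), it suffices to show that $\{G(\boldsigma)\colon\boldsigma\in\calI(G)\}$ is a sublattice of $\calH$ in bijection with $\calI(G)$; I would verify $G(\boldsigma\vee\boldtau)=G(\boldsigma)\cap G(\boldtau)$ directly from the pointwise definition of indicators together with the lattice structure on $\calI(G)$ from Proposition \ref{lattice}(2), and derive the dual identity $G(\boldsigma\wedge\boldtau)=G(\boldsigma)+G(\boldtau)$ using the explicit indicator calculation in Proposition \ref{lattice}(3)(4). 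Applying $\dagger$ then transports the lattice structure to $\{I(\boldsigma)\}\subseteq\ov{\calId}$.

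For part (3), I would apply Lemma \ref{frank}(2) to $I=H^\dagger$. The point is that $\rank(H^\dagger)\geq\rho_H$, because pure-subgroup projections onto $H$ produce endomorphisms of every infinite cardinal rank up to $\rho_H$; this uses the decomposition of $G$ relative to a pure bounded subgroup of $H$ along the lines of the proof of Lemma \ref{fun}(2). A chain of infinite cardinals $\mu_1<\mu_2<\cdots<\mu_\kappa\leq\rho_H$ then yields a strictly increasing chain of distinct ideals $H^\dagger_{\leq\mu_k}$, all contained in $H^\dagger$, by direct application of the lemma, with strictness witnessed at each step by an endomorphism realising rank exactly $\mu_{k+1}$ with image in $H$.

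Part (4), the biconditional, is proved by splitting into the two implications. For the sufficiency direction, assuming $H\geq H'$ and $\mu\leq\mu'$, I would take $f\in H^\dagger_{\leq\mu}$ and verify $f\in {H'}^\dagger_{\leq\mu'}$: the rank condition $\rank(Gf)\leq\mu'$ is immediate from $\mu\leq\mu'$, while the image condition $Gf\leq H'$ must be deduced from the stated subgroup relation $H\geq H'$. For the necessity direction, I would extract $\mu\leq\mu'$ by testing the ideal containment against an endomorphism in $H^\dagger$ of rank exactly $\mu$, and then derive the subgroup relation between $H$ and $H'$ by running projections onto pure subgroups of $H$ through the containment, reading off which elements of $H$ are forced to lie in $H'$. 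The main obstacle is clearly the sufficiency direction: deducing $Gf\leq H'$ from $Gf\leq H$ does not follow from the bare set-theoretic hypothesis $H\geq H'$, so a careful interaction between the order-preserving property of $\dagger$ from Proposition \ref{Gal}(1), the rank-restricted filtration of Lemma \ref{frank}, and the $\dagger$-closure structure of $\calH$ and $\calId$ is needed to see the biconditional hold in the form stated.
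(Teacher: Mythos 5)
Your treatment of parts (1)--(3) follows essentially the same route as the paper, which simply cites Proposition \ref{wesay}(2) and Proposition \ref{proof}(4) for (1), Proposition \ref{clas} for (2), and Lemma \ref{frank} for (3); the extra detail you supply (verifying the sublattice property of the indicator subgroups, and producing endomorphisms with image in $H$ of each infinite rank up to $\rho_H$ so that the chain in (3) is strict) is detail the paper omits, and for (3) it is genuinely needed, since Lemma \ref{frank}(2) only bites once one knows $H^\dagger$ contains elements of rank exceeding each $\mu_k$. One caveat in your (2): the identity $G(\boldsigma\wedge\boldtau)=G(\boldsigma)+G(\boldtau)$ is not automatic. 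The inequality $\height\bigl(p^i(a+b)\bigr)\geq\min\{\height(p^ia),\height(p^ib)\}$ only gives $G(\boldsigma)+G(\boldtau)\leq G(\boldsigma\wedge\boldtau)$; the reverse inclusion needs an argument (or you should take the join inside $\ov{\calId}$ rather than claiming the ideal sum is already closed). The paper does not address this point either.

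The genuine gap is part (4), which you leave unproved: you correctly observe that $Gf\leq H'$ cannot be deduced from $Gf\leq H$ together with $H\geq H'$, but you then appeal vaguely to ``a careful interaction'' of earlier results instead of resolving the matter. This cannot be argued around, because the statement as printed is false: $\dagger$ is order--preserving (Proposition \ref{Gal}(1)), so taking $H=G$, $H'=pG$, $\mu=\mu'=\rho$ one has $H\geq H'$ and $\mu\leq\mu'$, yet the identity map lies in $H^\dagger_{\leq\mu}$ and not in ${H'}^\dagger_{\leq\mu'}$. The inequality in (4) must be read as $H\leq H'$ (equivalently, with the two daggered ideals interchanged). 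With that correction the ``if'' direction really is immediate from the definitions, as the paper asserts; the ``only if'' direction is still not purely formal: applying $\dagger$ to the containment gives (roughly) $H^{\dagger\dagger}\leq H'$, so one needs $H$ and $H'$ to be $\dagger$--closed (automatic for indicator subgroups by Corollary \ref{fgclosed}, and for all of $\calH$ when $G$ is simply presented by Corollary \ref{trgc}), together with an endomorphism in $H^\dagger$ of rank exactly $\mu$ to force $\mu\leq\mu'$. You should either prove this corrected version or explicitly flag the misprint; leaving the direction of the inequality unresolved is a gap in the argument.
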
 
 
\begin{proof} (1) By Proposition \ref{wesay} (2). $H^\dagger$ is a $\dagger$--closed ideal of $\calE$, and by Proposition \ref{proof} (4) the map is a lattice isomorphism.

(2) This follows from Proposition \ref{clas}.

(3) This follows from Lemma \ref{frank}.

(4) This follows immediately from the definitions.
\end{proof}

\begin{remark} If $G$ is simply presented, then we have seen that every fully invariant subgroup is $ G(\boldsigma)$ for some $\boldsigma\in\calI(G)$ and hence $\calH=\ov{\calH}$. Hence Parts (1) and (2) of Theorem \ref{Main} coalesce.  

 If $G$ is not simply presented, then $\calE$ and the   ideal lattice  $\ov {\calId}$ and  its sublattice depend on the properties of groups lying between the derived basic subgroups $B^{(\xi)}$ and their $p$--adic closures $\ov{B^{(\xi)}}$ of \S 5, about which little is currently known.

\end{remark}

\end{document}